\pgfplotsset{compat=1.18}
\setlist{leftmargin=5mm}
\newtheorem{theorem}{Theorem}
\newtheorem{cor}[theorem]{Corollary}
\newtheorem{prop}{Proposition}
\theoremstyle{definition}
\newtheorem{definition}{Definition}
\newtheorem{lemma}{Lemma}
\newtheorem{remark}{Remark}
\numberwithin{theorem}{section}
\numberwithin{equation}{section}
\numberwithin{remark}{section}
\numberwithin{prop}{section}
\numberwithin{definition}{section}
\numberwithin{lemma}{section}
\newcommand\phantomarrow[2]{
  \setbox0=\hbox{$\displaystyle #1\to$}%
  \hbox to \wd0{%
    $#2\mapstochar
     \cleaders\hbox{$\mkern-1mu\relbar\mkern-3mu$}\hfill
     \mkern-7mu\rightarrow$}%
  \,}
\newcommand*{\rom}[1]{\expandafter\@slowromancap\romannumeral #1@}
\newcommand{\R}{\mathbb{R}}
\newcommand{\K}{\mathbb{K}}
\newcommand{\N}{\mathbb{N}}
\newcommand{\T}{\mathbb{T}}
\newcommand{\I}{\mathds{1}}
\newcommand{\Pro}{\mathbb{P}}
\newcommand{\tend}[2]{\underset{#1 \to #2}{\longrightarrow}}
\newcommand{\tendf}[2]{\underset{#1 \to #2}{\rightharpoonup}}
\DeclareMathOperator{\cE}{{\cal E}}
\DeclareMathOperator{\cM}{{\cal M}}
\newcommand{\ps}[2]{\langle #1,#2\rangle}
\newcommand{\PS}[2]{\left\langle #1,#2\right\rangle}
\DeclareMathOperator{\Div}{div}
\DeclareMathOperator{\cv}{\mathfrak{c}_v}
\title{Two-phase averaged system justification \\for ideal gases without conductivity}
\date{\today}
\author{D. Bresch\thanks{Universit\'e Savoie Mont Blanc, CNRS, LAMA, 73376 Le Bourget du lac, France, \texttt{didier.bresch@univ-smb.fr}} ,
C. Burtea\thanks{Universit\'e Paris Diderot UFR Mathématiques Batiment Sophie Germain, Bureau 727 8 place Aur\'elie Nemours, 75013 Paris, \texttt{cburtea@math.univ-paris-diderot.fr}},
P. Gonin-{}-Joubert
\thanks{Universit\'e Claude Bernard Lyon 1, CNRS, \'Ecole Centrale de Lyon, INSA Lyon, Université Jean Monnet, ICJ UMR5208, 69622 Villeurbanne, France, \texttt{goninjoubert@math.univ-lyon1.fr}},
F. Lagouti\`ere\thanks{Universit\'e Claude Bernard Lyon 1, CNRS, \'Ecole Centrale de Lyon, INSA Lyon, Université Jean Monnet, ICJ UMR5208, 69622 Villeurbanne, France,
\texttt{lagoutiere@math.univ-lyon1.fr}}}
\begin{document}

\maketitle

\begin{abstract}
This article concerns the mathematical justification of an averaged system of partial differential equations governing the evolution of a two-phase mixture of compressible ideal fluids, with viscosity and without conductivity, 
in space dimension $1$ with periodic boundary conditions. The derivation is done by some homogenization procedure. 
The originality and the difficulty of the paper consists in the fact that \textit{both the density and temperature are allowed to oscillate (because of the absence of heat conduction), so that the limiting model is a six-equations, two-pressures, two-temperatures model}.
The key point is to show the strong convergence of the stress tensor in $L^2((0,T)\times (0, 1))$. 
The main difficulties are to obtain uniform estimates in spite of the presence of oscillating coefficients in the energy equation. It requires to look at solutions with low regularity for the density and the temperature.
\end{abstract}


\section{Introduction}

The mathematical justification of multiphase averaged models by homogenization in compressible fluid mechanics depends on the framework in which the type of solutions is sought. In recent works, it has been shown 
in the barotropic case 
that an appropriate framework is the one of the so-called Hoff-Desjardins solutions (see \cite{De}, \cite{Ho2}); it allows to obtain multiphase systems close to those described by formal approaches in classical references like \cite{IsHi}, \cite{DrPa}.

The procedure we adopt in the present paper is described in the barotropic case as follows:
-- In a first step, we introduce and study a model which reflects the physics at a mesoscale. This model differs from the classical one-phase one by the presence of a color function, i.e. the characteristic function of one of the phases, which labels each fluid particle and allows to prescribe different viscosity coefficients and pressure laws in the two phases. 
In this context we prove the existence of solutions à la Hoff, global in time. 
-- In a second step, we study the behavior of solutions constructed from highly oscillating initial data. We show that the limit is described by an averaged system which features a kinetic equation for the family of Young measures associated with the sequence of densities and temperatures. 
-- In the last step, it remains to prove that if the family of Young measures is initially a linear combination of two Dirac masses (related to the two densities and temperatures of the components) then this persists in time. Using such a property, it is then possible to get a system governing the two-phase compressible mixture, system similar to the Baer-Nunziato system: see \cite{BaNu} for the presentation of this mixture theory. The interested reader is referred to \cite{BrBuLa1} in the continuous setting and \cite{BrBuLa2} in the semi-discrete case, for the barotropic case. We also refer to the recent contributions \cite{Hillairet_et_al_2022a,hillairet2023analysis} for barotropic compressible fluids in the presence of surface tension between the phases.

An important remark is that if the viscosity coefficients tend to zero in the obtained two-phase compressible system, we formally get the usual two-phase averaged system with the pressure equilibrium constraint. 
Note that if we are only concerned with 
the macroscopic model without specifying the family of Young measures, it suffices to consider global weak solutions \`a la Leray (see the book \cite{NoSt} for an introduction to this type of solutions for compressible barotropic fluids): see for instance  \cite{Hi}, \cite{PlotnikovSokolowski2012}, \cite{AmZl}. 
However in the latter case we do not have enough information for the solutions in order to justify Baer-Nunziato type averaged models.

Concerning the compressible {\em heat-conducting} Navier-Stokes equations, it has been discussed for ideal gases in the recent paper \cite{Hi2} (see also \cite{AmZl96GlobalProperties} and \cite{AmZl97} for well-posedness studies of macroscopic averaged systems). Note that in dimension 1, with initial data close to an equilibrium, heat conduction and viscosity provide, in small time, $H^1$ bound in space on the temperature and the velocity field and $L^\infty$ bound occurs on the density if initially. 
It is therefore possible to follow the same procedure explained previously in the barotropic case (but for small time). The present paper concerns the compressible Navier-Stokes equations with an energy equation but {\em without heat conduction} (a physical justification of the absence of thermal conduction can be found in \cite{Kapet2001}, in which it is explained that in some situations the conductivity coefficient is much smaller than the viscosity one even if this last one is small). Assuming no thermal conduction implies that similar behaviors on the density and on the temperature may occur, namely oscillations and concentrations. Here we propose to investigate this problem, in one dimension in space, following the procedure explained here above. 

Performing a formal viscosity limit in the proposed two-phase compressible averaged system with temperature, it
is interesting to note that we get a two-phase averaged system with algebraic constraint on the entropies, see \cite{La}, \cite{Deslag} and \cite{AllaireClercKokh2002} (in these last three references the mixture models are intended to treat mixed {\em cells} at the numerical level only, whereas the components are assumed to be pure at the continuous level, justifying the term "diffuse interface"). Note that the relaxation limit we perform formally assumes regular solutions, which is not the case in these references. It would be really interesting to study the viscous shock limit: For references about this topic, see for example \cite{Bercoq2002}, \cite{Chacoq2003}, \cite{AbgrallSaurel2003}
and references therein. 

The paper will be organized as follows. First, in Section \ref{section2}, we present the modeling of the mesoscopic two-phase unmixed equations and the formal derivation of the associated two-phase averaged system. We also formally discuss the weak limit relaxation (as the viscosity coefficient tends to $0$) of the two-phase averaged system to an original algebraic equilibrium law concerning the associated pressures and an evolution equation for the specific entropies. Then, in Sections \ref{formal_main} and \ref{glob_sol}, we present the global existence and uniqueness results for the compressible mesoscopic unmixed system assuming the initial intermediate regularity $\rho_0,\theta_0\in L^\infty$, $\rho_0$ far from the vacuum, and $u_0 \in H^1$. To get such an existence and uniqueness result, we develop, in Section \ref{NRJ}, the necessary energy estimates 
and uniform bounds starting from the local existence of strong solutions in \cite{Li} by writing an equation on the stress tensor taking care of oscillating coefficients. A stability result will enable us, smoothing the initial data, to obtain an existence result for Hoff solutions from those of Li. In a fourth part, Section \ref{averaging}, we use the uniform bounds obtained previously to present some convergence properties when we consider highly oscillating data related to a color function $c_0^\varepsilon$ and therefore $\rho_0^\varepsilon$ and $\theta_0^\varepsilon$ on the initial density and temperature profile.  The original point is to prove a strong convergence in $L^2((0,T)\times\Omega)$ of the stress tensor $\sigma$ with the lower and upper bounds on the density and temperature: see Proposition \ref{sigma} that is important at this stage. This allows to perform a homogenization procedure: constructing a solution of the two-phase averaged system we have in mind, formally, we can introduce Young measures and justify the formal derivation on the basis of a uniqueness result related to the associate kinetic equation. In the last section, Section \ref{num}, 
we provide numerical illustrations for the reader's convenience, comparing the solution of the mesoscopic unmixed phases 
equations with highly oscillating data to the solution of the macroscopic averaged two-phase system.

\section{Modeling and Formal Derivation}\label{section2}

In this section we start by presenting the Navier-Stokes equations with temperature for a fluid. Then we show how to write a system with two phases separated by interfaces in a mesoscopic model valid on the whole domain, where the presence of interfaces is taken into account through a color function. We will consider in all the paper no conduction in the temperature equation.  Then introducing 
a sequence of highly oscillating unknowns depending on a parameter $\varepsilon$ and assuming some convergences
we present how to derive formally a macroscopic averaged system.

\subsection{Compressible Navier--Stokes equations with temperature}

To begin with, let us recall the Navier--Stokes equations for a Newtonian compressible non barotropic fluid, in one dimension, by adopting a Eulerian point of view (see for example \cite{NoSt} for more details). The quantities involved in describing such a fluid are the density $\rho$, the velocity $u$, the pressure $p$, the internal energy $e$ and the temperature $\theta$. We consider viscous fluids, and we denote $\mu>0$ the constant viscosity. The Cauchy stress tensor $\sigma$ takes into account the forces of pressure and friction due to viscosity, via the relationship
\begin{equation*}
    \sigma = \mu\partial_x u - p.
\end{equation*}
The total energy $E$ is the sum of kinetic $|u|^2/2$ and internal energy $e$ namely
\begin{equation*}
    E = \frac{u^2}{2} + e.
\end{equation*}
Navier-Stokes equations can then be written in a conservative form as follows
\begin{subequations}\label{NS0}
\begin{empheq}[left=\empheqlbrace]{alignat=1}
\partial_t\rho + \partial_x(\rho u)&=0
\label{cont0}\\
\partial_t(\rho u) + \partial_x(\rho u^2)&=\partial_x \sigma
\label{momts0}\\
\partial_t(\rho E)+\partial_x(\rho E u)&= \partial_x(\sigma u)
\label{energ0}.
\end{empheq}
\end{subequations}
The continuity equation \eqref{cont0} expresses the conservation of mass. The momentum equation \eqref{momts0} is simply the fundamental principle of dynamics. Finally, \eqref{energ0} expresses the total energy equation in a conservative form  which implies the conservation of total energy. Note that we do not take into account heat conductivity. To close these equations, we need some relationships between $e$, $\theta$ and $p$, $\rho$, $\theta$. We suppose that $e$ and $\theta$ are linearly dependent:
\begin{equation*}
    e = \cv \theta
\end{equation*}
where $\cv>0$ is a constant (the specific heat).
Finally, we suppose that $p$ is a function of $\rho$ and $\theta$. We will consider perfect gases, so we have 
\begin{equation*}
    p = R\rho\theta
\text{ where } 
R = (\gamma-1)\cv
\end{equation*}
with $\gamma>1$ is a constant (the heat capacity ratio).
Multiplying \eqref{momts0} by $u/2$ then substracting the result to \eqref{energ0}, we obtain an equation for the internal energy
\begin{equation*}
    \partial_t(\rho e) + \partial_x (\rho e u) = \sigma\partial_x u.
\end{equation*}

\subsection{Mesoscopic unmixed phases equations}
Let us consider a mixing of two compressible fluids, called $+$ and $-$ in a domain $\Omega$. At a given time $t>0$, fluid $+$ is present in the domain $\Omega_+(t)$, and fluid $-$ in the domain $\Omega_-(t)$, with
\begin{equation}\label{deuxdomaines}
    \overline{\Omega_+(t)}\cup \overline{\Omega_-(t)} = \Omega,\quad \Omega_+(t)\cap \Omega_-(t) = \varnothing.
\end{equation}
They satisfy the Navier-Stokes equations
\begin{subequations}\label{NS+}
\begin{empheq}[left=\empheqlbrace]{alignat=1}
\partial_t\rho_+ + \partial_x(\rho_+ u_+)&=0
\label{cont+}\\
\partial_t(\rho_+ u_+) + \partial_x(\rho_+ u_+^2)&=\partial_x \sigma_+
\label{momts+}\\
\partial_t (\rho_+E_+)+\partial_x(\rho_+E_+u_+)&=\partial_x(\sigma_+ u_+)
\label{energ+}
\end{empheq}
\end{subequations}
with
\begin{equation*}
    E_+ = u_+^2/2 + e_+,\quad \sigma_+ = \mu_+ \partial_x u_+ - p_+,\quad p_+ = R_+\rho_+\theta_+, \quad e_+ = \cv_+\theta_+
\end{equation*}
on the domain $\Omega_+$, and
\begin{subequations}\label{NS-}
\begin{empheq}[left=\empheqlbrace]{alignat=1}
\partial_t\rho_- + \partial_x(\rho_- u_-)&=0
\label{cont-}\\
\partial_t(\rho_- u_-) + \partial_x(\rho_- u_-^2)&=\partial_x \sigma_-
\label{momts-}\\
\partial_t (\rho_-E_-)+\partial_x(\rho_-E_-u_-)&=\partial_x(\sigma_- u_-)
\end{empheq}
\end{subequations}
with
\begin{equation*}
    E_- = u_-^2/2 + e_-,\quad \sigma_- = \mu_- \partial_x u_- - p_-,\quad p_- = R_-\rho_-\theta_-, \quad e_- = \cv_-\theta_-
\end{equation*}
on the domain $\Omega_-$, where $\mu_+, \mu_-, \gamma_+,\gamma_-,\cv_+,\cv_-$ are positive constants, $\gamma_+>1$  and $\gamma_->1$.

We assume now that both fluids move at the same speed $u=u_+=u_-$ (a natural assumption in order to satisfy \eqref{deuxdomaines}), and that the Cauchy stress is continuous at the interfaces, a physically reasonable assumption given the principle of reciprocal actions.
For the moment, the model is assumed to be \textit{mesoscopic}, in the sense that the torus is broken down into a succession of slices of the two different components.

\begin{remark} Later on, we will consider highly oscillating initial data related to a color function in a periodic domain $\Omega= \T$. More precisely, for a partition $\overline{\Omega^\varepsilon_{+,0}}\cup \overline{\Omega^\varepsilon_{-,0}}=\Omega$ such that the connecting components of $\Omega^\varepsilon_{\pm,0}$ have a size of the order of $\varepsilon$, we will define $c_0^\varepsilon=\I_{\Omega^\varepsilon_{+,0}}$. The approach is to make $\varepsilon$ tend towards $0$ in order to obtain a \textit{macroscopic} mixture, where we no longer consider two unmixed fluids separated by interfaces at small scale but just one as an
averaged system at large scale. If we assume $\rho_{+,0}$ and $\rho_{-,0}$ constant, then denoting 
$\rho_0^\varepsilon = \rho_{+,0} c_0^\varepsilon + \rho_{-,0} (1-c_0^\varepsilon)$, we will assume that
$\rho^\varepsilon$ weak star converges in $L^\infty(0,1)$
to $\alpha_0\rho_{+,0}  + (1-\alpha_0) \rho_{-,0} = \ps{\nu_{t,x}}{\rm Id}$  where $\nu_{t,x} = (1-\alpha_0) \delta_{\rho_{-,0}} + \alpha_0 \delta_{\rho_{+,0}}.$ 
This will be typically the initial data that we will consider with $\rho_{-,0}$ and $\rho_{+,0}$ depending on the large scale $(t,x)$.
$\square$
\end{remark}

\medskip

To begin with, we would like to write \eqref{NS+} and \eqref{NS-} as a single Navier-Stokes equation before performing an averaging procedure. The idea is a natural one: proceeding in this way, the fact that the system models a mixture will only be seen through its initial conditions. The hope is that this will lead to the study of Navier-Stokes with spatially variable viscosity and pressure law, for specific initial conditions. As in \cite{BrBuLa1}, we define $c$ as
\begin{equation*}
    \forall (t,x)\in [0,T]\times \Omega,\quad c(t,x) = \I_{\Omega_+(t)}(x).
\end{equation*}
Thanks to this function, which we will call the color function, we can easily define the density $\rho$ and the temperature $\theta$ of the mesoscopic fluid by
\begin{equation*}
    \rho = c\rho_+ + (1-c)\rho_-,\quad \theta 
    = c\theta_+ + (1-c)\theta_-.
\end{equation*}
Moreover, we define $\mu$, $\gamma$, $R$ and $\cv$ by
\begin{equation*}
    \mu(c) = c\mu_+ + (1-c)\mu_-,\quad \gamma(c)= c\gamma_+ + (1-c)\gamma_-,\quad R(c)= c R_+ + (1-c)R_-,\quad \cv(c) = c \cv_{+} + (1-c) \cv_{-}.
\end{equation*}
The equations \eqref{NS+} and \eqref{NS-} can be rewritten, thanks to the assumptions of continuity at the interfaces on $u$ and $\sigma$:
\begin{subequations}\label{NS}
\begin{empheq}[left=\empheqlbrace]{alignat=1}
\partial_t\rho + \partial_x(\rho u)&=0
\label{cont}\\
\partial_t(\rho u) + \partial_x(\rho u^2)&= \partial_x\sigma
\label{momts}\\
\partial_t (\rho E) +\partial_x (\rho E u) &=\partial_x(\sigma u) 
\label{energ}
\end{empheq}
\end{subequations}
with
\begin{equation*}
    E=u^2/2 + e,\quad \sigma = \mu(c)\partial_x u - p(c,\rho,\theta),\quad p(c,\rho,\theta) = R(c)\rho\theta,\quad e= \cv(c)\theta.
\end{equation*}
In order to close the system, we need to follow the interfaces between phases $+$ and $-$. This requires considering an equation for $c$. In our study we consider $c$ to be transported by the velocity $u$ namely
\begin{equation}\label{c}
    \partial_t c + u\partial_x c = 0
\end{equation}
which simply expresses the fact that a particle does not change its nature over time. Using the mass equation and that we will look for non vanishing density $\rho$, it may be written in a conservative form 
\begin{equation}\label{crho}
    \partial_t (\rho c)  + \partial_x(\rho u c) = 0
\end{equation}
Let us observe that from \eqref{c}, we can write
$$\partial_t (c(1-c)) + u\partial_x (c(1-c))=0$$
and therefore
\begin{equation*}
    c \in \{0,1\}
\end{equation*}
if it holds initially.
From \eqref{energ} and \eqref{momts} we deduce 
\begin{equation*}
    \partial_t(\rho e) + \partial_x(\rho e u) = \sigma\partial_x u.
\end{equation*}
Note that equations \eqref{cont}--\eqref{energ} and \eqref{crho} are considered with the initial conditions
\begin{equation}\label{inii}
\rho\vert_{t=0} = \rho_0, \qquad 
 (\rho c)\vert_{t=t_0} = \rho_0 c_0, \qquad 
 (\rho u)\vert_{t=t_0} = \rho_0 u_0, \qquad
 (\rho E)\vert_{t=t_0} = \rho_0 E_0,
\end{equation}
with $E= u^2/2+ c_v(c)\theta$.
Note that the initial conditions will be chosen later on compatible
to the mesoscopic unmixed phases system.

\subsection{Macroscopic averaged equations: Formal derivation}
\label{formderiv}

\begin{prop}\label{CrucialProp} Let us assume that $\sigma^\varepsilon$ and $u^\varepsilon$ strongly converges to $\sigma$ and $u$ in $L^2(0,T;L^2(\T))$. Moreover let us consider that there exists $\alpha_\pm \in [0,1]$, $\rho_\pm\in [\underline\rho, \overline\rho]$, $\theta_\pm\in  [\underline\theta, \overline\theta]$ some measurable functions in time and space (where $0<\underline{\rho}<\overline{\rho}$, $0<\underline{\theta}<\overline{\theta}$ are some constants) such that, for all continuous $\beta : [0,1]\times [\underline{\rho},\overline{\rho}]\times [\underline{\theta},\overline{\theta}]\rightarrow\R$, defining a set of probability measures $\nu_{t,x}^\varepsilon$ (see Theorem \ref{Theorem_main2}) such
that
$$ \langle \nu_{t,x}^\varepsilon, \beta\rangle =   \beta(c^\varepsilon,\rho^\varepsilon,\theta^\varepsilon).
$$
Assuming 
    \begin{equation*}
     \langle \nu_{t,x}^\varepsilon, \beta\rangle \tendf{\varepsilon}{0}
     \langle \nu_{t,x}, \beta\rangle
             \hbox{ in } L^2([0,T]\times \T)
      \end{equation*}
      where 
      \begin{equation*}
    \nu_{t,x} =  \alpha_+ \delta_{(1,\rho_+,\theta_+)}
    + \alpha_- \delta_{(0,\rho_-,\theta_-)},
    \end{equation*}
    namely
    \begin{equation}\label{ansatz}
        \beta(c^\varepsilon,\rho^\varepsilon,\theta^\varepsilon) \tendf{\varepsilon}{0}  \alpha_+ \beta(1,\rho_+,\theta_+) + \alpha_- \beta(0,\rho_-,\theta_-)
        \hbox{ in } L^2([0,T]\times \T),
    \end{equation}
    then
     \begin{subequations}\label{systmacro2}
    \begin{empheq}[left=\empheqlbrace]{alignat=1}
        \partial_t \alpha_\pm + u\partial_x \alpha_\pm &= \frac{\alpha_+\alpha_-}{\alpha_-\mu_+ + \alpha_+\mu_-}(\sigma_\mp-\sigma_\pm)\\
        \partial_t(\alpha_\pm \rho_\pm) + \partial_x(\alpha_\pm \rho_\pm u) &= 0\label{systmacro_m2}\\
        \partial_t(\rho u) + \partial_x(\rho u^2)&= \partial_x\sigma
        \hbox{ with } 
        \rho = \alpha_+\rho_++ \alpha_-\rho_-\\
        \label{systmacro-energies}
        \partial_t(\alpha_\pm \rho_\pm e_\pm) + \partial_x(\alpha_\pm \rho_\pm e_\pm u) &= \frac{\alpha_+\alpha_-}{\alpha_-\mu_+ + \alpha_+\mu_-}\sigma(\sigma_\mp-\sigma_\pm) + \alpha_\pm \sigma \partial_x u
    \end{empheq}
    \end{subequations}
    where $e_\pm=\cv_\pm \theta_\pm$ and 
    \begin{equation*}
        \sigma_\pm = \mu_\pm \partial_x u - p_\pm
    \end{equation*}
    with $p_\pm = R_\pm\rho_\pm \theta_\pm$, and with the homogenized stress tensor $\sigma$ given by
$$
    \sigma = \frac{\alpha_+/\mu_+}{\alpha_+/\mu_+ + \alpha_-/\mu_-}\sigma_+ + \frac{\alpha_-/\mu_-}{\alpha_+/\mu_+ + \alpha_-/\mu_-} \sigma_-.
$$

\end{prop}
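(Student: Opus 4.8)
The plan is to pass to the limit $\varepsilon\to 0$ in the mesoscopic system \eqref{NS}, \eqref{c}, \eqref{crho} term by term, exploiting systematically the hypothesis that $\sigma^\varepsilon\to\sigma$ and $u^\varepsilon\to u$ strongly in $L^2((0,T)\times\T)$ together with the Young-measure ansatz \eqref{ansatz}. Since this is a formal derivation, every passage to the limit in a product will be taken for granted once it is exhibited as a product of a weakly-$\star$ convergent factor (an average against $\nu_{t,x}$) with a strongly convergent factor ($\sigma^\varepsilon$, $u^\varepsilon$, or $(\sigma^\varepsilon)^2$). Because $c^\varepsilon\in\{0,1\}$, the two-Dirac structure of $\nu_{t,x}$ lets me evaluate $\langle\nu_{t,x},\beta\rangle$ for any relevant $\beta$: taking $\beta(c,\rho,\theta)=\rho,\ c\rho,\ (1-c)\rho$ yields $\rho^\varepsilon\rightharpoonup\rho=\alpha_+\rho_++\alpha_-\rho_-$, $c^\varepsilon\rho^\varepsilon\rightharpoonup\alpha_+\rho_+$, and $(1-c^\varepsilon)\rho^\varepsilon\rightharpoonup\alpha_-\rho_-$, while $\beta=c$ gives $c^\varepsilon\rightharpoonup\alpha_+$.

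The first substantial step is the homogenized stress law. I would invert the constitutive relation $\sigma^\varepsilon=\mu(c^\varepsilon)\partial_x u^\varepsilon-p(c^\varepsilon,\rho^\varepsilon,\theta^\varepsilon)$ into $\partial_x u^\varepsilon=\sigma^\varepsilon/\mu(c^\varepsilon)+p^\varepsilon/\mu(c^\varepsilon)$. Using $1/\mu(c^\varepsilon)\rightharpoonup\alpha_+/\mu_++\alpha_-/\mu_-$ against the strong limit of $\sigma^\varepsilon$ for the first product, and the Young measure with $\beta=p/\mu$ for the second, and noting $\partial_x u^\varepsilon\rightharpoonup\partial_x u$, I obtain $\partial_x u=\sigma(\alpha_+/\mu_++\alpha_-/\mu_-)+\alpha_+p_+/\mu_++\alpha_-p_-/\mu_-$, which I solve for $\sigma$ to read off $\mu_{eff}$ and $p_{eff}$; the convex-combination form of $\sigma$ then follows from $\sigma_\pm=\mu_\pm\partial_x u-p_\pm$ and $\alpha_++\alpha_-=1$. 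The mass and momentum equations of \eqref{systmacro2} are immediate afterwards: I pass to the limit in \eqref{cont}, in \eqref{crho} and its $(1-c^\varepsilon)$-analogue to get \eqref{systmacro_m2}, and in \eqref{momts}, each flux being a weak$\times$strong product ($\rho^\varepsilon u^\varepsilon$, $(c^\varepsilon\rho^\varepsilon)u^\varepsilon$, $(\rho^\varepsilon u^\varepsilon)u^\varepsilon$).

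For the volume-fraction equation I rewrite the transport \eqref{c} as $\partial_t c^\varepsilon+\partial_x(c^\varepsilon u^\varepsilon)=c^\varepsilon\partial_x u^\varepsilon$ and compute the limit of the right-hand side. On $\{c^\varepsilon=1\}$ one has $\partial_x u^\varepsilon=(\sigma^\varepsilon+p_+^\varepsilon)/\mu_+$, so $c^\varepsilon\partial_x u^\varepsilon=c^\varepsilon\sigma^\varepsilon/\mu_++c^\varepsilon p^\varepsilon/\mu_+$, whose limit is $\alpha_+(\sigma+p_+)/\mu_+=:\alpha_+(\partial_x u)_+$ (weak$\times$strong for the first term, Young measure $\beta=c\,p$ for the second). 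Writing the limiting equation in non-conservative form gives $\partial_t\alpha_++u\partial_x\alpha_+=\alpha_+[(\partial_x u)_+-\partial_x u]=\alpha_+(\sigma-\sigma_+)/\mu_+$, using $\mu_+\partial_x u=\sigma_++p_+$. The identity $\sigma-\sigma_+=\tfrac{\alpha_-/\mu_-}{\alpha_+/\mu_++\alpha_-/\mu_-}(\sigma_--\sigma_+)$, read off from the convex-combination form of $\sigma$, then produces exactly the relaxation coefficient $\alpha_+\alpha_-/(\alpha_-\mu_++\alpha_+\mu_-)$, and the $-$ equation follows symmetrically from the $(1-c^\varepsilon)$-transport.

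Finally, for the phase energies I multiply the internal-energy equation $\partial_t(\rho^\varepsilon e^\varepsilon)+\partial_x(\rho^\varepsilon e^\varepsilon u^\varepsilon)=\sigma^\varepsilon\partial_x u^\varepsilon$ by $c^\varepsilon$ and use the transport of $c^\varepsilon$ to obtain the closed equation $\partial_t(c^\varepsilon\rho^\varepsilon e^\varepsilon)+\partial_x(c^\varepsilon\rho^\varepsilon e^\varepsilon u^\varepsilon)=c^\varepsilon\sigma^\varepsilon\partial_x u^\varepsilon$, with $e^\varepsilon=\cv(c^\varepsilon)\theta^\varepsilon$. The left side passes to the limit as before ($\beta=c\rho\,\cv(c)\theta$). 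On $\{c^\varepsilon=1\}$ the right side equals $c^\varepsilon(\sigma^\varepsilon)^2/\mu_++c^\varepsilon\sigma^\varepsilon p^\varepsilon/\mu_+$ and converges to $\alpha_+\sigma(\sigma+p_+)/\mu_+=\alpha_+\sigma(\partial_x u)_+$; splitting $(\partial_x u)_+=\partial_x u+(\sigma-\sigma_+)/\mu_+$ and reusing the relaxation identity yields the claimed right-hand side $\tfrac{\alpha_+\alpha_-}{\alpha_-\mu_++\alpha_+\mu_-}\sigma(\sigma_--\sigma_+)+\alpha_+\sigma\partial_x u$, giving \eqref{systmacro-energies}. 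The main obstacle is precisely this energy step: the quadratic term $(\sigma^\varepsilon)^2$ forces the use of the \emph{strong} $L^2$ convergence of the stress tensor rather than mere weak convergence, which is why the strong convergence asserted in Proposition \ref{sigma} is indispensable; once it is granted, every remaining limit reduces to a product of a weakly-$\star$ convergent Young-measure average with a strongly convergent factor.
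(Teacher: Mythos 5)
Your proposal is correct and follows essentially the same route as the paper: passing to the limit term by term via weak--strong products, inverting the constitutive law \`a la Murat--Tartar to identify $\mu_{eff}$ and $p_{eff}$, and using the strong $L^2$ convergence of $\sigma^\varepsilon$ to handle the quadratic term $(\sigma^\varepsilon)^2$ in the energy equation. Your explicit multiplication of the internal-energy equation by $c^\varepsilon$ (using $D_t c^\varepsilon=0$) makes precise a step the paper leaves implicit, but the argument is otherwise identical.
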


\begin{remark}
Note that $\sigma$ may be written as $\sigma = \mu_{eff}\partial_x u - p_{eff}$, where
\begin{equation*}
    \mu_{eff} = \frac{1}{\alpha_+/\mu_++\alpha_-/\mu_-},\quad p_{eff} = \frac{\alpha_+p_+/\mu_++\alpha_-p_-/\mu_-}{\alpha_+/\mu_++\alpha_-/\mu_-}.
\end{equation*}
\end{remark}

\begin{remark}
For obtaining the set of equations at the macroscopic scale,
it is related to averaging procedure. 
To do so, we need to consider an averaging operator $E[\cdot]$ (see for instance \cite{Perrier2021} using expectation on a simple Stochastic model).  In our case, 
$E(b(c, \rho,\theta)) = \alpha_+ b(1,\rho_+,\theta_+) + \alpha_- b(0,\rho_-,\theta_-).$ 
\end{remark}

\begin{proof}
We will only give the proofs of the system "$+$". For "$-$" we just have to replace $c$ by $1-c$, remarking that, taking $\beta(c,\rho,\theta) = 1$ in \eqref{ansatz}, we have
\begin{equation*}
    \alpha_+ + \alpha_- = 1.
\end{equation*}
We will denote $\beta(c^\varepsilon,\rho^\varepsilon,\theta^\varepsilon)=:\beta^\varepsilon$ in the following.
The main idea is to choose appropriate nonlinear functions
localized around the $+$ phase playing with property that
the triple $(1,\rho_+,\theta_+)$ never cross the triple $(0,\rho_-,\theta_-)$. Of course calculations are formal in this subsection but important for readers to understand where the macroscopic averaged equations may come from.
\begin{itemize}
    \item To begin with, recall $\sigma^\varepsilon=\mu^\varepsilon\partial_x u^\varepsilon - p^\varepsilon$, we write
\begin{equation*}
    0 = \partial_t c^\varepsilon + u^\varepsilon \partial_x c^\varepsilon = \partial_t c^\varepsilon + \partial_x(c^\varepsilon u^\varepsilon) - c^\varepsilon\partial_x u^\varepsilon  = \partial_t c^\varepsilon + \partial_x(c^\varepsilon u^\varepsilon) - c^\varepsilon\frac{\sigma^\varepsilon + p^\varepsilon}{\mu^\varepsilon}
\end{equation*}
and passing to the limit, using \eqref{ansatz} with $\beta(c,\rho,\theta)=c$, $\beta(c,\rho,\theta)= c/\mu(c)$ then $\beta(c,\rho,\theta)= c\,  p(c,\rho,\theta)/\mu(c)$ and the strong convergence on $\sigma^\varepsilon$ and $u^\varepsilon$,  we get
\begin{equation*}
    \partial_t \alpha_+ + \partial_x(\alpha_+ u) - \frac{\alpha_+}{\mu_+}( \sigma + p_+) =0
\end{equation*}
which we can rewrite as 
\begin{equation*}
    \partial_t \alpha_+ + u\partial_x \alpha_+ = \frac{\alpha_+}{\mu_+}(\sigma-\sigma_+).
\end{equation*}
\item To prove \eqref{systmacro_m2} we just pass to the limit the equality
\begin{equation*}
    \partial_t (c^\varepsilon\rho^\varepsilon) + \partial_x (c^\varepsilon\rho^\varepsilon u^\varepsilon) = 0
\end{equation*}
using \eqref{ansatz} with $\beta(c,\rho,\theta)= c\rho$
and the strong convergence on $u^\varepsilon$.
\item Starting from the equality
\begin{equation*}
    \partial_t (\cv(c^\varepsilon)\rho^\varepsilon\theta^\varepsilon) + \partial_x (\cv(c^\varepsilon) \rho^\varepsilon \theta^\varepsilon u^\varepsilon) =  \sigma^\varepsilon\partial_x u^\varepsilon
\end{equation*}
we rewrite the right--hand side as
\begin{equation*}
     \sigma^\varepsilon \partial_x u^\varepsilon = \sigma^\varepsilon \frac{\sigma^\varepsilon + p^\varepsilon}{\mu^\varepsilon}
\end{equation*}
and passing to the limit using \eqref{ansatz} with $\beta(c,\rho,\theta)= \cv(c)\rho\theta$, $\beta(c,\rho,\theta)=1/\mu(c)$ then $\beta(c,\rho,\theta)= p(c,\rho,\theta)/\mu(c)$, and the strong of $\sigma^\varepsilon$ and $u^\varepsilon$, we get
\begin{equation*}
    \cv_+\partial_t(\alpha_+\rho_+\theta_+) + \cv_+ \partial_x(\alpha_+\rho_+\theta_+ u) = \frac{\alpha_+}{\mu_+} \sigma^2 + \alpha_+\frac{p_+}{\mu_+}\sigma
\end{equation*}
which we can rewrite
\begin{equation*}
    \partial_t (\alpha_+ \rho_+ e_+) 
    + \partial_x (\alpha_+\rho_+e_+ u ) = \frac{\alpha_+}{\mu_+}\sigma(\sigma-\sigma_+) + \alpha_+\sigma\partial_x u.
\end{equation*}
\end{itemize}
We obtain the following averaged system    
    \begin{subequations}\label{systmacro}
    \begin{empheq}[left=\empheqlbrace]{alignat=1}
        \partial_t \alpha_\pm + u\partial_x \alpha_\pm &= \frac{\alpha_\pm}{\mu_\pm}(\sigma-\sigma_\pm)\\
        \partial_t(\alpha_\pm \rho_\pm) + \partial_x(\alpha_\pm \rho_\pm u) &= 0\label{systmacro_m}\\
        \partial_t(\alpha_\pm \rho_\pm e_\pm) + \partial_x(\alpha_\pm \rho_\pm e_\pm u) &= \frac{\alpha_\pm}{\mu_\pm}\sigma(\sigma-\sigma_\pm) + \alpha_\pm \sigma \partial_x u 
    \end{empheq}
    \end{subequations}
Note that we can pass to the limit in the momentum equation and find directly

\begin{equation}\label{momtsformal}
\partial_t (\rho u) + \partial_x(\rho u^2)
= \partial_x \sigma    
\end{equation}

where 
$\rho = \alpha_+ \rho_+ +\alpha_- \rho_-.$
We need now to find the expression of $\sigma$.
Even if $\sigma^\varepsilon$ converges strongly to $\sigma$ in $L^2([0,T]\times\Omega)$, the expression of the limit Cauchy stress is not clear. Indeed, passing to the limit in
\begin{equation}
    \sigma^\varepsilon = \mu(c^\varepsilon)\partial_x u^\varepsilon - p(c^\varepsilon,\rho^\varepsilon,\theta^\varepsilon)
    \label{deffluxeff}
\end{equation}
is not immediate because of the nonlinearity of $p$ and the fact that $\partial_x u^\varepsilon$ converges only weakly. However, following the classic idea of homogenization observed by S. Spagnolo and F. Murat, L. Tartar (see for example \cite{Sp}, \cite{CiDo} Theorem 5.5 and \cite{MuTa}), we rewrite \eqref{deffluxeff}
\begin{equation*}
    \partial_x u^\varepsilon = \frac{1}{\mu(c^\varepsilon)}\sigma^\varepsilon + \frac{p(c^\varepsilon,\rho^\varepsilon,\theta^\varepsilon)}{\mu(c^\varepsilon)}.
    \label{defffluxeff2}
\end{equation*}
Using now the strong convergence of $\sigma^\varepsilon$, we get
\begin{equation*}
    \partial_x u = 
    \Bigl(\frac{\alpha_+}{\mu_+}+ \frac{\alpha_+}{\mu_+}  \Bigr)\sigma 
    + \Bigl(\frac{\alpha_+ p_+}{\mu_+} +   
        \frac{\alpha_- p_-}{\mu_-}\Bigr)
\end{equation*}
and therefore, with the notations of the proposition,
\begin{equation*}
    \sigma = \mu_{eff}\partial_x u - p_{eff}.
\end{equation*}

In particular, $\sigma$ can be seen as a convex combination of $\sigma_+$ and $\sigma_-$ observing that:
\begin{align*}
    \sigma &= \frac{1}{\alpha_+/\mu_+ + \alpha_-/\mu_-}(\partial_x u - \alpha_+ p_+/\mu_+ - \alpha_- p_-/\mu_-)
    \\&= \frac{1}{\alpha_+/\mu_+ + \alpha_-/\mu_-}(\alpha_+(\partial_x u - p_+/\mu_+) + \alpha_-(\partial_x u - p_-/\mu_-))
    \\&= \frac{\alpha_+/\mu_+}{\alpha_+/\mu_+ + \alpha_-/\mu_-}\sigma_+ + \frac{\alpha_-/\mu_-}{\alpha_+/\mu_+ + \alpha_-/\mu_-} \sigma_-.
\end{align*}
Thus we can rewrite \eqref{systmacro},\eqref{momtsformal} as \eqref{systmacro2}.
\end{proof}

\begin{prop}\label{Relaxationprop} Let the assumptions of
Proposition \ref{CrucialProp} be satisfied. Then the entropies $s_\pm$ given by 
\begin{equation}\label{entropies}
    s_\pm = \cv_\pm\ln(\theta_\pm) - R_\pm\ln(\rho_\pm)
\end{equation}
satisfy
\begin{align*}
    \alpha_\pm \rho_\pm \theta_\pm D_t s_\pm &= \frac{\alpha_\mp \mu_\pm\alpha_+\alpha_-}{(\alpha_-\mu_+ + \alpha_+\mu_-)^2}(\sigma_\mp - \sigma_\pm)^2 + \frac{2\alpha_+\alpha_-}{\alpha_-\mu_+ + \alpha_+\mu_-}(\sigma_\mp - \sigma_\pm)\mu_\pm\partial_x u + \alpha_\pm \mu_\pm (\partial_x u)^2
\end{align*}
\end{prop}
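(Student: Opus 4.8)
The plan is to prove the identity for the ``$+$'' phase only, the ``$-$'' case following by the symmetric exchange $+\leftrightarrow-$, and to work throughout with the material derivative $D_t=\partial_t+u\partial_x$. First I would recast the conservation-form equations of Proposition \ref{CrucialProp} in advective form. Setting $m_+=\alpha_+\rho_+$, the partial-mass equation \eqref{systmacro_m2} reads $D_t m_+=-m_+\partial_x u$. Expanding $\partial_t(m_+e_+)+\partial_x(m_+e_+u)=e_+\bigl(\partial_t m_++\partial_x(m_+u)\bigr)+m_+D_te_+$ in the energy equation \eqref{systmacro-energies} and cancelling the transport bracket via the mass equation leaves, since $e_+=\cv_+\theta_+$, the relation $m_+\cv_+D_t\theta_+=\frac{\alpha_+}{\mu_+}\sigma(\sigma-\sigma_+)+\alpha_+\sigma\partial_x u$, where I use the identity $\frac{\alpha_+\alpha_-}{\alpha_-\mu_+ + \alpha_+\mu_-}(\sigma_- - \sigma_+)=\frac{\alpha_+}{\mu_+}(\sigma-\sigma_+)$ relating the two equivalent forms of the volume-fraction equation established in the proof of Proposition \ref{CrucialProp}.

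Next I would differentiate the entropy. From $s_+=\cv_+\ln\theta_+-(\gamma_+-1)\ln\rho_+$, the chain rule together with $m_+/\rho_+=\alpha_+$ gives $\alpha_+\rho_+\theta_+D_ts_+=m_+\cv_+D_t\theta_+-(\gamma_+-1)\alpha_+\theta_+D_t\rho_+$. The first term is supplied by the energy relation above. For the second I need $D_t\rho_+$: combining $D_tm_+=\rho_+D_t\alpha_++\alpha_+D_t\rho_+$ with $D_tm_+=-m_+\partial_x u$ and the volume-fraction equation $D_t\alpha_+=\frac{\alpha_+}{\mu_+}(\sigma-\sigma_+)$ yields $D_t\rho_+=-\frac{\rho_+}{\mu_+}(\sigma-\sigma_+)-\rho_+\partial_x u$. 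Substituting this and using $(\gamma_+-1)\theta_+=p_+/\rho_+$ converts the $\rho_+$ contribution into $\frac{\alpha_+p_+}{\mu_+}(\sigma-\sigma_+)+\alpha_+p_+\partial_x u$.

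Collecting the two contributions, everything organizes around the factor $\sigma+p_+$:
\begin{equation*}
\alpha_+\rho_+\theta_+D_ts_+=\frac{\alpha_+}{\mu_+}(\sigma+p_+)(\sigma-\sigma_+)+\alpha_+(\sigma+p_+)\partial_x u=\alpha_+(\sigma+p_+)\Bigl(\tfrac{\sigma-\sigma_+}{\mu_+}+\partial_x u\Bigr).
\end{equation*}
Since $\sigma_+=\mu_+\partial_x u-p_+$, one has $\mu_+\partial_x u-\sigma_+=p_+$, so simultaneously $\sigma+p_+=(\sigma-\sigma_+)+\mu_+\partial_x u$ and the parenthesis equals $(\sigma+p_+)/\mu_+$; the right-hand side therefore collapses to the manifestly nonnegative entropy production $\frac{\alpha_+}{\mu_+}(\sigma+p_+)^2$. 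To recover the stated three-term form I would then write $\sigma+p_+=(\sigma-\sigma_+)+\mu_+\partial_x u$, expand the square, and replace $\sigma-\sigma_+$ by $\frac{\alpha_-\mu_+}{\alpha_-\mu_+ + \alpha_+\mu_-}(\sigma_- - \sigma_+)$, the latter coming from the convex-combination formula $\sigma=\frac{\alpha_+/\mu_+}{\alpha_+/\mu_++\alpha_-/\mu_-}\sigma_++\frac{\alpha_-/\mu_-}{\alpha_+/\mu_++\alpha_-/\mu_-}\sigma_-$ of Proposition \ref{CrucialProp}. The three resulting monomials in $(\sigma_--\sigma_+)^2$, $(\sigma_--\sigma_+)\mu_+\partial_x u$ and $(\partial_x u)^2$ then carry exactly the claimed coefficients.

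The computation is entirely algebraic and pointwise, so there is no analytic obstacle; the only real care is bookkeeping — keeping the two equivalent forms of the volume-fraction equation consistent and tracking the passage between $\sigma-\sigma_+$ and $\sigma_--\sigma_+$ — and verifying that the final expansion reproduces precisely the coefficients $\frac{\alpha_+\alpha_-^2\mu_+}{(\alpha_-\mu_+ + \alpha_+\mu_-)^2}$, $\frac{2\alpha_+\alpha_-\mu_+}{\alpha_-\mu_+ + \alpha_+\mu_-}$ and $\alpha_+\mu_+$. The compact identity $\alpha_+\rho_+\theta_+D_ts_+=\frac{\alpha_+}{\mu_+}(\sigma+p_+)^2$ serves as a convenient consistency check, since its nonnegativity is the expected second-law statement for the averaged model.
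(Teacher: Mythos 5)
Your proof is correct and follows essentially the same route as the paper: differentiate the entropy, use the partial-mass and volume-fraction equations to express $D_t\rho_+$, insert the advective form of the energy equation, factor out $\sigma+p_+$, and expand using $\sigma+p_+=(\sigma-\sigma_+)+\mu_+\partial_x u$ together with the convex-combination formula for $\sigma$. The only addition is your explicit compact form $\alpha_+\rho_+\theta_+D_ts_+=\frac{\alpha_+}{\mu_+}(\sigma+p_+)^2$, which the paper leaves implicit but which is a worthwhile nonnegativity check.
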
 

\begin{proof}
Starting with the formula \eqref{entropies}, we get
\begin{equation}\label{relax1}
    \alpha_\pm \rho_\pm \theta_\pm D_t s_\pm = \alpha_\pm \rho_\pm \cv_\pm D_t \theta_\pm - R_\pm \theta_\pm \alpha_\pm D_t \rho_\pm.
\end{equation}
We now recall that
\begin{equation*}
    0 = \partial_t (\alpha_\pm \rho_\pm) + \partial_x(\alpha_\pm \rho_\pm u) = \alpha_\pm D_t \rho_\pm + \rho_\pm (D_t\alpha_\pm + \alpha_\pm \partial_x u)
\end{equation*}
thus
\begin{equation*}
    \alpha_\pm D_t\rho_\pm = -\frac{\alpha_+\alpha_-}{\alpha_-\mu_+ + \alpha_+\mu_-}\rho_\pm(\sigma_\mp-\sigma_\pm) - \alpha_\pm \rho_\pm \partial_x u
\end{equation*}
and therefore, using also \eqref{systmacro-energies},  \eqref{relax1} reads
\begin{align*}
    \alpha_\pm \rho_\pm \theta_\pm D_t s_\pm &= \frac{\alpha_+\alpha_-}{\alpha_-\mu_+ + \alpha_+\mu_-}\sigma(\sigma_\mp-\sigma_\pm) + \alpha_\pm \sigma \partial_x u + R_\pm\theta_\pm \left(\frac{\alpha_+\alpha_-}{\alpha_-\mu_+ + \alpha_+\mu_-}\rho_\pm(\sigma_\mp-\sigma_\pm) + \alpha_\pm \rho_\pm \partial_x u\right)
    \\&= (\sigma + p_\pm)\left(\frac{\alpha_+\alpha_-}{\alpha_-\mu_+ + \alpha_+\mu_-}(\sigma_\mp-\sigma_\pm)+\alpha_\pm \partial_x u\right).
\end{align*}
Now we remind with the definition of $\sigma_\pm$ and $\sigma$ that
\begin{equation*}
    \sigma + p_\pm = \sigma - \sigma_\pm + \mu_\pm \partial_x u = \frac{\alpha_\mp \mu_\pm}{\alpha_+\mu_- + \alpha_-\mu_+}(\sigma_\mp - \sigma_\pm) + \mu_\pm \partial_x u
\end{equation*}
thus we conclude the proof.
\end{proof}

\bigskip

\begin{cor}
Let us now assume that $\mu_\pm$ is of order $\eta$ with $\eta>0$ a small parameter. Assume that all the unknowns may be
developed in Taylor series with respect to $\eta$. 
Then at the main order
\begin{equation*}
p_+^0 = p_-^0 \text{ and }
\alpha_+^0 \rho_+^0 \theta_+^0 D_t s_+^0 
    = \alpha_-^0 \rho_-^0 \theta_-^0 D_t s_-^0 = 0.
\end{equation*} 
\end{cor}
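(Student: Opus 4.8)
The plan is to treat the corollary as a formal matched-asymptotic computation in the small parameter $\eta$. I would write $\mu_\pm = \eta\, m_\pm$ with $m_\pm = O(1)$, and insert the Taylor ansatz $\alpha_\pm = \alpha_\pm^0 + \eta\alpha_\pm^1 + \cdots$, and likewise for $\rho_\pm,\theta_\pm,p_\pm,u$, into the system \eqref{systmacro2} and into the entropy identity of Proposition \ref{Relaxationprop}. The single algebraic fact I would isolate first is the decomposition
\begin{equation*}
\sigma_\mp - \sigma_\pm = (\mu_\mp - \mu_\pm)\partial_x u - (p_\mp - p_\pm) = \eta(m_\mp - m_\pm)\partial_x u - (p_\mp - p_\pm),
\end{equation*}
together with the observation that the denominator recurring throughout, $\alpha_-\mu_+ + \alpha_+\mu_- = \eta(\alpha_- m_+ + \alpha_+ m_-)$, is exactly of order $\eta$.

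For the pressure equilibrium I would read off the volume-fraction equation and rewrite it as
\begin{equation*}
D_t\alpha_\pm = \frac{\alpha_+\alpha_-}{\alpha_-\mu_+ + \alpha_+\mu_-}(\sigma_\mp - \sigma_\pm) = \frac{\alpha_+\alpha_-(m_\mp - m_\pm)\partial_x u}{\alpha_- m_+ + \alpha_+ m_-} - \frac{\alpha_+\alpha_-(p_\mp - p_\pm)}{\eta(\alpha_- m_+ + \alpha_+ m_-)}.
\end{equation*}
The left-hand side is $O(1)$ by the ansatz, and the first term on the right is $O(1)$; hence the potentially singular $\eta^{-1}$ contribution must stay bounded, which forces $p_\mp - p_\pm = O(\eta)$, i.e.\ $p_+^0 = p_-^0$ at leading order. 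Here I would use that we work in a genuinely mixed region, so that $\alpha_+^0\alpha_-^0 \neq 0$ and the prefactor does not degenerate.

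For the entropy statement I would feed $p_+^0 = p_-^0$ back into the stress difference to get $\sigma_\mp - \sigma_\pm = O(\eta)$, and then count powers of $\eta$ in the three terms on the right-hand side of Proposition \ref{Relaxationprop}. The first term $\mu_\pm(\alpha_-\mu_+ + \alpha_+\mu_-)^{-2}(\sigma_\mp-\sigma_\pm)^2$ is $O(\eta)\,O(\eta^{-2})\,O(\eta^2) = O(\eta)$; the second, $(\alpha_-\mu_+ + \alpha_+\mu_-)^{-1}(\sigma_\mp-\sigma_\pm)\mu_\pm\partial_x u$, is $O(\eta^{-1})\,O(\eta)\,O(\eta) = O(\eta)$; and the third, $\alpha_\pm\mu_\pm(\partial_x u)^2$, is $O(\eta)$. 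Since every term is $O(\eta)$, the left-hand side $\alpha_\pm\rho_\pm\theta_\pm D_t s_\pm$ vanishes at order $\eta^0$, giving $\alpha_\pm^0\rho_\pm^0\theta_\pm^0 D_t s_\pm^0 = 0$.

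The only delicate point, and the step I would flag as the real content rather than bookkeeping, is the justification of the $\eta^{-1}$ cancellation in the volume-fraction equation: it is a solvability/consistency condition on the asymptotic expansion, valid only where $\alpha_+^0\alpha_-^0\neq 0$ (away from pure phases), and it is precisely what promotes the formal relaxation into the pressure-equilibrium constraint. Everything else is order counting, so I would keep those computations terse and make the mixed-region hypothesis explicit.
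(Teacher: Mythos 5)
Your proposal is correct, and the second half (the order count showing every term on the right-hand side of Proposition \ref{Relaxationprop} is $O(\eta)$ once $\sigma_\mp-\sigma_\pm=O(\eta)$) is exactly the paper's argument. Where you diverge is in how you obtain the pressure equilibrium: you extract it from the $\eta^{-1}$ singularity in the volume-fraction equation $D_t\alpha_\pm=\frac{\alpha_+\alpha_-}{\alpha_-\mu_++\alpha_+\mu_-}(\sigma_\mp-\sigma_\pm)$, using that $D_t\alpha_\pm$ must stay $O(1)$, which forces $p_\mp-p_\pm=O(\eta)$ linearly. The paper instead stays entirely inside the entropy identity: it observes that the most singular term there is $\frac{\alpha_\mp\mu_\pm\alpha_+\alpha_-}{(\alpha_-\mu_++\alpha_+\mu_-)^2}(\sigma_\mp-\sigma_\pm)^2=O(\eta^{-1})(\sigma_\mp-\sigma_\pm)^2$, so boundedness of the left-hand side forces $(\sigma_-^0-\sigma_+^0)^2=0$, hence $\sigma_-^0=\sigma_+^0$, which reads $p_-^0=p_+^0$ since $\mu_\pm\partial_x u=O(\eta)$. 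Both are legitimate solvability conditions on the formal expansion and both secretly require the mixed-region nondegeneracy $\alpha_+^0\alpha_-^0\neq 0$; you make that hypothesis explicit, which is a genuine improvement over the paper's silence on the point, and your linear derivation avoids having to argue from the vanishing of a square. The trade-off is that the paper's route is self-contained within Proposition \ref{Relaxationprop} (one equation yields both conclusions), whereas yours invokes the volume-fraction equation of \eqref{systmacro2} as an extra input.
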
 

\begin{proof} 

Let us start with the entropies equations
\eqref{entropies}. Looking at order $-1$ and order $0$ with respect to $\eta$, we can prove the corollary. 
First observe that the more singular quantity with respect to the viscosity coefficients is the first term in the right-hand side
of \eqref{entropies}. Thus we get
$$\sigma_-^0 = \sigma_+^0$$
which reads $p_-^0=p_+^0.$ 
Then looking at order $0$, we conclude since 
$$(\sigma_--\sigma_+)^2 = O(\eta^2).$$
\end{proof}

\begin{remark} We can write equations $\alpha^0_\pm\rho^0_\pm\theta^0_\pm D_t s^0_\pm=0$ as $D_t s^0_\pm=0$ assuming $\alpha^0_\pm \rho^0_\pm \theta^0_\pm$ does not vanish, see \cite{La}.
\end{remark} 
\begin{remark}
In the barotropic case this problem has been addressed in the recent paper \cite{BurteaCrin-BaratTan2023} for the case of regular solutions close to equilibrium. 
\end{remark}


\section{Formal statement of the main results}\label{formal_main}

We first define the functional framework in which we will consider the solutions of equations \eqref{NS}-\eqref{c} on a time interval $(0,T)$ fixed (in a periodic setting in space chosen to be $(0,1)$ later-on) which can be called the Hoff framework. To construct such solutions,
we regularize initial data (with a family of approximations of the identity indexed by the parameter $n$) and use the existence of solutions provided by \cite{Li} to have in hand a sequence of regular solutions. We can prove that such a sequence satisfies a certain number of estimates uniformly with respect to the regularization. 
These estimates that define the class of solutions usually called \`a la Hoff. This class of solutions is somehow intermediate between the class of bounded energy weak solutions (see D. Bresch, P-E. Jabin, F. Wang \cite{BrJaWu}, and references cited therein) and regular solutions as constructed by Kazhikhov and Shelukhin \cite{KaSh}, but here in the case without heat-conductivity. In particular it allows to consider discontinuous densities but the velocity is sufficiently regular so as to trace the evolution of these discontinuities. We refer to \cite{Hoff1986,Hoff1987,Ho1,Ho2} for Hoff's earlier work on compressible fluids one--phase flows.
Of course, it suffices then to pass to the limit to obtain the desired solution (weak nonlinear stability procedure). We then prove uniqueness of the limit in the Hoff class of solutions to get a convergence on the whole sequence. It should be noted that while the results given in the following are valid in the presence of a color function, they are interesting in their own right in the study of Navier-Stokes with temperature and without thermal conductivity.

Since $\mu$ is not a constant but a discontinuous function, the simplest way to give meaning to "weak solution" of \eqref{cont}--\eqref{energ} and \eqref{crho} with initial conditions \eqref{inii} is the following:

\begin{definition}\label{Def1}
The quadruplet $(c,\rho,\theta,u)\in L^\infty(0,T,L^\infty(\T))\times L^\infty(0,T,L^1(\T)) ^2\times L^1(0,T,W^{1,1}(\T))$ is called "weak solution" of \eqref{cont}--\eqref{energ} and \eqref{crho} if \eqref{cont}--\eqref{energ} and \eqref{crho} are satisfied in a distribution sense and the initial data \eqref{inii} in a weak sense. 
\end{definition}


\begin{remark}
Concerning two-phase mixtures, a relevant subclass of initial data is given by
\begin{equation*}
    \rho_0^\varepsilon = c_0^\varepsilon\rho_{+,0} + (1-c_0^\varepsilon)\rho_{-,0}
\end{equation*}
with $c_0^\varepsilon$ highly oscillating and $\rho_{\pm,0}$ non oscillating functions. Thus $\rho_0^\varepsilon$ will start to oscillate faster and faster as $\varepsilon$ tends towards $0$ in the averaging process. So we cannot ask more than a uniform bound in $L^\infty([0,T]\times \T)$ for $\rho$. That's the same for the temperature $\theta$ because we do not take into account heat conductivity. Since the velocity $u$ is assumed to be common to both fluids, it's legitimate to take $u_0\in H^1(\T)$, then we can expect to obtain greater regularity on $u$. 
\end{remark}

Let's forget the color function for a moment. Navier-Stokes solutions $(\rho,\theta,u)$ in $L^\infty\times L^\infty\times H^1$ as described above are of intermediate regularity between strong solutions (where $\rho, \theta, u$ are regular) and Leray solutions (where $\rho, \theta, u$ are not very regular). Navier-Stokes equations have been studied in this framework, in the barotropic case and in $2$ and $3$ dimensions by D. Hoff \cite{Ho1} for small data and by B. Desjardins \cite{De} in short time. D. Hoff also deals with the 1D case in \cite{Ho2}, globally in time without size restriction on the initial data. His results are extended in \cite{BrBuLa1} and \cite{BrBuLa2} taking into account the color function. In the next section, we will give some energy estimates for our problem, essentially by mixing and improving some results from \cite{Li}, \cite{BrBuLa1} and \cite{BrBuLa2}.

\ 

We introduce here the framework of solutions "à la Hoff" for Navier-Stokes with temperature and without conductivity, with transported coefficients.

\begin{definition}\label{defhoff} Let $(c_0,\rho_0,\theta_0,u_0)\in L^\infty(\T)^3\times H^1(\T)$. Then $(c,\rho,\theta,u)$ is called a solution "à la Hoff" on $[0,T]$ of \eqref{cont}--\eqref{energ} and \eqref{crho}  with initial conditions  \eqref{inii} if there exists positive constants 
\newline
$\underline{\rho},\overline{\rho},\underline{\theta},\overline{\theta},C_1,C_2,C_3,C_4,C_5,C_6$, which may depend on $T$, such that
    \begin{itemize}
        \item $(c,\rho,\theta,u)$ is a weak solution of \eqref{cont}--\eqref{energ} and \eqref{crho} with initial conditions \eqref{inii} in the sense of Definition \ref{Def1}
        \item $(c,\rho,\theta,u)$ verifies the following bounds:
        \begin{equation}\label{D_UL_rho}
            \underline{\rho}\leq \rho \leq \overline{\rho} \text{ a.e. in }[0,T]\times \T,
        \end{equation}
        \begin{equation}\label{D_UL_theta}
            \underline{\theta}\leq \theta\leq \overline{\theta} \text{ a.e. in }[0,T]\times\T,
        \end{equation}
        \begin{equation}\label{D_A1}
            \int_0^1\sigma^2 + \int_0^t\int_0^1 (\partial_x \sigma)^2\leq C_1,\quad \int_0^T\Vert\sigma\Vert_\infty^2\leq C_2,
        \end{equation}
        \begin{equation}\label{D_A1inf}
            \sup_{[0,T]}\int_0^1(\partial_x u)^2\leq C_3, \quad \sup_{[0,T]}\Vert u\Vert_\infty^2\leq C_3,\quad \int_0^T\Vert\partial_x u\Vert_\infty^2 \leq C_4,\quad \int_0^T\int_0^1(\partial_t u)^2\leq C_5,
        \end{equation}
        \begin{equation}
            \sup_{0\leq t \leq T}\min(1,t)\int_0^1 (\partial_x\sigma)^2 + \int_0^T\min(1,s) \int_0^1 \dot{\sigma}^2\leq C_6.
            \label{secondhoff}
        \end{equation}
    \end{itemize}
    If $T$ may be chosen arbitrarily large we will use the term global solution "à la Hoff". A sequence of solutions "à la Hoff" indexed by $n$ is called "uniformly bounded" with respect to $n$ if it checks all previous bounds uniformly. 
\end{definition}

Our first main result is a global existence and uniqueness result that reads

\begin{theorem}
     For any $(c_0,\rho_0,\theta_0,u_0)\in L^\infty(\T)^3\times H^1(\T)$ 
 such that there exist $\underline{\rho_0},\overline{\rho_0},\underline{\theta_0},\overline{\theta_0},C_0>0$ 
 with
\begin{equation}\label{initialcond}
    c_0\in [0,1],\quad \underline{\rho_0}\leq \rho_0 \leq \overline{\rho_0},\quad \underline{\theta_0}\leq \theta_0 \leq \overline{\theta_0},\quad \Vert u_0\Vert_{H^1}\leq C_0,
\end{equation}   
there exists a unique global solution "à la Hoff" of \eqref{cont}--\eqref{energ} and \eqref{crho}  with initial conditions  \eqref{inii}. Moreover, the bounds in \eqref{D_UL_rho}--\eqref{secondhoff} can be chosen so as to depend only on $\mu_\pm,\gamma_\pm,\cv_\pm, T$ and $\underline{\rho_0},\overline{\rho_0},\underline{\theta_0},\overline{\theta_0},C_0$.
\end{theorem}

The idea of the proof is the following: Jinkai Li proved an existence result in \cite{Li} for smooth initial data (without  color function, but it is clear that this proof can be adapted in the present case for a smooth color function). Thus, there is existence for regularized initial data, then we use the stability result Theorem \ref{stability} to construct a unique solution à la Hoff.

\ 

The next result concerns the limiting behavior of a sequence of solutions
"\`{a}~ la Hoff" emanating from a sequence of uniformly bounded initial data.
More precisely, for all $\varepsilon>0$ consider%
\begin{equation*}
(c_{0}^{\varepsilon},\rho_{0}^{\varepsilon},\theta_{0}^{\varepsilon}%
,u_{0}^{\varepsilon})\in L^{\infty}(\T)^{3}\times H^{1}(\T) \label{ini_1}%
\end{equation*}
verifying \eqref{initialcond} with some bounds uniform in $\varepsilon$, and suppose that
\begin{equation}
\left\{
\begin{array}
[c]{l}%
c_{0}^{\varepsilon},\rho_{0}^{\varepsilon},\theta_{0}^{\varepsilon
}\tendf{\varepsilon}{0}c_{0},\rho_{0},\theta_{0}\text{ weakly--}\star\text{ in }L^{\infty}(\T),\\
u_{0}^{\varepsilon}\tendf{\varepsilon}{0}u_{0}\text{ in }H^{1}(\T).
\end{array}
\right.  \label{ini_2}%
\end{equation}
In particular, there could be, and for the situation that is relevant to
mixtures of fluids there are, points $x\in\T$ where the associated sequence of
values $\left(  c_{0}^{\varepsilon}(x),\rho_{0}^{\varepsilon}(x),\theta
_{0}^{\varepsilon}(x)\right)  $ oscillates around $\left(  c_{0}(x),\rho
_{0}(x),\theta_{0}(x)\right)  $. Fix $T>0$ and for all $\varepsilon>0,$
consider $(c^{\varepsilon},\rho^{\varepsilon},\theta^{\varepsilon
},u^{\varepsilon})$ the solution "\`{a}~la Hoff" of \eqref{NS} with initial
data $(c_{0}^{\varepsilon},\rho_{0}^{\varepsilon},\theta_{0}^{\varepsilon
},u_{0}^{\varepsilon})$. The oscillations initially present in the initial
data are then propagated in time (no smoothing effect is to be expected since
the subsystem of PDEs verified by $c^{\varepsilon},\rho^{\varepsilon}$ and
$\theta^{\varepsilon}$ is hyperbolic) which creates an instability: even if
the sequence of solutions $(c^{\varepsilon},\rho^{\varepsilon},\theta
^{\varepsilon},u^{\varepsilon})$ converges weakly-$\star$ to some limiting
functions $(c,\rho,\theta,u)$ the later are not, in general, weak solutions for
\eqref{NS} with initial data $\left(  c_{0},\rho_{0},\theta_{0},u_{0}\right)
$. Of course, this is due to the presence of nonlinearities which, roughly
speaking, do not commute with weak convergence. Nevertheless, we can still
describe the limiting system by introducing the Young measures associated to
$(c^{\varepsilon},\rho^{\varepsilon},\theta^{\varepsilon})$. This mathematical
entity contains all the information regarding the oscillations of the initial
sequence. In the context of 1D compressible fluids it has been observed since the $90$s (see D. Serre \cite{Serre1991}, W. E \cite{Weinan1992}, Amosov and Zlotnik \cite{AmosovZlotnik1996b} for
barotropic flows or Amosov and Zlotnik \cite{Amosov2001}, M. Hillairet \cite{Hi2} for heat
conducting fluids) that an evolution equation can be written for this object. 

We recall that we are concerned with heat non-conducting fluids and as such
the temperature is oscillating. It is this aspect that is crucial in order to obtain a "two-temperature"-system in the limit. We formalize the previous discussion in the following:
\begin{theorem}
\label{Theorem_main1}Let $T>0$ be fixed. Consider a sequence of uniformly
bounded initial conditions $(c_{0}^{\varepsilon},\rho_{0}^{\varepsilon}%
,\theta_{0}^{\varepsilon},u_{0}^{\varepsilon})$, $(c_{0},\rho_{0},\theta
_{0},u_{0})\in L^{\infty}(\T)^{3}\times H^{1}(\T)$ verifying \eqref{initialcond} with bounds uniform in $\varepsilon$ and $\eqref{ini_2}  $. For all $\varepsilon>0,$ consider
$(c^{\varepsilon},\rho^{\varepsilon},\theta^{\varepsilon},u^{\varepsilon})$
the solution "\`{a}~ la Hoff" of of \eqref{cont}--\eqref{energ} and \eqref{crho} with initial data $(c_{0}%
^{\varepsilon},\rho_{0}^{\varepsilon},\theta_{0}^{\varepsilon},u_{0}%
^{\varepsilon})$. Then, there exists some $(c,\rho,\theta)\in L^{\infty
}([0,T]\times\T)^{3},$ $\sigma\in L^{\infty}(0,T,L^{2}(\T))$ and $u\in
H^{1}([0,T]\times\T)$ such that%
\begin{align}
&  c^{\varepsilon},\rho^{\varepsilon},\theta^{\varepsilon}%
\tendf{\varepsilon}{0}c,\rho,\theta\text{ weakly}-\star\text{ in }L^{\infty
}([0,T]\times\T),\label{faibleconv}\\
&  u^{\varepsilon}\tendf{\varepsilon}{0}u\text{ weakly in }H^{1}%
([0,T]\times\T),\label{faibleconv2}\\
&  u^{\varepsilon}\tend{\varepsilon}{0}u\text{ in }L^{2}(0,T,L^{2}%
(\T)),\label{forteconv}\\
&  \sigma^{\varepsilon}\tend{\varepsilon}{0}\sigma\text{ in }L^{2}%
(0,T,L^{2}(\T)).\label{strongconvsigmaeps}%
\end{align}
Moreover there exists constants $\underline{\rho},\overline{\rho}%
,\underline{\theta},\overline{\theta}>0$ such that\footnote{We omit the time
dependency of the constants in order to ease the reading. It is clear that a
global version of this result can be formulated albeit some technical
details.}:%
\begin{equation}
\forall(t,x)\in\lbrack0,T]\times\T,\quad0\leq c(t,x)\leq1,\quad\underline
{\rho}\leq\rho(t,x)\leq\overline{\rho},\quad\underline{\theta}\leq
\theta(t,x)\leq\overline{\theta}.\label{ineglim}%
\end{equation}
Finally, there exists\footnote{$\left(  L^{1}\left(  \left(  0,T\right)
\times\mathbb{T}\right)  ,C\left(  K\right)  \right)  ^{\prime}$ represents
the dual of $L^{1}(\left(  0,T\right)  \times\mathbb{T},C\left(  K\right)
).$} 
$\nu \in \left(  L^{1}\left(
\left(  0,T\right)  \times\mathbb{T}\right),C\left(  K\right)  \right)
^{\prime}$ such that%
\begin{equation} 
\left\{
\begin{array}
[c]{l}%
\partial_{t}\rho+\partial_{x}\left(  \rho u\right)  =0,\\
\partial_{t}c+\partial_{x}\left(  cu\right)  =\left\langle \nu,\dfrac{c}%
{\mu\left(  c\right)  }\right\rangle \sigma+\left\langle \nu,\dfrac{cp\left(
c,\rho,\theta\right)  }{\mu\left(  c\right)  }\right\rangle,\\
\partial_{t}(\rho u)+\partial_{x}(\rho u^{2})-\partial_{x}\sigma=0,\\
\partial_{t}\left(  \rho\left\langle \nu,\cv\theta\right\rangle \right)
+\partial_{x}\left(  \left\langle \nu,\rho\cv\theta\right\rangle u\right)
=\sigma\partial_{x}u,\\
\left\langle \nu,\dfrac{1}{\mu\left(  c\right)  }\right\rangle \sigma
=\partial_{x}u-\left\langle \nu,\dfrac{cp\left(  c,\rho,\theta\right)  }%
{\mu\left(  c\right)  }\right\rangle ,
\end{array}
\right.  \label{system_general}%
\end{equation}
\end{theorem}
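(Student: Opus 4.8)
The plan is to combine the uniform \`a la Hoff bounds with two compactness arguments and the fundamental theorem of Young measures. First I would observe that, the initial data being uniformly bounded, all the quantities controlled in \eqref{D_UL_rho}--\eqref{secondhoff} are bounded uniformly in $\varepsilon$. Banach--Alaoglu then gives, along a subsequence, the weak-$\ast$ limits \eqref{faibleconv} together with the uniform bounds \eqref{ineglim}, the weak $H^1$ limit \eqref{faibleconv2}, and a weak limit $\sigma^\varepsilon\rightharpoonup\sigma$ in $L^\infty(0,T;L^2(\T))\cap L^2(0,T;H^1(\T))$. Since $(c^\varepsilon,\rho^\varepsilon,\theta^\varepsilon)$ takes values in the fixed compact set $K=[0,1]\times[\underline{\rho},\overline{\rho}]\times[\underline{\theta},\overline{\theta}]$, the fundamental theorem of Young measures furnishes, after a further extraction, a parametrized probability measure $\nu\in(L^1((0,T)\times\T,C(K)))'$ such that $\beta(c^\varepsilon,\rho^\varepsilon,\theta^\varepsilon)\rightharpoonup\langle\nu,\beta\rangle$ weakly-$\ast$ in $L^\infty([0,T]\times\T)$ for every $\beta\in C(K)$.

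The heart of the matter, and the main obstacle, is to upgrade the weak convergences of $u^\varepsilon$ and $\sigma^\varepsilon$ to the strong $L^2$ convergences \eqref{forteconv} and \eqref{strongconvsigmaeps}, which is the content of Proposition \ref{sigma}. For $u^\varepsilon$, the uniform bounds of Definition \ref{defhoff} (together with the energy estimate) give $u^\varepsilon$ bounded in $L^\infty(0,T;H^1(\T))$, hence in $L^\infty([0,T]\times\T)$; writing the momentum equation in nonconservative form $\rho^\varepsilon\dot u^\varepsilon=\partial_x\sigma^\varepsilon$ (with $\dot u=\partial_t u+u\partial_x u$) and using \eqref{D_A1} together with the lower bound on $\rho^\varepsilon$ shows that $\dot u^\varepsilon$ is bounded in $L^2((0,T)\times\T)$; since $u^\varepsilon\partial_x u^\varepsilon$ is then bounded in $L^\infty(0,T;L^2)$, the time derivative $\partial_t u^\varepsilon$ is bounded in $L^2((0,T)\times\T)$, and Aubin--Lions with $H^1(\T)\hookrightarrow\hookrightarrow L^2(\T)$ yields \eqref{forteconv}. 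For $\sigma^\varepsilon$ I would exploit the weighted material-derivative bound \eqref{secondhoff}: on any slab $[\tau,T]\times\T$ with $\tau>0$ the material derivative $\dot\sigma^\varepsilon=\partial_t\sigma^\varepsilon+u^\varepsilon\partial_x\sigma^\varepsilon$ is bounded in $L^2$, and because $u^\varepsilon$ is bounded in $L^\infty$ and $\partial_x\sigma^\varepsilon$ in $L^2$, this forces $\partial_t\sigma^\varepsilon$ to be bounded in $L^2([\tau,T]\times\T)$. Together with the uniform bound of $\sigma^\varepsilon$ in $L^2(0,T;H^1(\T))$, Aubin--Lions gives compactness of $\sigma^\varepsilon$ in $L^2([\tau,T]\times\T)$; the singularity at $t=0$ is harmless, since the uniform $L^\infty(0,T;L^2)$ bound yields $\int_0^\tau\int_0^1(\sigma^\varepsilon)^2\le\tau C_5$, so a diagonal extraction over $\tau\to0$ produces \eqref{strongconvsigmaeps}.

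With the strong convergences of $\sigma^\varepsilon$ and $u^\varepsilon$ in hand, I would pass to the limit in \eqref{NS} and \eqref{crho} term by term. The decisive mechanism is that every surviving nonlinearity is a product of a bounded observable $\beta(c^\varepsilon,\rho^\varepsilon,\theta^\varepsilon)$, converging weakly-$\ast$ to $\langle\nu,\beta\rangle$ in $L^\infty$, with one of the strongly convergent factors $u^\varepsilon$, $(u^\varepsilon)^2$ or $\sigma^\varepsilon$; a weak-$\ast$-times-strong product converges to the expected limit. Thus the continuity equation passes directly through $\rho^\varepsilon u^\varepsilon\rightharpoonup\rho u$, and the momentum equation through $\rho^\varepsilon(u^\varepsilon)^2\rightharpoonup\rho u^2$ and $\partial_x\sigma^\varepsilon\rightharpoonup\partial_x\sigma$, giving the first and third lines of \eqref{system_general}. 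The transport equation, rewritten as $\partial_t c^\varepsilon+\partial_x(c^\varepsilon u^\varepsilon)=\tfrac{c^\varepsilon}{\mu(c^\varepsilon)}\sigma^\varepsilon+\tfrac{c^\varepsilon p(c^\varepsilon,\rho^\varepsilon,\theta^\varepsilon)}{\mu(c^\varepsilon)}$, yields the second line via the observables $\beta=c/\mu(c)$ and $\beta=cp/\mu(c)$. The internal-energy equation $\partial_t(\rho^\varepsilon\cv(c^\varepsilon)\theta^\varepsilon)+\partial_x(\rho^\varepsilon\cv(c^\varepsilon)\theta^\varepsilon u^\varepsilon)=\sigma^\varepsilon\partial_x u^\varepsilon$ is treated with the observable $\beta=\rho\cv(c)\theta$ for the conservative part, while its source converges to $\sigma\partial_x u$ as a strong-times-weak $L^2$ product, giving the fourth line. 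Finally the effective constitutive relation (fifth line) follows from $\partial_x u^\varepsilon=\tfrac{1}{\mu(c^\varepsilon)}\sigma^\varepsilon+\tfrac{p(c^\varepsilon,\rho^\varepsilon,\theta^\varepsilon)}{\mu(c^\varepsilon)}$ by the same weak-$\ast$-times-strong argument, with the observables $\beta=1/\mu(c)$ and $\beta=p/\mu(c)$.

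I expect the strong convergence of $\sigma^\varepsilon$ to be the genuine difficulty. A priori $\sigma^\varepsilon=\mu(c^\varepsilon)\partial_x u^\varepsilon-p(c^\varepsilon,\rho^\varepsilon,\theta^\varepsilon)$ couples the only weakly converging factor $\partial_x u^\varepsilon$ with the oscillating coefficients $\mu(c^\varepsilon)$ and $p(c^\varepsilon,\rho^\varepsilon,\theta^\varepsilon)$, so without the compactness coming from \eqref{secondhoff} one would be left, in every source term, with products of two merely weakly convergent sequences, which do not pass to the limit. It is precisely the strong convergence of $\sigma^\varepsilon$ that makes the weak-$\ast$-times-strong structure of all the nonlinear terms exploitable, and hence closes the derivation of \eqref{system_general}.
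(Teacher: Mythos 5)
Your proposal is correct and follows essentially the same route as the paper: uniform Hoff bounds plus Banach--Alaoglu and Young measures for the weak limits, Aubin--Lions applied to the momentum equation for the strong $L^2$ convergence of $u^{\varepsilon}$, the weighted estimate \eqref{secondhoff} combined with Aubin--Lions and a splitting of the time integral near $t=0$ for the strong convergence of $\sigma^{\varepsilon}$, and finally weak-$\star$-times-strong products to close \eqref{system_general}. The only cosmetic difference is that the paper applies Aubin--Lions directly to $\kappa(t)\sigma^{\varepsilon}$ and then removes the weight, whereas you restrict to slabs $[\tau,T]\times\T$ and let $\tau\to 0$ — the two devices are interchangeable.
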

Moreover we justify that $\nu$ satisfies a kinetic equation which 
with \eqref{system_general} form a closed system. Namely we prove the following theorem
\begin{theorem}\label{Theorem_main2}
Let $K=[0,1]\times\lbrack\underline{\rho},\overline{\rho}]\times
\lbrack\underline{\theta},\overline{\theta}]$. For $\varepsilon>0$, we define
as%
\begin{equation*}
\left\{
\begin{array}
[c]{l}%
\nu^{\varepsilon}:[0,T]\times\T\rightarrow\Pro(K),(t,x)\mapsto\nu
_{t,x}^{\varepsilon}\\
\forall(t,x)\in\lbrack0,T]\times\T,\quad\forall\beta\in C(K),\quad\left\langle
{\nu_{t,x}^{\varepsilon},}{\beta}\right\rangle =\beta(c^{\varepsilon
}(t,x),\rho^{\varepsilon}(t,x),\theta^{\varepsilon}(t,x))\\
\forall x\in\T,\quad\nu_{0,x}^{\varepsilon}=c_{0}^{\varepsilon}(x)\delta
_{(1,\rho_{+,0}(x),\theta_{+,0}(x))}+(1-c_{0}^{\varepsilon}(x))\delta
_{(0,\rho_{-,0}(x),\theta_{-,0}(x))}.
\end{array}
\right.  \label{Young}%
\end{equation*}
By slightly abusing the notations, we will also denote by $\nu^{\varepsilon
}\in C(0,T,\cM(\T\times K))$ the application
\[
\left\{
\begin{array}
[c]{l}%
\nu^{\varepsilon}:\left[  0,T\right]  \rightarrow\mathcal{M}\left(
\mathbb{T}\times K\right)  \\
\forall(\beta,\psi)\in C(K)\times C\left(  \mathbb{T}\right), \left\langle \nu^{\varepsilon},\quad\beta\times\psi\right\rangle :=\int_{0}^{1}%
\beta\left(  \rho^{\varepsilon}\left(  t,x\right)  ,c^{\varepsilon}\left(
t,x\right)  ,\theta^{\varepsilon}\left(  t,x\right)  \right)  \psi\left(
x\right)  dx
\end{array}
\right.
\]
Then, there exists $\nu\in C(0,T,\cM(\T\times K))\cap\left(  L^{1}\left(
\left(  0,T\right)  \times\mathbb{T}\right)  ,C\left(  K\right)  \right)
^{\prime}$ such that
\[
\nu^{\varepsilon}\tend{\varepsilon}{0}\nu\text{ in }C(0,T,\cM(\T\times
K))\cap\left(  L^{1}\left(  \left(  0,T\right)  \times\mathbb{T}\right)
,C\left(  K\right)  \right)  ^{\prime},
\]
which verifies the following equation:%
\begin{equation}
\left\{
\begin{array}
[c]{l}%
\partial_{t}\nu_{t,x}+\partial_{x}(u\nu_{t,x})-\dfrac{\sigma+p}{\mu}\nu
_{t,x}-\partial_{\rho}\left(  \dfrac{\rho(\sigma+p)}{\mu}\nu_{t,x}\right)
+\partial_{\theta}\left(  \dfrac{\sigma(\sigma+p)}{\cv\mu\rho}\nu
_{t,x}\right)  =0,\\
\lim\limits_{t\rightarrow0}\left\langle \nu_{t,x},\beta\right\rangle =\lim
\limits_{\varepsilon\rightarrow0}\left\langle \nu_{0,x}^{\varepsilon}%
,\beta\right\rangle ,\text{ \ \ }\forall\beta\in C(K).
\end{array}
\right.  \label{KinEq}%
\end{equation}
\end{theorem}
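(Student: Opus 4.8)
The plan is to obtain, for each fixed $\varepsilon$ and each smooth observable $\beta$, a transport-type (renormalized) equation for $\beta^\varepsilon:=\beta(c^\varepsilon,\rho^\varepsilon,\theta^\varepsilon)=\langle\nu^\varepsilon_{t,x},\beta\rangle$, and then to pass to the limit in it using the strong $L^2$-convergences of $\sigma^\varepsilon$ and $u^\varepsilon$ granted by Theorem \ref{Theorem_main1}. First I would settle the existence of the limit $\nu$. The uniform bounds \eqref{ineglim} confine $(c^\varepsilon,\rho^\varepsilon,\theta^\varepsilon)$ to the fixed compact $K$, so each $\nu^\varepsilon_{t,x}$ is a probability measure on $K$ and the family is bounded by $1$ in the relevant dual norms. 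By the fundamental theorem on Young measures, up to a subsequence there is a weak-$\star$ limit $\nu$ in $\left(L^1((0,T)\times\T),C(K)\right)'$, characterized by $\langle\nu_{t,x},\beta\rangle$ being the weak-$\star$ $L^\infty$ limit of $\beta^\varepsilon$ for every $\beta\in C(K)$. To upgrade this to $\nu\in C(0,T,\cM(\T\times K))$ I would use the $\varepsilon$-level equation below to bound $\partial_t\langle\nu^\varepsilon,\psi\times\beta\rangle$ uniformly in a sum $L^1(0,T)+L^2(0,T)$, which yields equicontinuity in time and, through an Arzel\`a--Ascoli argument in the weak-$\star$ topology of $\cM(\T\times K)$, the desired continuity. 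Convergence of the full sequence (not merely a subsequence) is then a consequence of the uniqueness result for \eqref{KinEq} invoked in the paper's roadmap.

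The core step is the renormalized equation at level $\varepsilon$. Writing $D_t:=\partial_t+u^\varepsilon\partial_x$ and using \eqref{c}, the continuity equation, and the internal-energy equation, one gets $D_tc^\varepsilon=0$, $D_t\rho^\varepsilon=-\rho^\varepsilon\partial_x u^\varepsilon$, and $\rho^\varepsilon\cv(c^\varepsilon)D_t\theta^\varepsilon=\sigma^\varepsilon\partial_x u^\varepsilon$ (since $\cv(c^\varepsilon)$ is transported). Substituting $\partial_x u^\varepsilon=(\sigma^\varepsilon+p^\varepsilon)/\mu(c^\varepsilon)$ and applying the chain rule to $\beta\in C^1(K)$ gives, in conservative form,
\begin{equation*}
\partial_t\beta^\varepsilon+\partial_x(u^\varepsilon\beta^\varepsilon)=\frac{\sigma^\varepsilon+p^\varepsilon}{\mu(c^\varepsilon)}\Bigl(\beta^\varepsilon-\rho^\varepsilon\partial_\rho\beta^\varepsilon\Bigr)+\frac{\sigma^\varepsilon(\sigma^\varepsilon+p^\varepsilon)}{\rho^\varepsilon\cv(c^\varepsilon)\mu(c^\varepsilon)}\partial_\theta\beta^\varepsilon .
\end{equation*}
The chain rule is legitimate here because $\partial_x u^\varepsilon\in L^2(0,T;L^\infty(\T))$ by \eqref{D_A1inf}, so the transport operator $D_t$ admits a well-defined Lipschitz flow and the Hoff solutions are renormalized in the sense of DiPerna--Lions.

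To pass to the limit I note that every coefficient multiplying a power of $\sigma^\varepsilon$ is a continuous function of $(c^\varepsilon,\rho^\varepsilon,\theta^\varepsilon)$, hence equals some $\langle\nu^\varepsilon_{t,x},\cdot\rangle$ and converges weakly-$\star$ in $L^\infty$; pairing these with the strongly convergent factors $\sigma^\varepsilon\to\sigma$ in $L^2$ and $(\sigma^\varepsilon)^2\to\sigma^2$ in $L^1$ (the latter from $\sigma^\varepsilon\to\sigma$ in $L^2$ together with the bound $\sigma^\varepsilon\in L^2(0,T;L^\infty)$ of \eqref{D_A1inf}), each product converges by the weak-times-strong principle, while $\partial_x(u^\varepsilon\beta^\varepsilon)$ is handled with the strong $L^2$-convergence of $u^\varepsilon$. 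This produces the weak formulation
\begin{equation*}
\partial_t\langle\nu,\beta\rangle+\partial_x(u\langle\nu,\beta\rangle)=\Bigl\langle\nu,\tfrac{\sigma+p}{\mu}\beta\Bigr\rangle-\Bigl\langle\nu,\tfrac{\rho(\sigma+p)}{\mu}\partial_\rho\beta\Bigr\rangle+\Bigl\langle\nu,\tfrac{\sigma(\sigma+p)}{\rho\cv\mu}\partial_\theta\beta\Bigr\rangle ,
\end{equation*}
valid for every $\beta\in C^1(K)$, which is exactly the distributional form \eqref{KinEq} after integrating by parts in $\rho$ and $\theta$ (no boundary terms arise since the characteristics keep the values inside $K$). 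The initial condition in \eqref{KinEq} then follows from the time-continuity established above and from passing to the limit in the explicit expression $\langle\nu^\varepsilon_{0,x},\beta\rangle=c_0^\varepsilon\beta(1,\rho_{+,0},\theta_{+,0})+(1-c_0^\varepsilon)\beta(0,\rho_{-,0},\theta_{-,0})$.

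I expect the main obstacle to be the rigorous treatment of the quadratic term $\langle\nu^\varepsilon,\tfrac{\partial_\theta\beta}{\rho\cv\mu}\rangle(\sigma^\varepsilon)^2$: it is precisely here that the strong $L^2$-convergence of the stress tensor (Proposition \ref{sigma}) is indispensable, since a merely weak convergence of $\sigma^\varepsilon$ would leave this product undetermined and would break the closure of the system. The secondary difficulty is securing the time-continuity of $\nu$ valued in $\cM(\T\times K)$, without which the trace $\lim_{t\to 0}\langle\nu_{t,x},\beta\rangle$ in \eqref{KinEq} would be meaningless; this is where the uniform control of $\partial_t\langle\nu^\varepsilon,\psi\times\beta\rangle$ coming from the $\varepsilon$-level equation is used.
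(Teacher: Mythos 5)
Your proposal is correct and follows essentially the same route as the paper: the chain rule applied to $\beta(c^\varepsilon,\rho^\varepsilon,\theta^\varepsilon)$ using the transport of $c^\varepsilon$, the continuity equation and the internal energy equation, the substitution $\partial_x u^\varepsilon=(\sigma^\varepsilon+p^\varepsilon)/\mu(c^\varepsilon)$, equicontinuity of $t\mapsto\langle\nu^\varepsilon,\varphi\otimes\beta\rangle$ plus Ascoli and a $C^1$-density argument for the $C(0,T,\cM(\T\times K))$ convergence, and the weak-times-strong limit passage relying on the strong $L^2$-convergence of $\sigma^\varepsilon$ to close the quadratic term. Your additional remarks (DiPerna--Lions justification of the chain rule, identification of the quadratic term as the crux) are consistent with, and correctly emphasize, the role of Proposition \ref{sigma} as stated in the paper.
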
%

Some observations are necessary in order to clarify some aspects of the above theorem:

\begin{remark}
    A delicate point in the analysis is to obtain the $L^2_{t,x}$-convergence of the sequence $(\sigma_\varepsilon)_{\varepsilon>0}$, \eqref{strongconvsigmaeps}. This point is crucial in order to be able to prove that the Young measures verify the equation \eqref{Young}, contrary to the barotropic case, where a bound in $L^2(0,T,H^1(\T))$ was sufficient, using some compensated compactness lemma. 
\end{remark}

\begin{remark}
Note that $\nu^{\varepsilon}$ is weakly-measurable as for each $\beta\in
C(K)$, $(t,x)\mapsto\ps{\nu^{\varepsilon}_{t,x}}{\beta}\in L^{\infty}([0,T]\times
\T)$. The function $\nu:[0,T]\times\T\rightarrow\Pro(K),(t,x)\mapsto\nu_{t,x}$
is weakly measurable as a limit of weakly measurable functions.
\end{remark}

\begin{remark}\label{Main2.1}
As $K$ is compact, each $\beta\in C(K)$ is uniformly bounded. In particular,
up to a subsequence, $\beta(c^{\varepsilon},\rho^{\varepsilon},\theta
^{\varepsilon})$ converges weakly$-\star$ in $L^{\infty}([0,T]\times\T)$ to a
certain $\overline{\beta(c,\rho,\theta)}$. As $(C(K),\Vert\cdot\Vert_{\infty
})$ is separable, we have
\[
\forall\beta\in C(K),\quad\beta(c^{\varepsilon},\rho^{\varepsilon}%
,\theta^{\varepsilon}) \tendf{\varepsilon}{0} \overline{\beta(c,\rho,\theta)}
\text{ in }L^{2}([0,T]\times\T),
\]
modulo the extraction of a subsequence. By uniqueness of the limit, we obtain
\[
\forall\beta\in C(K),\quad\ps{\nu_{t,x}}{\beta}= \overline{\beta(c,\rho
,\theta)}.
\]
Thus, the family of Young measures $\nu_{t,x}$ characterizes all weak limits
of functions of $(c^{\varepsilon},\rho^{\varepsilon},\theta^{\varepsilon})$
which is exactly what we used to write the equations for $c,u$ and $\sigma$.
\end{remark}

It turns out that system \eqref{system_general} is too general for our purposes: indeed, as explained in the Section 2 (see also
\cite{BrHi,BrHi2,BrBuLa1,BrBuLa2}) when considering mixtures of two compressible fluids, $(c^{\varepsilon},\rho^{\varepsilon},\theta^{\varepsilon
})$ will oscillate between two states $(\alpha_{\pm},\rho_{\pm},\theta_{\pm
})$ corresponding to the characteristics of the two components of the mixtures
which we denote $+$ respectively $-$. Of course, as explained earlier,
$\alpha_{\pm}$ is interpreted as the volume fraction of the component $\pm$.
Mathematically, this is translated by asking the family of Young measures
\eqref{Young} to be at any time a convex combination of Dirac masses:%
\begin{equation}
\nu_{t,x}=\alpha_{+}\left(  t,x\right)  \delta_{\left(  1,\rho
_{+}\left(  t,x\right)  ,\theta_{+}\left(  t,x\right)  \right)  }+\alpha
_{-}\left(  t,x\right)  \delta_{\left(  0,\rho_{-}\left(  t,x\right)
,\theta_{-}\left(  t,x\right)  \right)  }.\label{convex_comb_diracs}%
\end{equation}
In view of the previous remark, this amounts to saying that%
\begin{equation}
\overline{\beta(c,\rho,\theta)}={\alpha}_{+}\left(  t,x\right)  \beta\left(
1,\rho_{+}\left(  t,x\right)  ,\theta_{+}\left(  t,x\right)  \right)
+\alpha_{-}\left(  t,x\right)  \beta\left(  0,\rho_{-}\left(  t,x\right)
,\theta_{-}\left(  t,x\right)  \right)  .\label{characterization}%
\end{equation}
This is precisely what was asked in \eqref{ansatz}. Of course, using $\left(
\text{\ref{characterization}}\right)  $ in system $\left(
\text{\ref{system_general}}\right)  $ we are able to interpret the terms
$$\left\langle \nu,\frac{1}{\mu\left(  c\right)  }\right\rangle ,\quad
\left\langle \nu,\frac{c}{\mu\left(  c\right)  }\right\rangle ,\quad
\left\langle \nu,\frac{cp\left(  c,\rho,\theta\right)  }{\mu\left(  c\right)
}\right\rangle $$
with respect to $(\alpha_{\pm},\rho_{\pm},\theta_{\pm})$ and as such
to obtain a closed system of partial differential equations for $(\alpha_{\pm},\rho_{\pm},\theta_{\pm
},u)$ namely the macroscopic averaged system. It turns out that the structure $\left(  \text{\ref{convex_comb_diracs}%
}\right)$ i.e. convex combination of Dirac masses is propagated in time from the initial data:

\begin{theorem}
\label{Theorem_main3}Assume the same hypothesis and notations as in Theorem
\ref{Theorem_main2}. Moreover, assume the existence of $\left(  \alpha
_{+,0},\rho_{\pm,0},\theta_{\pm,0}\right)  \in L^{\infty}\left(
\mathbb{T}\right)  $ such that%
\[
\nu_{0,x}=\alpha_{+,0}\left(  x\right)  \delta_{\left(  1,\rho_{+,0}\left(
x\right)  ,\theta_{+,0}(x)\right)  }+\left(  1-\alpha_{+,0}\left(  x\right)
\right)  \delta_{\left(  0,\rho_{-,0}\left(  x\right)  ,\theta_{-,0}%
(x)\right)  }.
\]
Then there exists some $\alpha_{\pm},\rho_{\pm},\theta_{\pm}\in L^{\infty
}([0,T]\times\T)^{3}$ such that
\begin{equation*}
\nu_{t,x}=\alpha_{+}\left(  t,x\right)  \delta_{\left(  1,\rho_{+}\left(
t,x\right)  ,\theta_{+}\left(  t,x\right)  \right)  }+\alpha_{-}\left(
t,x\right)  \delta_{\left(  0,\rho_{-}\left(  t,x\right)  ,\theta_{-}\left(
t,x\right)  \right)  }.
\end{equation*}
These functions can be characterized with respect to the sequence
$(c^{\varepsilon},\rho^{\varepsilon},\theta^{\varepsilon},u^{\varepsilon})$
solution "\`{a} la Hoff" of \eqref{NS} with initial data $(c_{0}%
^{\varepsilon},\rho_{0}^{\varepsilon},\theta_{0}^{\varepsilon},u_{0}%
^{\varepsilon}):$
\begin{align*}
&  c^{\varepsilon}\tend{\varepsilon}{0}\alpha_{+},\text{ }\left(
1-c^{\varepsilon}\right)  \tend{\varepsilon}{0}\alpha_{-}\text{ weakly-}%
\star\text{in }L^{\infty}([0,T]\times\T),\\
&  c^{\varepsilon}\rho^{\varepsilon},c^{\varepsilon}\theta^{\varepsilon
}\tend{\varepsilon}{0}\alpha_{+}\rho_{+},\alpha_{+}\theta_{+}\text{
weakly-}\star\text{in }L^{\infty}([0,T]\times\T)\text{ },\\
&  (1-c^{\varepsilon})\rho^{\varepsilon},(1-c^{\varepsilon})\theta
^{\varepsilon}\tend{\varepsilon}{0}\alpha_{-}\rho_{-},\alpha_{-}\theta
_{-}\text{ weakly-}\star\text{in }L^{\infty}([0,T]\times\T)\text{ },\\
&  u^{\varepsilon}\tendf{\varepsilon}{0}u\text{ in }H^{1}([0,T]\times\T),\\
&  u^{\varepsilon}\tend{\varepsilon}{0}u\text{ in }L^{2}(0,T,L^{2}(\T)),
\end{align*}
The functions $\left(  \alpha_{\pm},\rho_{\pm},\theta_{\pm},u\right)  $ verify
the system \eqref{systmacro2}.
\end{theorem}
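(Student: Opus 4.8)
The plan is to prove propagation of the two-Dirac structure by a uniqueness argument for the linear kinetic equation \eqref{KinEq}. The first observation is that, $u$ and $\sigma$ being already fixed by Theorem \ref{Theorem_main2} as the strong $L^2$ limits, equation \eqref{KinEq} is a \emph{linear} transport equation for the measure-valued unknown $(t,x)\mapsto\nu_{t,x}$ on the extended phase space $\T\times K$. Since \eqref{KinEq} contains no $\partial_c$ term, the coordinate $c$ is frozen along the flow, so mass initially carried on the two slices $\{c=1\}$ and $\{c=0\}$ cannot leak from one to the other.

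First I would construct an explicit candidate of two-Dirac form. Given the data $(\alpha_{+,0},\rho_{\pm,0},\theta_{\pm,0})$ and the fixed fields $(u,\sigma)$, I would integrate the characteristic ODE system associated with \eqref{KinEq},
\[
\dot X = u(t,X),\qquad \dot\rho_\pm = -\frac{\rho_\pm(\sigma+p_\pm)}{\mu_\pm},\qquad \dot\theta_\pm = \frac{\sigma(\sigma+p_\pm)}{\cv_\pm\mu_\pm\rho_\pm},
\]
together with the evolution of the weights $\alpha_\pm$ dictated by the reaction coefficient $\tfrac{\sigma+p_\pm}{\mu_\pm}$ and by $\partial_x u$; this is exactly the macroscopic system \eqref{systmacro2}. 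The Hoff bounds of Definition \ref{defhoff} give $\partial_x u\in L^2(0,T;L^\infty)$ and $\sigma\in L^\infty(0,T;L^2)\cap L^2(0,T;L^\infty)$ by \eqref{D_A1inf}, so for almost every $t$ these fields are Lipschitz in $x$ with Lipschitz constant in $L^2_t$; combined with the lower bound $\rho_\pm\geq\underline\rho>0$ (which keeps the $(\rho,\theta)$-drifts smooth in the phase variables), this yields a well-defined flow and a unique solution $(\alpha_\pm,\rho_\pm,\theta_\pm)$ remaining valued in $K$. Setting
\[
\tilde\nu_{t,x}=\alpha_+(t,x)\,\delta_{(1,\rho_+(t,x),\theta_+(t,x))}+\alpha_-(t,x)\,\delta_{(0,\rho_-(t,x),\theta_-(t,x))},
\]
a direct computation, which is the rigorous reverse of the formal derivation of Proposition \ref{CrucialProp}, shows that $\tilde\nu$ solves \eqref{KinEq} and matches the prescribed initial datum $\nu_{0,x}$.

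The heart of the argument, and the step I expect to be the main obstacle, is to prove \emph{uniqueness} for \eqref{KinEq} in the class of measure-valued solutions produced by Theorem \ref{Theorem_main2}, so as to identify $\nu=\tilde\nu$. The difficulty is the limited regularity of the transport field inherited from the Hoff estimates: $u(t,\cdot)$ is only Lipschitz in $x$ for a.e. $t$, while the $(\rho,\theta)$-drifts and the reaction term depend on $\sigma\in L^2_tL^\infty_x$. I would handle this by a duality argument: for a smooth final datum I would solve the backward dual transport equation along the characteristics above, which are uniquely solvable precisely because the Lipschitz constants of the fields are integrable in time (a Grönwall bound using $\partial_x u,\sigma\in L^2_tL^\infty_x$ and $\rho\geq\underline\rho$), and then pair it with the difference of two solutions of \eqref{KinEq} to show that difference vanishes. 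The upper and lower bounds \eqref{D_UL_rho}--\eqref{D_A1inf} are exactly what make the coefficients admissible for this scheme.

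Finally, once $\nu=\tilde\nu$ is established, the stated convergences follow immediately. By Remark \ref{Main2.1}, every weak-$\star$ limit $\overline{\beta(c,\rho,\theta)}$ equals $\langle\nu_{t,x},\beta\rangle$, so choosing successively $\beta(c,\rho,\theta)=c,\ c\rho,\ c\theta$ and $1-c,\ (1-c)\rho,\ (1-c)\theta$ reads off the weak-$\star$ limits of $c^\varepsilon,\ c^\varepsilon\rho^\varepsilon,\ c^\varepsilon\theta^\varepsilon$ and of their $-$ counterparts as $\alpha_\pm,\ \alpha_\pm\rho_\pm,\ \alpha_\pm\theta_\pm$; the convergences of $u^\varepsilon$ are already provided by Theorem \ref{Theorem_main1}. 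That the resulting $(\alpha_\pm,\rho_\pm,\theta_\pm,u)$ satisfy \eqref{systmacro2} then holds by the very construction of the candidate in the first step.
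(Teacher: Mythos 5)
Your overall architecture is the same as the paper's: construct an explicit two-Dirac solution of the kinetic equation \eqref{KinEq} driven by the fixed limit fields $(u,\sigma)$, prove uniqueness for \eqref{KinEq}, and identify the Young measure with the candidate. The gap is in the uniqueness step, which is the real content of the theorem and which you dispose of with an incorrect regularity claim. You assert that the characteristics of \eqref{KinEq} are uniquely solvable "because the Lipschitz constants of the fields are integrable in time (a Gr\"onwall bound using $\partial_x u,\sigma\in L^2_tL^\infty_x$)". But the drifts in the $\rho$- and $\theta$-directions are $-\rho(\sigma+p)/\mu$ and $\sigma(\sigma+p)/(\cv\mu\rho)$, and their dependence on the spatial variable goes through $\sigma(t,\cdot)$ and $\sigma^2(t,\cdot)$. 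Knowing $\sigma\in L^2(0,T;L^\infty(\T))$ controls the \emph{size} of these drifts, not their Lipschitz constant in $x$: that would require $\partial_x\sigma$ and $\partial_x(\sigma^2)$ in $L^1(0,T;L^\infty(\T))$, whereas the Hoff bounds \eqref{D_A1}--\eqref{secondhoff} only give $\partial_x\sigma\in L^2(0,T;L^2(\T))$ (hence $\sigma(t,\cdot)$ merely H\"older-$1/2$ in $x$). So the vector field on the extended phase space $\T\times K$ is \emph{not} Lipschitz with integrable constant, Cauchy--Lipschitz does not apply to the backward characteristics, and the naive duality/Gr\"onwall argument does not close.

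The paper resolves exactly this point in two alternative ways: (i) a tailored uniqueness lemma for fields with triangular Jacobian structure, where only the first component ($u$, the coefficient of $\partial_x$) is Lipschitz and the $x$-derivatives of the remaining components are merely $L^1_tL^1_x$; the proof regularizes the coefficients in $x$ only, at \emph{different} scales for the different components ($\eta^2$ for $u_1$ versus $\eta$ for the rest), so that the product $(u_1^\eta-u_1)\partial_1\varphi^\eta = O(\eta^2)\cdot O(\eta^{-1})$ still vanishes; or (ii) the additional hypothesis $u_0\in W^{1,2+\eta}(\T)$, under which one can upgrade $\partial_x\sigma$ and $\partial_x(\sigma^2)$ to $L^1(0,T;L^\infty(\T))$ via weighted estimates and then use standard uniqueness. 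Your proposal would need one of these (or an equivalent device) to be complete. A secondary, smaller omission: in the construction step the equation for $\rho_\pm\theta_\pm$ is of Riccati type (quadratic in the unknown), so global existence on $[0,T]$ and the positive lower bound keeping the solution inside $K$ are not automatic; the paper proves them by a fixed-point plus bootstrap argument and an explicit differential inequality, whereas you take them for granted.
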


The proof of Theorem \ref{Theorem_main3} follows the ideas introduced in
\cite{BrHi2}: 

\begin{itemize}
\item We show that given $u=u\left(  t,x\right)$, $\sigma = \sigma(t,x)$ having the regularity announced in Definition \eqref{defhoff}, the equation $\left(  \text{\ref{KinEq}%
}\right)  $ has at most one solution;

\item In a second step we construct a solution that verifies  $\left(
\text{\ref{KinEq}}\right)  $ and using uniqueness, we conclude.
\end{itemize}
Theorem \ref{Theorem_main3} allows us to characterize the terms 
$$\left\langle \nu,\frac{1}{\mu\left(  c\right)  }\right\rangle ,\quad
\left\langle \nu,\frac{c}{\mu\left(  c\right)  }\right\rangle , \quad
\left\langle \nu,\frac{cp\left(  c,\rho,\theta\right)  }{\mu\left(  c\right)
}\right\rangle,$$
to obtain finally the macroscopic averaged system \eqref{systmacro2}.

\section{Global solution and uniqueness for 
Navier-Stokes with low 
regularity temperature}\label{glob_sol}

\subsection{Energy estimates and bounds}\label{NRJ}
To justify the formal computations that follow, we assume that $(c,\rho,u,\theta)$ is smooth, and that $\rho>0$, $\theta>0$. The existence of such solutions is granted by the result of J. Li \cite{Li}, with minor modifications. We first present some notations and properties of operators. Then we present the conservation of mass and total energy and conclude with some bounds including an upper bound on the density and a lower bound on the temperature. We then present a key point in our paper which is a bound on the stress tensor $\sigma$, following an idea from J. Li that had to be adapted to the presence of the color function $c$. We can then give a bound on $\rho \theta$ and conclude with the lower bound for the density $\rho$ and the upper bound for the temperature $\theta$. The constants we will define in this section depend on $\mu_\pm,\gamma_\pm,\cv_\pm$ and $T$. For readability reasons, these dependencies will be implicit.

\subsubsection{Notations and properties of operators}
Under the assumption $\partial_x u \in L^1(0,T,L^\infty(\T))$, $u$ is Lipschitz continuous in space and the Eulerian and Lagrangian points of view are morally equivalent. We will often be switching back and forth in what follows, and this subsection gives some elementary tools in order to make this transition as efficient as possible.

\bigskip

\noindent {1) \it The operator \texorpdfstring{$D_t$}{Dt}:}
Recall the notation $D_t(f) = \dot{f} = \partial_t f + u\partial_x f$ of the total derivative of $f$. This definition comes naturally, since defining $X(t,s,x)$ as the solution of

\begin{empheq}[left=\empheqlbrace]{align*}
    \forall (t,x)\in [0,T]\times \T,\quad \partial_t X(t,s,x) &= u(t,X(t,s,x))\\
    \forall x\in \T,\quad X(s,s,x)&=x
\end{empheq}
we get
\begin{equation*}
    \frac{d}{dt} f(t,X(t,s,x)) = (D_t f)(t,X(t,s,x)).
\end{equation*}
The total derivative verifies the classical equalities of a derivative
\begin{equation*}
    D_t(fg)=D_t(f)g + fD_t(g)
\end{equation*}
 and
\begin{equation}\label{dercomposee}
    D_t(\varphi\circ f) = D_t(f)\,\varphi'(f)
\end{equation}
if $\varphi :\R\rightarrow\R$. 
In particular,
\begin{equation}\label{cquisort0}
    D_t(cf) = cD_t(f).
\end{equation}
Moreover,
\begin{equation}\label{Dtdt}
    \int_0^1 \rho D_t(f) = \frac{d}{dt}\int_0^1 \rho f.
\end{equation}

\medskip

\noindent {2) \it The operator \texorpdfstring{$I$}{I}.}
We define an operator $I$, inverting $D_t$, by
\begin{equation*}
    I(f) = \int_0^t f(s,X(s,t,x)) ds
\end{equation*}
in order to get
\begin{equation}\label{Ihihi}
    D_t(I(f)) = f,\quad I(D_t(f)) = f - f_0(X(0,t,x)).
\end{equation}
Moreover, we have
\begin{equation}\label{cquisort}
    I(c f) = c I(f).
\end{equation}

\medskip

\noindent 3) {\itshape A switching formula:}
In the calculations that follow, we may need to switch the symbols $D_t$ and $\partial_x$. We use the formula
\begin{equation}\label{switch}
    \partial_x D_t(f) - D_t(\partial_x f) = (\partial_x f)(\partial_x u)
\end{equation}

\medskip

\noindent {4) \it The operator \texorpdfstring{$\partial_x^{-1}$}{partial x -1}:}
We define the operator $\partial_x^{-1}$ on the space of zero-mean-space functions by
\begin{equation*}
    \partial_x^{-1} f = \int_0^1 \int_y^x f(z)dz dy
\end{equation*}
in order to get
\begin{equation*}
    \partial_x \partial_x^{-1}f = f
\end{equation*}
\begin{equation*}
    \partial_x^{-1}\partial_x f = f - \int_0^1 f.
\end{equation*}
Moreover, we have
\begin{equation}\label{dx-1}
    \vert \partial_x^{-1} f \vert \leq \Vert f\Vert_1.
\end{equation}
Note that
\begin{equation}\label{contedete}
    D_t\partial_x^{-1} f = \partial_x^{-1} (\partial_t f + \partial_x (uf))+ \int_0^1 uf.
\end{equation}





\subsubsection{Conservative quantities and bounds related to \texorpdfstring{$\rho$, $\theta$, $\rho \theta$ and $\sigma$}{rho, theta, rho theta and sigma}.}

\noindent{1) \it New formulation of the system:}
From the system \eqref{NS} we deduce the equalities:
\begin{subequations}\label{NS_L}
\begin{empheq}[left=\empheqlbrace]{alignat=1}
\dot{c} &= 0 \label{c_L}\\
\dot{\rho}&=-\rho \partial_x u
\label{cont_L}\\
\rho \dot{u}&=\partial_x \sigma
\label{momts_L}\\
\rho \dot{e} &= \sigma\partial_x u
\label{energint_L}
\end{empheq}
\end{subequations}
\noindent{2) \it Conservation of mass and total energy:} Integrating in space \eqref{cont} gives us conservation 
of total mass:
\begin{equation*}
   \cM(t):= \int_0^1\rho = \int_0^1\rho_0 =\cM_0 \label{mass}.
\end{equation*}
Moreover, integrating in space \eqref{energ} gives conservation of total energy:
\begin{equation*}
    \cE(t):= \int_0^1 \rho E = \int_0^1 \rho_0 E_0 = \cE_0. \label{enrgtot}
\end{equation*}
Remark that 
\begin{equation*}
{\cal{M}}_0\leq \overline{\rho_0},\quad {\cal E}_0\leq \overline{\rho_0}(R_{max}\overline{\theta_0} + C_0^2/2).
\end{equation*}
We deduce
\begin{equation}
    \int_0^1 p = \int_0^1 (\gamma-1)\rho e\leq (\gamma_{max}-1)\cE_0,
\label{p}
\end{equation}
\begin{equation}
    \int_0^1 \frac{\rho u^2}{2} \leq \cE_0.
\label{rhou2}
\end{equation}
Finally, using Cauchy-Schwartz inequality,
\begin{equation}
    \int_0^1 \vert\rho u\vert = \sqrt{2}\int_0^1 \sqrt{\rho}\sqrt{\rho}\frac{\vert u\vert}{\sqrt{2}}\leq \sqrt{2}\sqrt{\int_0^1\rho}\sqrt{\int_0^1 \frac{\rho u^2}{2}}\leq  \sqrt{2\cM_0\cE_0}.
\label{rhou}
\end{equation}
Remark that, if $v$ is a constant, the change of unknowns
\begin{equation*}
    (c(t,x),\rho(t,x),u(t,x),E(t,x)) \rightarrow (c(t,x-vt),\rho(t,x-vt), u(t,x-vt)+v, E(t,x-vt))
\end{equation*}
does not change the problem (it is the Galilean invariance principle). Choosing
\begin{equation*}
    v = -\frac{1}{\cM_0}\int_0^1\rho_0 u_0
\end{equation*}
we can suppose 
\begin{equation*}
    \int_0^1\rho_0u_0 = 0
\end{equation*}
then, integrating \eqref{momts} in space
\begin{equation}\label{eq:momtsnul}
    \int_0^1\rho u = \int_0^1\rho_0 u_0 = 0.
\end{equation}
\begin{prop}
There exists some $H_1 = H_1({\cal M}_0,{\cal E}_0)>0$ such that
\begin{equation}\label{Isigma}
\vert I(\sigma)\vert\leq H_1
\end{equation}
\end{prop}
\begin{proof}
Let's apply $\partial_x^{-1}$ to \eqref{momts}:
\begin{equation*}
    \partial_x^{-1}\partial_t (\rho u) + \partial_x^{-1}\partial_x(\rho u^2) = \partial_x^{-1}\partial_x\sigma = \sigma - \int_0^1 \sigma.
\end{equation*}
By \eqref{contedete} we get
\begin{equation*}
    D_t(\partial_x^{-1}(\rho u)) - \int_0^1\rho u^2 = \sigma - \int_0^1\sigma.
\end{equation*}
Applying $I$ and using \eqref{Ihihi} we obtain
\begin{equation*}
    I(\sigma)(t,x) =\partial_x^{-1}(\rho u)(t,x)- \partial_x^{-1}(\rho_0 u_0)(X(0,t,x)) -\int_0^t\int_0^1\rho u^2 + \int_0^t\int_0^1\sigma.
\end{equation*}
Therefore, by \eqref{dx-1}, \eqref{rhou}, \eqref{rhou2} and \eqref{p},
\begin{equation}\label{Isigma0}
    \vert I(\sigma)\vert \leq 2 \sqrt{2\cM_0\cE_0} + T\cE_0 + T(\gamma_{max}-1)\cE_0 + \left\vert \int_0^t\int_0^1 \mu\partial_x u \right\vert.
\end{equation}
Moreover, by \eqref{c}
\begin{equation}\label{astuuce}
    \left\vert\int_0^t\int_0^1 \mu\partial_x u\right\vert = \left\vert-\int_0^t\int_0^1 u \partial_x \mu \right\vert= \left\vert\int_0^t \frac{d}{dt}\int_0^1 \mu\right\vert = \left\vert \int_0^1\mu(t) - \int_0^1\mu_0(t)\right\vert\leq \mu_{max}.
\end{equation}
Finally, by \eqref{Isigma0} and \eqref{astuuce} we get the result.
\end{proof}
\medskip
\begin{prop}
There exists some $\overline{\rho} = \overline{\rho}(\overline{\rho_0}, {\cal E}_0)>0$ such that
\begin{equation}\label{upperrho}
\rho \leq \overline{\rho} \quad\text{a.e.}.
\end{equation}
\end{prop}
\begin{proof}
Using \eqref{cquisort0} then \eqref{cont_L} we get 
\begin{equation*}
    D_t(\mu\ln(\rho)) = \mu D_t(\ln(\rho)) = -\mu\partial_x u = -\sigma - p
\end{equation*}
then, applying $I$, using \eqref{Ihihi} and the fact that $I(p)\geq 0$,
\begin{align*}
    \mu\ln(\rho) &= \mu_0(X(0,t,x))\ln(\rho_0(X(0,t,x))) - I(\sigma) - I(p)\label{macrondemission}
    \\&\leq \mu_{max}\vert \ln(\overline{\rho_0})\vert + \vert I(\sigma)\vert
    \\&\leq \mu_{max}\vert\ln(\overline{\rho_0})\vert + H_1.
\end{align*}
Dividing by $\mu$ then taking the exponential we get
\begin{equation}
\rho\leq \overline{\rho_0}\exp\left(\frac{\mu_{max}+H_1}{\mu_{min}}\right),
\end{equation}
hence the result.
\end{proof}
\begin{prop}
There exists some $\underline{\theta}=\underline{\theta}(\underline{\theta_0},\overline{\rho_0},{\cal E}_0)>0$ such that
\begin{equation}\label{lowertheta}
\theta\geq\overline{\theta}\quad \text{a.e.}.
\end{equation}
\end{prop}
\begin{proof}
Using \eqref{energint_L}, we have
\begin{align*}
    \mu\rho \dot{e} = \sigma(\mu\partial_x u) = \sigma^2 + \sigma p = (\sigma + p/2)^2 - p^2/4 \geq -p^2/4
\end{align*}
so
\begin{equation*}
    \dot{\theta} \geq - \frac{R_{max}^2\overline{\rho} \theta^2}{4\mu_{min}\cv_{min}}.
\end{equation*}
Then dividing by $-\theta^2$ and using \eqref{dercomposee}
\begin{equation*}
    D_t\left(\frac{1}{\theta}\right) \leq \frac{R_{max}^2\overline{\rho}}{4\mu_{min}\cv_{min}}.
\end{equation*}
Applying $I$ and using \eqref{Ihihi} we obtain
\begin{equation*}
    \frac{1}{\theta} \leq \frac{1}{\theta_0(X(0,t,x))} + t\frac{R_{max}^2\overline{\rho}}{4\mu_{min}\cv_{min}} \leq \frac{1}{\underline{\theta_0}} + T\frac{R_{max}^2\overline{\rho}}{4\mu_{min}\cv_{min}}
\end{equation*}
hence the result, denoting
\begin{equation*}
    \underline{\theta}:= \frac{1}{1/\underline{\theta_0}+T\dfrac{R_{max}^2\overline{\rho}}{4\mu_{min}\cv_{min}}}>0.
\end{equation*}
\end{proof}
\begin{prop}\label{sigma} 
Assuming that smooth solutions of the system occur with
\eqref{upperrho} and estimates in subsection \ref{lowertheta} satisfied. Then 
$\sigma$ satisfies the equation
$$\dot{\sigma} = \mu \partial_x\left(\frac{\partial_x\sigma}{\rho}\right)  - \gamma\sigma\partial_x u.
$$
Moreover, there exists some $C_1 = C_1(\overline{\rho_0},\overline{\theta_0},C_0)>0$ and $C_2 = C_2(\overline{\rho_0},\overline{\theta_0},C_0)>0$ such that
\begin{equation}\label{eq:Abound}
\sup_{[0,T]}\int_0^1\sigma^2 + \int_0^T\int_0^1 (\partial_x\sigma)^2\leq C_1,
\end{equation}
\begin{equation}\label{eq:sigmainf}
    \int_0^T\Vert\sigma\Vert_\infty^2 \leq C_2.
\end{equation}
\end{prop}
\begin{proof}
Using \eqref{NS_L} we get
\begin{align}
    \dot{\sigma} &= \mu D_t(\partial_x u) - RD_t(\rho \theta)\nonumber
    \\& = \mu \partial_x \dot{u} - \mu(\partial_x u)^2 - R\dot{\rho}\theta - R\rho \dot{\theta}\nonumber
    \\& = \mu \partial_x\left(\frac{\partial_x\sigma}{\rho}\right) - \mu(\partial_x u)^2  + R(\partial_x u)\rho\theta - (\gamma-1)\sigma\partial_x u\nonumber 
    \\& = \mu \partial_x\left(\frac{\partial_x\sigma}{\rho}\right)  - \gamma\sigma\partial_x u.\label{sigmapoint}
\end{align}
Thus if $\beta:\R\rightarrow\R$ is $C^1$,
\begin{align*}
    D_t(\beta(\sigma)) = \dot{\sigma}\beta'(\sigma) = \mu\partial_x\left(\frac{\partial_x\sigma}{\rho}\right) \beta'(\sigma) - \gamma(\sigma\partial_x u)\beta'(\sigma)
\end{align*}
hence, dividing by $\mu$,
\begin{equation}\label{sigmarenormalize}
    \partial_t \left(\frac{\beta(\sigma)}{\mu}\right) + \partial_x \left( u \frac{\beta(\sigma)}{\mu}\right) - \partial_x \left(\frac{\partial_x\sigma}{\rho}\right)\beta'(\sigma) = \frac{\partial_x u}{\mu}(\beta(\sigma)-\gamma\sigma\beta'(\sigma))
\end{equation}
Choosing $\beta(x)={(x)^+}^2/2$ in \eqref{sigmarenormalize} (where $(x)^+ = \max(0,x)$), then integrating in space, using the fact that
\begin{equation*}
(\partial_x \sigma)(\partial_x(\sigma)^+) = (\partial_x(\sigma)^+)^2,    
\end{equation*}
we get
\begin{equation}\label{sigmaplus}
    \frac{d}{dt}\int_0^1\frac{{(\sigma)^+}^2}{2\mu} + \int_0^1 \frac{(\partial_x(\sigma)^+)^2}{\rho} = \frac{\partial_x u}{\mu}\left(\frac{1}{2}-\gamma\right)\sigma_+^2
\end{equation}
But 
\begin{equation*}
    (\partial_x u) \I_{\sigma\geq 0} = \frac{\sigma + p}{\mu}\I_{\sigma\geq 0} \geq 0.
\end{equation*}
So \eqref{sigmaplus} gives
\begin{equation*}
    \frac{d}{dt}\int_0^1 \frac{{(\sigma)^+}^2}{2\mu} + \int_0^1 \frac{(\partial_x(\sigma)^+)^2}{2\rho} \leq 0
\end{equation*}
hence
\begin{equation*}
    \int_0^1 \frac{{(\sigma)^+}^2}{2\mu} + \int_0^T\int_0^1 \frac{(\partial_x(\sigma)^+)^2}{\rho} \leq \int_0^1 \frac{{{(\sigma_0)}^+}^2}{2\mu_0} \leq \int_0^1 \frac{\sigma_0^2}{2\mu_0}.
\end{equation*}
Finally, using the equality $\vert\sigma\vert = 2(\sigma)^+ - \sigma$, we get
\begin{align}\label{vertsigmavert}
    \int_0^1\vert \sigma\vert &\leq \mu_{max}\int_0^1 \frac{\vert \sigma\vert}{\mu} = \mu_{max}\int_0^1 \frac{2(\sigma)^+}{\mu} - \mu_{\max}\int_0^1\frac{\sigma}{\mu}.
\end{align}
But 
\begin{equation*}
    -\mu_{max}\int_0^1 \frac{\sigma}{\mu} = -\mu_{max}\int_0^1 \partial_x u + \mu_{max}\int_0^1 \frac{p}{\mu} \leq \frac{\mu_{max}}{\mu_{min}} \int_0^1 p \leq \frac{\mu_{max}}{\mu_{min}}(\gamma_{max}-1)\cE_0.
\end{equation*}
so by Young's inequality, \eqref{vertsigmavert} gives
\begin{equation*}
    \int_0^1 \vert \sigma\vert \leq \int_0^1 \frac{{(\sigma)^+}^2}{2\mu} + \int_0^1\frac{2}{\mu}+\frac{\mu_{max}}{\mu_{min}}(\gamma_{max}-1)\cE_0
\end{equation*}
hence by \eqref{vertsigmavert}
\begin{equation}
    \int_0^1 \vert \sigma\vert \leq \int_0^1 \frac{\sigma_0^2}{2\mu} + \frac{2}{\mu_{min}}+\frac{\mu_{max}}{\mu_{min}}(\gamma_{max}-1)\cE_0 =:H_2.
\end{equation}
Evaluating \eqref{sigmarenormalize} with $\beta(x)=x^2/2$, then integrating in space, we get
\begin{align}
    \frac{d}{dt} \int_0^1 \frac{\sigma^2}{2\mu} + \int_0^1\frac{(\partial_x\sigma)^2}{\rho} &= \int_0^1\frac{\partial_x u}{\mu}\left(\frac{1}{2}-\gamma\right)\sigma^2\nonumber
    \\&=\int_0^1\frac{\frac{1}{2}-\gamma}{\mu}\sigma^3 + \int_0^1\frac{\frac{1}{2}-\gamma}{\mu^2}\sigma^2p\nonumber
    \\&\leq \int_0^1\frac{1}{\mu^2}\left(\gamma-\frac{1}{2}\right)\vert \sigma\vert \sigma^2\nonumber
    \\&\leq \frac{H_2}{\mu_{min}^2}\left(\gamma_{max}-\frac{1}{2}\right)\Vert\sigma\Vert_\infty^2 =: H_3 \Vert\sigma\Vert_\infty^2.\label{sigmaineg}
\end{align}
Using now the Gagliardo-Nirenberg
\begin{equation}
    \Vert \sigma\Vert_\infty^2 \leq \Vert \sigma\Vert_{2}^2 + 2 \Vert \sigma\Vert_{2}\Vert \partial_x \sigma\Vert_{2}
\end{equation}
we get by Young's inequality
\begin{align*}
    H_3\Vert \sigma\Vert_\infty^2 &= H_3\int_0^1 \sigma^2 + 2H_3\left(\int_0^1 \sigma^2\right)^{1/2}\left(\int_0^1 (\partial_x\sigma)^2\right)^{1/2}
    \\&\leq 2\mu_{max}(H_3 + 2H_3^2\overline{\rho})\int_0^1 \frac{\sigma^2}{2\mu} + \frac{1}{2}\int_0^1 \frac{(\partial_x \sigma)^2}{\rho}
\end{align*}
then \eqref{sigmaineg} gives
\begin{align*}
    \frac{d}{dt}\int_0^1\frac{\sigma^2}{2\mu} + \frac{1}{2}\int_0^1 \frac{(\partial_x\sigma)^2}{\rho} \leq 2\mu_{max}(H_3+2 H_3^2\overline{\rho})\int_0^1 \frac{\sigma^2}{2\mu}.
\end{align*}
Hence, by Grönwall's lemma,
\begin{equation*}
   \int_0^1 \frac{\sigma^2}{2\mu} + \frac{1}{2}\int_0^t\int_0^1 \frac{(\partial_x\sigma)^2}{\rho} \leq \exp(2\mu_{max}(H_3+2H_3^2\overline{\rho})T)\int_0^1\frac{\sigma_0^2}{2\mu_0}.
\end{equation*}
As $\mu\geq \mu_{min}$ and $\rho\leq\overline{\rho}$, we get \eqref{eq:Abound}. Therefore, by the inclusions of Sobolev, there exists some $C_2>0$ such that \eqref{eq:sigmainf} holds.

\ 

Roughly speaking, \eqref{eq:Abound} is the first Hoff energy get in \cite{BrBuLa1} in the barotrope case. Indeed, by \eqref{momts_L}, $\rho \dot{u}^2=\dfrac{(\partial_x\sigma)^2}{\rho}$ and $\sigma^2 = (\mu \partial_x u - p)^2$ looks like  $(\partial_x u)^2$.
\end{proof}
\begin{prop}\label{prop:5}
There exists some $\overline{p}=\overline{p}(\overline{\rho_0},\overline{\theta_0}, C_0)>0$ such that 
\begin{equation}
p\leq \overline{p} \quad \text{a.e.}.
\end{equation}
\end{prop}
\begin{proof}
From \eqref{cont_L} and \eqref{energint_L} we deduce
\begin{align*}
    \mu\cv D_t(\rho\theta) &= \mu\dot{\rho}\cv\theta + \mu\cv \rho \dot{\theta} = -\cv\rho \theta (\mu\partial_x u) + \sigma(\mu\partial_x u) \\&= -\cv \rho\theta \sigma - \cv\rho\theta p  + \sigma^2 +\sigma p 
    \\& = -\cv R (\rho\theta)^2 + \cv(\gamma-2)\rho\theta\sigma + \sigma^2
    \\& = -\cv R\left(\rho\theta-\frac{\gamma-2}{2 R}\sigma\right)^2 + \left(1 + \frac{(\gamma-2)^2}{4(\gamma-1)}\right)\sigma^2
    \\&\leq \left(1+\frac{(\gamma-2)_{max}^2}{4(\gamma_{min}-1)}\right)\Vert \sigma\Vert_\infty^2
\end{align*}
hence, denoting
\begin{equation*}
    H_4 := \frac{1}{\mu_{min}\cv_{min}}\left(1+\frac{(\gamma-2)_{max}^2}{4(\gamma_{min}-1)}\right),
\end{equation*}
\begin{equation*}
    D_t(\rho\theta)\leq H_4 \Vert \sigma\Vert_\infty^2.
\end{equation*}
Multiplying by $R$, using \eqref{cquisort0}, then applying $I$ and using \eqref{Ihihi}, \eqref{cquisort} we get 
\begin{align*}
    p=R\rho\theta \leq R(H_1(X(0,t,x)))\rho_0(X(0,t,x))\theta_0(X(0,t,x)) + R_{max}H_4\int_0^t \Vert\sigma\Vert^2_\infty 
\end{align*}
then from \eqref{eq:sigmainf}
\begin{equation}\label{upperp}
    p\leq R_{max}\overline{\rho_0}\overline{\theta_0} + R_{max}H_4 C_2.
\end{equation}
\end{proof}
\medskip
\begin{prop}
There exists some $\underline{\rho}=\underline{\rho}(\underline{\rho_0},\overline{\rho_0},\overline{\theta_0}, C_0)>0$ and $\overline{\theta} = \overline{\theta}(\underline{\rho_0},\overline{\rho_0},\overline{\theta_0},C_0)>0$ such that
\begin{equation}\label{lowerrho}
\rho\geq \underline{\rho}\quad \text{a.e.},
\end{equation} 
\begin{equation}\label{uppertheta}
\theta\leq \overline{\theta}\quad\text{a.e.}.
\end{equation}
\end{prop}
\begin{proof}
Using \eqref{Isigma} and \eqref{upperp}, and imitating the computations of 3) we obtain
\begin{align*}
    \mu\ln(\rho) &= \mu_0(X(0,t,x))\ln(\rho_0(X(0,t,x))) - I(\sigma)-I(p)
    \\&\geq -\mu_{max}\vert\ln(\underline{\rho_0})\vert - H_1 - T\overline{p}.
\end{align*}
So, dividing by $\mu$ then applying the exponential function, we get \eqref{lowerrho}. Thanks to \eqref{upperp} and \eqref{lowerrho}, it is obvious to obtain \eqref{uppertheta}.
\end{proof}
\begin{prop}\label{dxu} 
There exists some $C_3 = C_3(\overline{\rho_0},\underline{\rho_0},\overline{\theta_0},C_0)>0$,  $C_4 = C_4(\overline{\rho_0},\underline{\rho_0},\overline{\theta_0},C_0)>0$,  $C_5 = C_5(\overline{\rho_0},\underline{\rho_0},\overline{\theta_0},C_0)>0$ such that
\begin{equation}\label{C8}
    \sup_{[0,T]}\int_0^1 (\partial_x u)^2\leq C_3,
\end{equation}
\begin{equation}
\vert u\vert^2\leq C_3\quad \text{ a.e.},
\end{equation}
\begin{equation}\label{C9}
    \int_0^T \Vert\partial_x u \Vert_\infty^2 \leq C_4,
\end{equation}
\begin{equation}\label{C12}
    \int_0^T \int_0^1 (\partial_t u)^2 \leq C_5.
\end{equation}
\end{prop}
\begin{proof}
Writing
\begin{equation*}
    \partial_x u = \frac{\sigma + p}{\mu}
\end{equation*}
and using Proposition \ref{sigma}, \ref{prop:5}, we obtain immediatly \eqref{C8} and \eqref{C9}. Moreover, writing for all $x,y\in\T$
\begin{equation}
u(x) = u(y) + \int_y^x \partial_z u 
\end{equation}
then multiplying by $\rho(y)$ and integrating on the torus, we get from \eqref{eq:momtsnul}
\begin{equation}
{\cal M}_0 u(x) = \int_0^1\rho(y)\int_y^x \partial_x u 
\end{equation}
hence
\begin{equation}
\vert u(x)\vert\leq \int_0^1\vert \partial_x u\vert \leq C_3^{1/2}.
\end{equation}
Finally, we have
\begin{equation}
\partial_t u = \frac{\partial_x\sigma}{\rho} - u\partial_x u.
\end{equation}
Then, taking the square, integrating on $[0,T]\times\T$ and using some Young's inequality, we obtain
\begin{align}
\int_0^T\int_0^1(\partial_t u)^2&\leq 2\int_0^T\int_0^1 \frac{(\partial_x\sigma)^2}{\rho^2} + 2\int_0^T\Vert u\Vert_\infty^2\int_0^1 (\partial_x u)^2
\\&\leq \frac{2}{\underline{\rho}^2}C_1 + 2TC_3^2.
\end{align}
\end{proof} 
Next, we follow D. Hoff's idea by introducing the weight function 
$\kappa(t)=\min(1,t)$
which cancels at $t=0$. Let's briefly explain the principle: if $f$ is a semi-norm of space, let us compare the calculations
\begin{equation}
\int_0^t f' = f(t)-f(0)
\label{sanskappa}
\end{equation}
and 
\begin{equation}
\int_0^t \kappa f' = \kappa(t)f(t)-\int_0^{\min(1,t)} f.    
\label{aveckappa}
\end{equation}
In \eqref{sanskappa}, having a bound on $f(0)$ requires more regularity in the initial conditions. This term does not appear in \eqref{aveckappa}, and the $\int_0^{\min(1,t)} f$ term that does appear has every chance of being bounded by the estimate obtained earlier, if $\kappa'$ doesn't explode.
\begin{prop}
There exists some $C_6 = C_6(\overline{\rho_0},\underline{\rho_0},\overline{\theta_0},C_0)>0$ such that
\begin{equation*}
    \sup_{0\leq t\leq T}\kappa(t)\int_0^1 (\partial_x\sigma)^2 + \int_0^t\kappa \int_0^1 \dot{\sigma}^2\leq C_6.
\end{equation*}
\end{prop}
\begin{proof}
Multiplying \eqref{sigmapoint} by $\dot{\sigma}/\mu$, we get
\begin{equation*}
    \frac{\dot{\sigma}^2}{\mu} = \partial_x \left(\frac{\partial_x\sigma}{\rho}\right)\dot{\sigma} - \frac{\gamma}{\mu}\sigma \dot{\sigma}\partial_x u.\label{64}
\end{equation*}
then integrating in space gives us
\begin{equation}\label{dell}
    \int_0^1\frac{\dot{\sigma}^2}{\mu} = -\int_0^1 (\partial_x \sigma)\frac{\partial_x\dot{\sigma}}{\rho}- \int_0^1\frac{\gamma}{\mu}\sigma \dot{\sigma}\partial_x u.
\end{equation}
But using \eqref{switch},
\begin{equation*}
    D_t\left(\frac{\partial_x\sigma}{\rho}\right) = \frac{D_t(\partial_x\sigma)\rho - D_t(\rho) \partial_x\sigma}{\rho^2}= \frac{D_t(\partial_x\sigma) +(\partial_x u)\partial_x \sigma}{\rho}=\frac{\partial_x \dot{\sigma}}{\rho}
\end{equation*}
so by \eqref{Dtdt}
\begin{equation*}
    \int_0^1 (\partial_x\sigma)\frac{\partial_x\dot{\sigma}}{\rho} = \int_0^1 \rho \frac{\partial_x\sigma}{\rho} D_t\left(\frac{\partial_x \sigma}{\rho}\right) = \frac{1}{2}\int_0^1 \rho D_t \left(\left[\frac{\partial_x\sigma}{\rho}\right]^2\right) = \frac{1}{2}\frac{d}{dt}\int_0^1 \frac{(\partial_x\sigma)^2}{\rho}
\end{equation*}
and we can rewrite \eqref{dell} as
\begin{equation*}
    \frac{1}{2}\frac{d}{dt}\int_0^1 \frac{(\partial_x\sigma)^2}{\rho}+\int_0^1\frac{\dot{\sigma}^2}{\mu}  = - \int_0^1\frac{\gamma}{\mu}\sigma \dot{\sigma}\partial_x u.
\end{equation*}
By Young's inequality we obtain for a certain constant $H_5>0$
\begin{equation*}
    \frac{1}{2}\frac{d}{dt}\int_0^1 \frac{(\partial_x\sigma)^2}{\rho}+\int_0^1\frac{\dot{\sigma}^2}{\mu}  \leq H_5\Vert\partial_x u\Vert_\infty^2(t) \int_0^1 \sigma^2 + \frac{1}{2}\int_0^1 \frac{\dot{\sigma}^2}{\mu}
\end{equation*}
so using \eqref{eq:Abound}
\begin{equation*}
    \frac{d}{dt}\int_0^1 \frac{(\partial_x\sigma)^2}{\rho}+\int_0^1\frac{\dot{\sigma}^2}{\mu}  \leq 2H_5\Vert\partial_x u\Vert_\infty^2(t) \int_0^1 \sigma^2\leq 2C_1H_5\Vert\partial_x u\Vert_\infty^2.
\end{equation*}
and finally, multiplying by $\kappa(t)$ and integrating in time, by \eqref{aveckappa} we get
\begin{align*}
\kappa(t)\int_0^1 \frac{(\partial_x\sigma)^2}{\rho} + \int_0^t\kappa \int_0^1 \frac{\dot{\sigma}^2}{\mu} &\leq 2C_1H_5\int_0^t\kappa\Vert \partial_x u\Vert_\infty^2 + \int_0^{\min(1,t)}\int_0^1 \frac{(\partial_x\sigma)^2}{\rho}
\\&\leq 2C_1C_4H_5 + C_1/\underline{\rho},
\end{align*}
hence the result.
\end{proof}


\subsection{Smooth solutions and stability}
\begin{theorem}\label{stability}
    Let $(c^n,\rho^n,\theta^n,u^n)$ some solutions "à la Hoff" of \eqref{cont}--\eqref{energ} and \eqref{crho} with initial conditions $(c^n_0,\rho^n_0,\theta^n_0,u^n_0)$. Suppose moreover that this sequence is uniformly bounded, and that there exists some $(c_0,\rho_0,\theta_0,u_0)\in L^\infty(\T)^3\times H^1(\T)$ such that 
\begin{equation*}
    c^n_0,\rho^n_0,\theta^n_0 \tend{n}{\infty}c_0,\rho_0,\theta_0 \text{ in }L^2(\T)
\end{equation*}
\begin{equation*}
    u_0^n \tendf{n}{\infty} u_0 \text{ in } H^1(\T).
\end{equation*}
Then, up to a subsequence,  $(c^n,\rho^n,\theta^n,u^n)$ converges strongly in $L^2(0,T,L^2(\T))^3\times L^2(0,T,H^1(\T))$ to some solution "à la Hoff" of \eqref{cont}--\eqref{energ} and \eqref{crho} with initial conditions $(c_0,\rho_0,\theta_0,u_0)$.
\end{theorem}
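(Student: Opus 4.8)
The plan is to exploit the uniform Hoff bounds collected in Definition \ref{defhoff} to extract converging subsequences, to upgrade the resulting weak convergences to strong ones \emph{first} for the stress tensor $\sigma^n$ and the velocity $u^n$, then to propagate strong convergence to the low-regularity transported fields $c^n,\rho^n,\theta^n$ through the Lagrangian structure \eqref{c_L}--\eqref{energint_L}, and finally to pass to the limit in the distributional formulation of \eqref{NS}. To start, from \eqref{D_UL_rho}--\eqref{D_UL_theta} the triple $(c^n,\rho^n,\theta^n)$ converges weakly-$\star$ in $L^\infty([0,T]\times\T)$ to some $(c,\rho,\theta)$ satisfying the same pointwise bounds; from \eqref{D_A1} and \eqref{sigmainf}, $\sigma^n$ is bounded in $L^\infty(0,T;L^2(\T))\cap L^2(0,T;H^1(\T))$ and, using the zero-mean normalization together with Poincar\'e, $u^n$ is bounded in $L^\infty(0,T;H^1(\T))$, so up to a subsequence $\sigma^n\tendf{}{}\sigma$ and $u^n\tendf{}{}u$ in $L^2(0,T;H^1(\T))$.

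The next step is strong convergence via Aubin--Lions. For $u^n$, the momentum relation \eqref{momts_L} gives $\dot u^n=\partial_x\sigma^n/\rho^n$, bounded in $L^2(0,T;L^2(\T))$ by \eqref{D_A1} and the lower bound on $\rho^n$; hence $\partial_t u^n=\dot u^n-u^n\partial_x u^n$ is bounded in $L^2(0,T;L^2(\T))$, and with $u^n$ bounded in $L^\infty(0,T;H^1(\T))$ one obtains $u^n\to u$ in $C([0,T];L^2(\T))$. Interpolating this with the uniform bound $\int_0^T\Vert\partial_x u^n\Vert_\infty^2\le C_9$ from \eqref{D_A1inf} (Proposition \ref{dxu}) upgrades the convergence to $u^n\to u$ in $L^2(0,T;C(\T))$, which yields uniform convergence of the associated flows $X^n\to X$. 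For $\sigma^n$, the crucial ingredient is the second Hoff estimate \eqref{secondhoff}: it controls $\dot\sigma^n$ — and, via the equation of Proposition \ref{sigma} together with $\partial_t\sigma^n=\dot\sigma^n-u^n\partial_x\sigma^n$, also $\partial_t\sigma^n$ — in $L^2$ away from $t=0$. Combined with the $L^2(0,T;H^1(\T))$ bound, Aubin--Lions gives strong convergence on every $(\delta,T)\times\T$, and the uniform $L^\infty(0,T;L^2(\T))$ bound of \eqref{D_A1} forbids concentration near $t=0$, so that $\sigma^n\to\sigma$ in $L^2((0,T)\times\T)$.

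The heart of the argument is the strong convergence of $c^n,\rho^n,\theta^n$, which carry no spatial compactness and must be treated through transport. Since $c^n$ is transported, $c^n(t,\cdot)=c_0^n(X^n(0,t,\cdot))$, and the strong convergence $c_0^n\to c_0$ in $L^2(\T)$ together with the uniform convergence of the (uniformly bi-Lipschitz) flows yields $c^n\to c$ in $L^2((0,T)\times\T)$; in particular $\mu(c^n),\gamma(c^n),\cv(c^n)$ converge strongly. Along each characteristic issuing from $y$, the pair $(\rho^n,\theta^n)$ solves the closed ODE system obtained from \eqref{cont_L}--\eqref{energint_L} by substituting $\partial_x u^n=(\sigma^n+p^n)/\mu(c^n)$, with frozen parameter $c_0^n(y)$, initial data $(\rho_0^n(y),\theta_0^n(y))$, and forcing $s\mapsto\sigma^n(s,X^n(s,0,y))$. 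Using the change of variables furnished by the uniformly bi-Lipschitz flows one shows $\sigma^n(\cdot,X^n)\to\sigma(\cdot,X)$ in $L^2$, and continuous dependence of ODE solutions on data and forcing then gives $(\rho^n,\theta^n)\to(\rho,\theta)$ strongly in $L^2((0,T)\times\T)$. With $c^n,\rho^n,\theta^n,\sigma^n$ all strongly convergent, $p^n=(\gamma(c^n)-1)\rho^n\theta^n\to p$ strongly, hence $\partial_x u^n=(\sigma^n+p^n)/\mu(c^n)\to\partial_x u$ strongly, which upgrades $u^n\to u$ to the claimed $L^2(0,T;H^1(\T))$ convergence.

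It then remains to pass to the limit: all nonlinear products ($\rho^n u^n$, $\rho^n(u^n)^2$, $\sigma^n\partial_x u^n$, $\cv(c^n)\rho^n\theta^n$, $\mu(c^n)\partial_x u^n$) converge in $L^1$ by the strong convergences above, so $(c,\rho,\theta,u)$ satisfies \eqref{cont}--\eqref{energ} and \eqref{crho} in the distributional sense with $\sigma=\mu(c)\partial_x u-p(c,\rho,\theta)$, the initial data being recovered from the $C([0,T];L^2(\T))$ convergence and the flow representation. The uniform bounds \eqref{D_UL_rho}--\eqref{secondhoff} pass to the limit by weak lower semicontinuity and a.e. convergence, so the limit is a solution \`a la Hoff in the sense of Definition \ref{defhoff}. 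I expect the main obstacle to be precisely the strong convergence of $\rho^n$ and $\theta^n$: being merely $L^\infty$ with no smoothing, they cannot be handled by compactness and force one to rely on the convergence of the characteristics and of $\sigma^n$ along them — which is exactly why strong convergence of both $u^n$ and $\sigma^n$ must be secured first, and why the degenerate-at-$t=0$ estimate \eqref{secondhoff} is indispensable here, in contrast with the barotropic case where a single energy level sufficed.
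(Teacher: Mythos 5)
Your proposal is correct in its overall skeleton (uniform bounds, weak limits, strong convergence of $u^n$ and $\sigma^n$ first, then of the transported fields, then passage to the limit), and your treatment of $c^n$ via the flow representation $c^n(t,\cdot)=c_0^n(X^n(0,t,\cdot))$ is exactly the paper's Step 2. But for the central difficulty --- the strong convergence of $\rho^n$ and $\theta^n$ --- you take a genuinely different route. The paper stays in Eulerian variables: it writes the renormalized equation for $\beta(r^n)$ with $r^n=\rho^n\theta^n$ and $\beta(s)=s\ln s$, passes to the limit in both the renormalized and the original equations, and uses the convexity of $s\ln s$, $s^2$ and $-\ln s$ together with a Gr\"onwall argument to show $\overline{r\ln r}=\overline{r}\ln\overline{r}$, whence strong convergence by Taylor--Lagrange; the same $\rho\ln\rho$ trick then handles $\rho^n$. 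You instead exploit the fact that, along characteristics, $(\rho^n,\theta^n)$ solves a closed ODE system with frozen coefficients $\mu(c_0^n(y)),\gamma(c_0^n(y)),\cv(c_0^n(y))$ and forcing $\sigma^n(\cdot,X^n(\cdot,0,y))$, and conclude by continuous dependence. This is legitimate: the Lipschitz-in-state constant of the ODE is $C(1+|\sigma^n|+|\sigma^n|^2)$, integrable in time for a.e.\ label $y$ by the change of variables with Jacobian $\rho^n_0/\rho^n$ bounded above and below, and the forcing converges in $L^1_t$ for a.e.\ $y$ along a further subsequence; dominated convergence in $y$ then gives $L^2((0,T)\times\T)$ convergence. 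Your approach is arguably more direct once the flow convergence is secured, and it is consonant with the Duhamel representations \eqref{duhamrhot}--\eqref{duhamelthetat} that the paper itself uses in the uniqueness proof; the paper's renormalization argument avoids characteristics entirely and is the one that generalizes to settings where the system does not close along trajectories. Note also that your compactness argument for $\sigma^n$ (the weighted estimate \eqref{secondhoff} plus Aubin--Lions away from $t=0$ and the uniform $L^\infty_tL^2_x$ bound near $t=0$) is the one the paper deploys in Section \ref{averaging} for \eqref{strongconvsigmaeps}, whereas in the stability theorem the paper uses the unweighted $H^{-1}$--$H^1$ duality trick on $\sigma^n/\mu^n$; both work. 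The only place where you should be a bit more careful in a full write-up is the claim that $\sigma^n(\cdot,X^n(\cdot,0,\cdot))\to\sigma(\cdot,X(\cdot,0,\cdot))$ in $L^2$: this needs the same density-of-continuous-functions argument as the paper's Step 2, splitting off $(\sigma^n-\sigma)\circ X^n$ (controlled by the change of variables) from $\sigma\circ X^n-\sigma\circ X$ (controlled by mollifying $\sigma$ and using the uniform convergence of the flows).
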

\begin{proof}
    Using uniform bounds \eqref{upperrho}, \eqref{uppertheta}, \eqref{C8}, after extracting, there exists some 
    
    $(c,\rho,\theta,u)\in L^2(0,T,L^2(\T))^3\times L^2(0,T,H^1(\T))$ such that
\begin{equation}\label{weakconvcrt}
    c^n,\rho^n,\theta^n \tendf{n}{+\infty} c,\rho,\theta \text{ in }L^2(0,T,L^2(\T))
\end{equation}
\begin{equation}\label{weakconvu}
    u^n\tendf{n}{+\infty} u \text{ in }L^2(0,T,H^1(\T)).
\end{equation}
Passing to the limit, we obtain immediatly \eqref{D_UL_rho}-\eqref{D_A1inf}. It remains to show the strong convergence and that $(c,\rho,\theta,u)$ is a solution of \eqref{NS} in the weak sense.
\begin{itemize}
\item[Step 1:]
Let's show that 
\begin{equation}\label{XnstrongX}
    X^n\tend{n}{\infty} X \text{ in }C([0,T]^2\times \T).
\end{equation}
Writing 
\begin{equation*}
    X^n(s,t,x) = x + \int_t^s u^n(\tau,X^n(\tau,t,x))d\tau,
\end{equation*}
\begin{equation*}
    X(s,t,x) = x + \int_t^s u(\tau,X(\tau,t,x))d\tau
\end{equation*}
we get
\begin{align*}
    \vert X^n(s,t,x)&-X(s,t,x)\vert \leq \int_t^s \vert u^n(\tau,X^n(\tau,t,x)) - u(\tau,X(\tau,t,x))\vert d\tau 
    \\&\leq \int_t^s \vert u^n(\tau,X^n(\tau,t,x)) - u^n(\tau,X(\tau,t,x))\vert d\tau + \int_t^s \vert u^n(\tau,X(\tau,t,x))-u(\tau,X(\tau,t,x))\vert d\tau
    \\&\leq \int_t^s \Vert \partial_x u^n(t)\Vert_\infty \vert X^n(\tau,t,x)-X(\tau,t,x)\vert d\tau + \int_0^T\Vert u^n-u\Vert_{L^\infty(\T)}
\end{align*}
so using the Grönwall lemma
\begin{align}
    \vert X^n(s,t,x)-X(s,t,x)\vert &\leq \left(\int_0^T\Vert u^n-u\Vert_{L^\infty(\T)}\right)\exp{\int_t^s \Vert \partial_x u^n(\tau)\Vert_\infty}d\tau\nonumber
    \\&\leq \sqrt{T}e^{\sqrt{C_4T}}\left(\int_0^T\Vert u^n-u\Vert_{L^\infty(\T)}^2\right)^{1/2}.\label{unstrongu}
\end{align}
By \eqref{D_A1inf}, $(u^n)$ is Lipschitz in $C([0,T]\times\T)$. So by Ascoli's theorem, 
\begin{equation}\label{strongconvu}
    u^n \tend{n}{+\infty} u \text{ in }C([0,T]\times\T)
\end{equation}
and \eqref{unstrongu} gives \eqref{XnstrongX}.

\item[Step 2:] Now we are going to show that 
\begin{equation*}
    c^n\tend{n}{\infty} c \text{ in }L^\infty(0,T,L^1(\T)).
\end{equation*}
To do that, we will use the equality
\begin{equation*}
    c^n(t,x) = c_0^n(X^n(0,t,x)).
\end{equation*}
We will shorten $X(0,t,x)$ to $X$ and $X^n(0,t,x)$ to $X^n$ in the following.

We get
\begin{equation}\label{convc}
    \int_0^1 \vert c^n(t,x) - c_0(X)\vert \leq \int_0^1 \vert c_0^n(X^n)-c_0(X^n)\vert + \int_0^1\vert c_0(X^n) - c_0(X)\vert.  
\end{equation}
At this step, that is important to remark that for all $s,t\in [0,T]$ we have the equality 
\begin{equation}\label{changevariable}
    \int_0^1 f(X(s,t,x)) dx = \int_{X(s,t,0)}^{X(s,t,1)}f(y)\frac{\rho(s,y)}{\rho(t,X(t,s,y))}dy = \int_0^1 f(y)\frac{\rho(s,y)}{\rho(t,X(t,s,y))}dy
\end{equation}
and the same replacing $(\rho,X(s,t,x))$ by $(\rho^n, X^n(s,t,x))$.

Therefore, we infer that
\begin{equation}\label{decoup1}
    \int_0^1 \vert c_0^n(X^n)-c_0(X^n)\vert = \int_0^1 \vert c_0^n(y) - c_0(y)\vert \frac{\rho^n(s,y)}{\rho^n(t,X^n(t,0,y))}dy \leq \frac{\overline{\rho}}{\underline{\rho}} \int_0^1 \vert c_0^n(y)-c_0(y)\vert \tend{n}{\infty} 0.
\end{equation}
For the second term on the right hand side of \eqref{convc}, we mollify $c_0$, considering $c_{0,\eta}\in C^1([0,1])$ such that
\begin{equation*}
    c_{0,\eta}\tend{\eta}{0}c_0\text{ in }L^1([0,1]).
\end{equation*}
We have, using again \eqref{changevariable}
\begin{align}\label{blabla}
    \int_0^1 \vert c_0(X^n)-c_0(X)\vert &\leq \int_0^1 \vert c_0(X^n)-c_{0,\eta}(X^n)\vert + \int_0^1 \vert c_{0,\eta}(X^n)-c_{0,\eta}(X)\vert + \int_0^1 \vert c_{0,\eta}(X)-c_0(X)\vert\nonumber
    \\&\leq 2\frac{\overline{\rho}}{\underline{\rho}} \int_0^1 \vert c_0-c_{0,\eta}\vert + \Vert \partial_x c_{0,\eta}\Vert_\infty \Vert X^n-X\Vert_\infty.
\end{align}
Let $\varepsilon>0$. Let's fix $\eta>0$ such that
\begin{equation}
    2\frac{\overline{\rho}}{\underline{\rho}}\int_0^1 \vert c_0 - c_{0,\eta}\vert\leq\varepsilon.
\end{equation}
Then, there exists $N\in\N$ such that
\begin{equation}\label{blablabla}
    \forall n\geq N,\quad \Vert \partial_x c_{0,\eta}\Vert_\infty \Vert X^n-X\Vert_\infty \leq \varepsilon.
\end{equation}
So, from \eqref{blabla}--\eqref{blablabla},
\begin{equation}\label{decoup2}
    \forall n\geq N,\quad \int_0^1 \vert c_0(X^n)-c_0(X)\vert \leq 2\varepsilon.
\end{equation}
Finally, \eqref{decoup1} and \eqref{decoup2} give by \eqref{convc}
\begin{equation}\label{strongconvc}
    c^n \tend{n}{\infty} c_0(X) \text{ in }L^\infty(0,T,L^1(\T)).
\end{equation}
and by uniqueness of the limit, $c_0(X) = c$.

\item[Step 3:] Let $\sigma$ be the weak limit of $\sigma^n$ in $L^\infty(0,T,L^2(\T))\cap L^2(0,T,H^1(\T))$. We deduce from the strong convergence of $c^n$ that
\begin{equation*}
    \sigma = \mu(c)\partial_x u - p 
\end{equation*}
where $p$ is the weak limit of $R(c^n)\rho^n\theta^n$.
\begin{remark}
In the following reasoning, the writing "$\in X$" is to be understood as "uniformly bounded in $n$ in $X$".
\end{remark}
We will abbreviate $f(c^n)$ as $f^n$ for any function $f$ depending on $c^n$. From Proposition \ref{sigma} we get 
\begin{equation}\label{algebr}
    \partial_t\left(\frac{\sigma^n}{\mu^n}\right) =-\partial_x\left(u^n\frac{\sigma^n}{\mu^n}\right) + \partial_x \left(\frac{\partial_x\sigma^n}{\rho^n}\right) -\frac{\gamma^n-1}{\mu^n}\sigma^n\partial_x u^n.
\end{equation}

Let's look at each term of the right hand side of \eqref{algebr}. 

\begin{itemize}
    \item $\sigma^n\in L^\infty(0,T,L^2(\T))$, $\mu^n,u^n\in L^\infty([0,T]\times\T)$, so $\partial_x(\sigma^n/\mu^n)\in L^2(0,T,H^{-1}(\T))$. 
    \item $\partial_x\sigma^n/\rho^n\in L^2(0,T,L^2(\T))$, so $\partial_x(\partial_x\sigma^n/\rho^n)\in L^2(0,T,H^{-1}(\T))$.
    \item $\sigma^n\in L^\infty(0,T,L^2(\T))$, $\partial_x u^n\in L^2(0,T,L^\infty(\T))$ and $(\gamma^n-1)/(\mu^n)\in L^\infty([0,T]\times \T)$, so 
    \begin{equation*}
        -\frac{\gamma^n-1}{\mu^n}\sigma^n\partial_x u^n\in L^2(0,T,L^2(\T)).
    \end{equation*}
\end{itemize}
Finally,
\begin{equation*}
    \partial_t \left(\frac{\sigma^n}{\mu^n}\right)\in L^2(0,T,H^{-1}(\T)).
\end{equation*}
Moreover, because $c^n$ converges strongly, we have using \eqref{D_A1}
\begin{equation*}
    \frac{\sigma^n}{\mu^n}\tendf{n}{\infty} \frac{\sigma}{\mu} \text{ in }L^2(0,T,L^2(\T)).
\end{equation*}
So, by Aubin-Lions lemma,
\begin{equation*}
    \frac{\sigma^n}{\mu^n} \tend{n}{+\infty} \frac{\sigma}{\mu} \text{ in }L^2(0,T,H^{-1}(\T)).
\end{equation*}
But by \eqref{D_A1} we have moreover
\begin{equation*}
    \sigma^n \tendf{n}{\infty} \sigma \text{ in }L^2(0,T,H^1(\T))
\end{equation*}
hence
\begin{equation}\label{emma1}
    \int_0^T\int_0^1 \frac{(\sigma^n)^2}{\mu^n} = \int_0^T \PS{\frac{\sigma^n}{\mu^n}}{\sigma^n}_{H^{-1}\times H^1}\tend{n}{\infty} \int_0^T \PS{\frac{\sigma}{\mu}}{\sigma} =  \int_0^T\int_0^1 \frac{\sigma^2}{\mu}.
\end{equation}
But, as $c^n$ converges to $c$ strongly,
\begin{equation}\label{emma2}
    \frac{\sigma^n}{\sqrt{\mu^n}} \tendf{n}{\infty} \frac{\sigma}{\sqrt{\mu}} \text{ in }L^2(0,T,L^2(\T)).
\end{equation}
then using \eqref{emma1} and \eqref{emma2}
\begin{equation*}
    \frac{\sigma^n}{\sqrt{\mu^n}}\tend{n}{\infty}\frac{\sigma}{\sqrt{\mu}}\text{ in }L^2(0,T,L^2(\T)).
\end{equation*}
Therefore,  multiplying by $\sqrt{\mu^n}$ and using again the strong convergence of $c^n$ we get
\begin{equation}\label{strongconvsigma1}
    \sigma^n \tend{n}{\infty} \sigma \text{ in }L^2(0,T,L^2(\T)).
\end{equation}

\item[Step 4:] Using \eqref{energ}, and denoting $r^n = \rho^n\theta^n$, we get
\begin{equation}\label{didestou1}
    \partial_tr^n + \partial_x(u^n r^n) = \frac{\sigma^n\partial_x u^n}{\cv^n}
\end{equation}
with
\begin{equation}\label{didestou2}
    \sigma^n = \mu^n\partial_x u^n - R^n r^n.
\end{equation}
Hence, for all $\beta \in {\cal C}^1(\R,\R)$:
\begin{align*}
    \partial_t \beta(r^n) + \partial_x (u^n\beta(r^n)) &= \frac{\sigma^n\partial_x u^n}{\cv^n}\beta'(r^n) + (\beta(r^n)-r^n\beta'(r^n))\partial_x u^n
    \\
    \begin{split}
        &=\frac{(\sigma^n)^2}{\mu^n\cv^n}\beta'(r^n) + \frac{\sigma^n(\gamma^n-1)}{\mu^n}r^n\beta'(r^n) + (\beta(r^n)-r^n\beta'(r^n))\frac{\sigma^n}{\mu^n} \\&+ r^n(\beta(r^n)-r^n\beta'(r^n))\frac{R^n}{\mu^n}.
    \end{split}
\end{align*}
Choosing $\beta(x) = x\ln(x)$ in order to have $\beta(x)-x\beta'(x) = -x$, we get
\begin{equation}\label{rrenomarlize}
\begin{split}
     \partial_t (r^n\ln(r^n)) + \partial_x (u^n r^n\ln(r^n))
        &=\frac{(\sigma^n)^2}{\mu^n\cv^n}(\ln(r^n)+1) + \frac{\sigma^n(\gamma^n-1)}{\mu^n}r^n\ln(r^n)+ \frac{\sigma^n(\gamma^n-1-\cv^n)}{\mu^n\cv^n}r^n \\&-(r^n)^2\frac{R^n}{\mu^n}.
    \end{split}
\end{equation}
Remark that by \eqref{D_UL_rho} and \eqref{D_UL_theta}, for all continuous function $f: ]0,+\infty[\rightarrow \R$, $f(r^n)$ is bounded in $L^\infty(0,T,L^\infty(\T))$, so there exists such $\overline{f(r)}$ such that, up to a subsequence,
\begin{equation*}
    f(r^n)\tend{n}{\infty} \overline{f(r)} \text{ in }L^\infty([0,T]\times\T) \text{ weak}-\star. 
\end{equation*}
Note that we can choose some subsequence which does not depend on $f$.
Passing to the limit in \eqref{rrenomarlize}, and using \eqref{strongconvu}, \eqref{strongconvc}, and \eqref{strongconvsigma1}, we get
\begin{equation}\label{rrenomarlize2}
\begin{split}
     \partial_t (\overline{r\ln(r)}) + \partial_x (u \overline{r\ln(r)})
        &=\frac{\sigma^2}{\mu\cv}(\overline{\ln(r)}+1) + \frac{\sigma(\gamma-1)}{\mu}\overline{r\ln(r)}+ \frac{\sigma(\gamma-1-\cv)}{\mu\cv}\overline{r} \\&-\overline{r^2}\frac{R}{\mu}.
    \end{split}
\end{equation}
But passing to the limit in \eqref{didestou1} and \eqref{didestou2}, using \eqref{strongconvc}, \eqref{strongconvsigma1}, \eqref{strongconvu} and \eqref{weakconvu}, we get
\begin{equation*}
    \partial_t \overline{r} + \partial_x(u \overline{r}) = \frac{\sigma\partial_x u}{\cv^n}
\end{equation*}
\begin{equation*}
    \sigma = \mu\partial_x u - R\overline{r}.
\end{equation*}
So, by the same reasoning,
\begin{equation}\label{rrenormalize3}
    \begin{split}
     \partial_t (\overline{r}\ln(\overline{r})) + \partial_x (u \overline{r}\ln(\overline{r}))
        &=\frac{\sigma^2}{\mu\cv}(\ln(\overline{r})+1) + \frac{\sigma(\gamma-1)}{\mu}\overline{r}\ln(\overline{r})+ \frac{\sigma(\gamma-1-\cv)}{\mu\cv}\overline{r} \\&-\overline{r}^2\frac{R}{\mu}.
    \end{split}
\end{equation}
Substracting \eqref{rrenormalize3} from \eqref{rrenomarlize2}, then integrating in space and in time, using the fact that
\begin{equation}\label{r0}
    \overline{r_0 \ln(r_0)} = \overline{r_0}\ln(\overline{r_0})
\end{equation}
we obtain
\begin{equation}\label{stabilitykey}
    \begin{split}
        \int_0^1 \overline{r\ln(r)}-\overline{r}\ln(\overline{r}) &= \int_0^t\int_0^1 \frac{\sigma^2}{\mu\cv}(\overline{\ln(r)}-\ln(\overline{r})) + \int_0^t\int_0^1 \frac{\sigma R}{\mu\cv}(\overline{r\ln(r)}-\overline{r}\ln(\overline{r}))
        \\&- \int_0^t\int_0^1 \frac{R}{\mu}(\overline{r^2}-\overline{r}^2).
    \end{split}
\end{equation}
The functions $s\mapsto s\ln(s)$, $s\mapsto s^2$ and $s\mapsto -\ln(s)$ being convex, we have
\begin{equation*}
    \overline{r\ln(r)}\geq \overline{r}\ln(\overline{r}),\quad \overline{r^2}\geq \overline{r}^2,\quad -\overline{\ln(r)}\geq -\ln(\overline{r}).
\end{equation*}
Therefore we deduce from \eqref{stabilitykey}
\begin{align*}
    \int_0^1 \vert \overline{r\ln(r)}-\overline{r}\ln(\overline{r}) \vert &\leq \int_0^t\int_0^1 \frac{\sigma R}{\mu} \vert \overline{r\ln(r)} -\overline{r}\ln(\overline{r})\vert \\&\leq \frac{R_{max}}{\mu_{min}} \int_0^t \Vert \sigma(s)\Vert_{L^\infty(\T)} \int_0^1 \vert \overline{r\ln(r)}-\overline{r}\ln(\overline{r})\vert.
\end{align*}
So, by \eqref{r0} and Grönwall's lemma, we get
\begin{equation}\label{argh}
    r^n\ln r^n \tend{n}{\infty}\overline{r}\ln \overline{r} \text{ in }L^\infty(0,T,L^1(\T)).
\end{equation}
But by Taylor-Lagrange, for all $n\in\N$ there exists some $\xi^n \in ]\min(\overline{r},r^n),\max(\overline{r},r^n)[$ such that
\begin{equation*}
    r^n\ln r^n - \overline{r}\ln \overline{r} - (1 + \ln \overline{r})(r^n-\overline{r}) = \frac{1}{2\xi^n} (r^n-\overline{r})^2 \geq \frac{1}{2\overline{\rho}\overline{\theta}}(r^n-\overline{r})^2.
\end{equation*}
So, integrating in space and in time, then passing to the limit and using \eqref{argh}, we get
\begin{equation*}
    r^n \tend{n}{\infty}\overline{r} \text{ in }L^2(0,T,L^2(\T)).
\end{equation*}
hence 
\begin{equation}\label{peine}
    p^n \tend{n}{\infty} p \text{ in }L^2(0,T,L^2(\T)).
\end{equation}
\item[Step 5:] Using the strong convergence of $\sigma^n$ and $c^n$, we deduce from \eqref{peine}
\begin{equation}\label{partialxun}
    \partial_x u^n \tend{n}{\infty} \partial_x u \text{ in }L^2(0,T,L^2(\T)).
\end{equation}
Using \eqref{momts} we have
\begin{equation}\label{rhonlogrhon}
    \rho^n D_t (\ln\rho^n) = -\rho^n\partial_x u^n 
\end{equation}
so integrating in space then in time,
\begin{equation*}
    \int_0^1 \rho^n\ln\rho^n = \int_0^1\rho_0^n\ln\rho_0^n -\int_0^T\int_0^1 \rho^n\partial_x u^n.
\end{equation*}
Passing to the limit, using \eqref{partialxun}, \eqref{weakconvcrt} and the strong convergence of $\rho_0^n$,
\begin{equation*}
    \int_0^1 \overline{\rho\ln\rho} = \int_0^1 \rho_0\ln\rho_0 - \int_0^T\int_0^1 \rho \partial_x u.
\end{equation*}
We can now pass to the limit in equality
\begin{equation*}
    \int_0^1 (\rho^n\ln(\rho^n) -\rho\ln(\rho)) = \int_0^1 (\rho^n_0\ln(\rho^n_0) - \rho_0\ln(\rho_0)) - \int_0^T\int_0^1 (\rho^n \partial_x u^n - \rho \partial_x u).
\end{equation*}
But by \eqref{unstrongu} and \eqref{weakconvcrt}, we can pass to the limit in \eqref{momts} to obtain
\begin{equation*}
    \partial_t \rho + \partial_x (\rho u) =0.
\end{equation*}
So \eqref{rhonlogrhon} is still true, and the same reasoning gives
\begin{equation*}
    \int_0^1 \rho\ln\rho = \int_0^1 \rho_0\ln\rho_0 - \int_0^T\int_0^1 \rho\partial_x u. 
\end{equation*}
Therefore,
\begin{equation*}
    \int_0^1\rho^n\ln\rho^n \tend{n}{\infty} \int_0^1 \rho\ln\rho
\end{equation*}
and using Taylor-Lagrange again we obtain
\begin{equation*}
\rho^n \tend{n}{\infty} \rho\text{ in }L^2(0,T,L^2(\T)).
\end{equation*}
\end{itemize}
\end{proof}

\subsection{Uniqueness}

\begin{theorem}
Let $(c_0,\rho_0,\theta_0,u_0)\in L^\infty(\T)^3\times H^1(\T)$ such that there exists some $\underline{\rho_0},\overline{\rho_0},\underline{\theta_0},\overline{\theta_0}>0$ such that

\begin{equation*}
    \forall x\in\T,\quad \underline{\rho_0}\leq \rho_0(x)\leq\overline{\rho_0},\quad \underline{\theta_0}\leq \theta(x)\leq \overline{\theta_0}, \quad 0\leq c_0(x)\leq 1.
\end{equation*}
Then there exists at most one solution "à la Hoff" of \eqref{cont}--\eqref{energ} and \eqref{crho} with initial conditions $(c_0,\rho_0,\theta_0,u_0)$.
\end{theorem}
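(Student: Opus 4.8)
The plan is to pass to Lagrangian mass coordinates, in which the transported color function becomes a fixed, time-independent coefficient, and then to close a single weighted $L^2$ energy estimate on the difference of two solutions via Grönwall's lemma.

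\emph{Reduction.} Let $(c_i,\rho_i,\theta_i,u_i)$, $i=1,2$, be two solutions à la Hoff with the same initial data. Since $\partial_x u_i\in L^2(0,T;L^\infty(\T))$, each $u_i$ is Lipschitz in space, so its flow is well defined and the Eulerian--Lagrangian change of variables is licit. Passing to the mass coordinate $m=\int_0^x\rho_0$ (the same for both solutions, and constant along trajectories by conservation of mass) and writing $\partial_m=\tfrac1\rho\partial_x$, the specific volume $v_i=1/\rho_i$ solves $\partial_t v_i=\partial_m u_i$, the momentum equation becomes $\partial_t u_i=\partial_m\sigma_i$, the internal energy equation becomes $\cv\,\partial_t\theta_i=\sigma_i\,\partial_m u_i$, and $\sigma_i=\tfrac{\mu}{v_i}\partial_m u_i-(\gamma-1)\tfrac{\theta_i}{v_i}$. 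The decisive point is that in these coordinates $c_i(t,m)=c_0(m)$ is independent of time and of $i$; hence $\mu=\mu(c_0(m))$, $\gamma=\gamma(c_0(m))$ and $\cv=\cv(c_0(m))$ are the \emph{same} fixed functions of $m$ for both solutions, and $c_1-c_2$ drops out of the analysis entirely.

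\emph{Energy estimate.} Set $\delta u=u_1-u_2$, $\delta v=v_1-v_2$, $\delta\theta=\theta_1-\theta_2$, all vanishing at $t=0$, and expand
\[
\delta\sigma=\frac{\mu}{v_1}\partial_m\delta u-\frac{\mu\,\partial_m u_2}{v_1v_2}\delta v-\frac{\gamma-1}{v_1}\delta\theta+\frac{(\gamma-1)\theta_2}{v_1v_2}\delta v.
\]
Testing $\partial_t\delta u=\partial_m\delta\sigma$ against $\delta u$ produces the dissipation $-\int\frac{\mu}{v_1}(\partial_m\delta u)^2$, which is coercive since $\mu/v_1\ge\mu_{min}\,\underline\rho>0$ by \eqref{D_UL_rho}; every remaining contribution is handled by Cauchy--Schwarz and Young, absorbing a small multiple of $\|\partial_m\delta u\|_{L^2}^2$ into the dissipation and leaving terms controlled by $(1+\|\partial_m u_2\|_\infty^2)(\|\delta v\|^2+\|\delta\theta\|^2)$. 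Testing $\partial_t\delta v=\partial_m\delta u$ against $\delta v$, and $\cv\,\partial_t\delta\theta=\sigma_1\partial_m\delta u+\delta\sigma\,\partial_m u_2$ against $\delta\theta$, is done the same way; the only new multipliers are $\|\sigma_1\|_\infty$ and $\|\partial_m u_2\|_\infty$ acting on $\|\delta v\|^2+\|\delta\theta\|^2$, where one uses $\underline\theta\le\theta_2\le\overline\theta$ from \eqref{D_UL_theta} and $\cv\ge\cv_{min}>0$, together with $\|\partial_m u_2\|_\infty\le\tfrac1{\underline\rho}\|\partial_x u_2\|_\infty$.

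Summing the three estimates and choosing the Young parameters small yields $\tfrac{d}{dt}\mathcal{E}\le C(t)\,\mathcal{E}$ for $\mathcal{E}=\|\delta u\|_{L^2}^2+\|\delta v\|_{L^2}^2+\|\delta\theta\|_{L^2}^2$, where $C(t)=C\bigl(1+\|\partial_m u_2\|_\infty^2+\|\sigma_1\|_\infty^2\bigr)$ lies in $L^1(0,T)$ thanks to the Hoff bounds \eqref{D_A1inf} (constants $C_6$, $C_9$) and the uniform bounds on $\rho$ and $\theta$. As $\mathcal{E}(0)=0$, Grönwall forces $\mathcal{E}\equiv0$, so $u_1=u_2$, $\rho_1=\rho_2$, $\theta_1=\theta_2$; the two flows then coincide, and returning to Eulerian variables gives $c_1=c_2$. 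The main obstacle is precisely what this reduction circumvents: in Eulerian variables the viscosity mismatch $(\mu(c_1)-\mu(c_2))\partial_x u_2$ cannot be estimated, since $c_1-c_2$ only lives in a negative norm (its transport equation carries the measure $\partial_x c_2$ as a source), so freezing $c=c_0(m)$ in the mass coordinate is exactly what makes the energy estimate close.
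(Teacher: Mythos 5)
Your proposal is correct and follows the same master plan as the paper: pass to Lagrangian coordinates so that the color function (hence $\mu$, $\gamma$, $\cv$) is frozen to the common initial datum and drops out of the difference estimate, then close a Gr\"onwall inequality using the Hoff bounds \eqref{D_UL_rho}, \eqref{D_UL_theta} and \eqref{D_A1inf} to guarantee that the coefficients $\Vert\partial_x u_2\Vert_\infty^2$ and $\Vert\sigma_1\Vert_\infty^2$ are integrable in time. The one place where you diverge is the treatment of $\delta\rho$ and $\delta\theta$: the paper integrates the Lagrangian mass and energy equations explicitly via Duhamel, obtaining \emph{pointwise} bounds $\vert\delta\tilde\rho\vert+\vert\delta\tilde\theta\vert\leq C\bigl(\int_0^t(\partial_x\delta\tilde u)^2\bigr)^{1/2}$, so that Gr\"onwall is run on a single functional built from $\delta\tilde u$ and its accumulated dissipation; you instead keep $\Vert\delta v\Vert_{L^2}^2$ and $\Vert\delta\theta\Vert_{L^2}^2$ as independent components of a coupled energy and test each equation separately. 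Both close: your version is the more routine parabolic-hyperbolic energy argument and avoids the explicit solution formulas, while the paper's Duhamel route gives $L^\infty$ control of $\delta\tilde\rho,\delta\tilde\theta$ by the velocity dissipation alone, which makes the absorption into the single viscous term particularly clean. Your concluding step (vanishing of the energy forces $u_1=u_2$, then $\rho,\theta$ and finally the flows and $c$ coincide) matches the paper's.
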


\begin{proof}
We choose to switch to Lagrangian coordinates in this proof. For a map $f:[0,T]\times\T \rightarrow \R$, we denote
\begin{equation*}
    \forall (t,x)\in [0,T]\times\T,\quad \widetilde{f}(t,x)= f(t,X(t,0,x)).
\end{equation*}
Using the formula
\begin{equation*}
    \partial_x X(t,0,x) = \frac{\rho_0(x)}{\rho(t,X(t,0,x))}
\end{equation*}
we get
\begin{equation*}
    \widetilde{D_t(f)} = \partial_t \widetilde{f}
\text{ and } 
    \rho_0 \widetilde{\partial_x f} = \widetilde{\rho}\partial_x \widetilde{f}.
\end{equation*}
This allows us to move from the Eulerian formulation of \eqref{NS} to the Lagrangian one: 
\begin{subequations}\label{NSlagrang}
\begin{empheq}[left=\empheqlbrace]{alignat=1}
\partial_t \tilde{c}&=0
\label{clagrang}\\
\rho_0\partial_t \left(\frac{1}{\tilde{\rho}}\right) &=\partial_x \tilde{u}
\label{contlagrang}\\
\rho_0\partial_t \Tilde{u} &= \partial_x \Tilde{\sigma}
\label{momtslagrang}\\
\rho_0\Tilde{\cv}\partial_t \Tilde{\theta} &= \Tilde{\sigma}\partial_x \Tilde{u}
\label{energlagrang}\\
\Tilde{\sigma} &= \Tilde{\mu}\frac{\Tilde{\rho}}{\rho_0}\partial_x \Tilde{u} - \Tilde{R}\Tilde{\rho}\Tilde{\theta}.
\end{empheq}
\end{subequations}
Thanks to Duhamel's formula, we can express $\Tilde{\rho}$ and $\Tilde{\theta}$ as functions of $\partial_x \Tilde{u}$. Using \eqref{contlagrang} and \eqref{energlagrang}, we obtain
\begin{equation}\label{duhamrhot}
    \tilde{\rho} = \frac{\rho_0}{1 + \int_0^t \partial_x \Tilde{u}}
\end{equation}
\begin{equation}\label{duhamelthetat}
    \Tilde{\theta} = \theta_0 \exp\left(\int_0^t -\frac{\Tilde{R}}{\Tilde{\cv}\rho_0}\Tilde{\rho}(\partial_x \Tilde{u})ds\right) + \int_0^t \frac{\Tilde{\mu}}{\Tilde{\cv}}\frac{\Tilde{\rho}}{\rho_0}(\partial_x \Tilde{u})^2 \exp\left(\int_s^t -\frac{\Tilde{R}}{\Tilde{\cv}\rho_0}\Tilde{\rho}(\partial_x \Tilde{u}) d\tau \right)ds.
\end{equation}

Let $(c_1,\rho_1,\theta_1,u_1)$ and $(c_2,\rho_2,\theta_2,u_2)$ be two solutions "à la Hoff" of \eqref{NS} with same initial conditions. Because $\partial_x u_1,\partial_x u_2\in L^1([0,T],L^\infty(\T))$, they are solutions of \eqref{NSlagrang} too.

By \eqref{clagrang}, $\Tilde{c}_1 = \Tilde{c}_2$. We will denote $\delta \Tilde{f} := \Tilde{f}_2 -\Tilde{f}_1$. All the following constants are noted $C$, even if they are different from each other. We get from \eqref{duhamrhot}
\begin{equation*}
    \delta \tilde{\rho} = -\frac{\Tilde{\rho}_1\Tilde{\rho}_2}{\rho_0}\left(\frac{\rho_0}{\Tilde{\rho}_2}-\frac{\rho_0}{\Tilde{\rho}_1}\right) = -\frac{\Tilde{\rho}_1\Tilde{\rho}_2}{\rho_0}\int_0^t \partial_x \delta \Tilde{u}
\end{equation*}
therefore
\begin{equation}\label{deltarho}
    \vert \delta \Tilde{\rho} \vert \leq \frac{\overline{\rho_1}\cdot\overline{\rho_2}}{\underline{\rho_0}}\sqrt{T} \left(\int_0^t  (\partial_x \delta \Tilde{u})^2\right)^\frac{1}{2}.
\end{equation}

Now let's bound the temperature difference. Writing $\Tilde{\theta}=:\theta_0 A_1 + A_2$ in \eqref{duhamelthetat}, we get, using the mean value theorem
\begin{align*}
\vert\delta A_1\vert &\leq \int_0^t \frac{\gamma_{max}-1}{\underline{\rho_0}}\vert\delta(\tilde{\rho}(\partial_x\tilde{u}))\vert \exp\left(\int_0^T \frac{\gamma_{max}-1}{\underline{\rho_0}}\overline{\rho}\Vert \partial_x \tilde{u}\Vert_\infty\right). 
\end{align*}
Using the formula
\begin{equation*}
    \delta(fg) = (\delta f)g_2 + f_1(\delta g)
\end{equation*}
we get

\begin{equation*}
    \begin{split}
    \vert \delta A_1\vert \leq  \left(\frac{\gamma_{max}-1}{\underline{\rho_0}} \left(\int_0^T \Vert \partial_x \tilde{u}\Vert_\infty\right)\vert \delta \tilde{\rho}\vert \right.+\frac{\gamma_{max}-1}{\underline{\rho_0}}\overline{\rho}&\left.\sqrt{T}\left(\int_0^t(\partial_x \delta \tilde{u})^2\right)^{1/2}\right)
    \\&\times\exp{\left(\int_0^T \frac{\gamma_{max}-1}{\underline{\rho_0}}\overline{\rho}\Vert \partial_x \tilde{u}\Vert_\infty\right)}
    \end{split} 
\end{equation*}
thus
\begin{equation}\label{deltaA1}
    \vert\delta A_1\vert \leq C\left(\int_0^t (\partial_x \delta \tilde{u})^2\right)^{1/2}.
\end{equation}

Similarly, we get
\begin{equation*}
\begin{split}
    \delta A_2 = \int_0^t \frac{\tilde{\mu}}{\Tilde{\cv}\tilde{\rho_0}}(\delta \tilde{\rho}) (\partial_x \tilde{u})^2_2 (A_1(t)_2-A_1(s)_2) &+ \int_0^t \frac{\tilde{\mu}}{\tilde{\cv}\tilde{\rho_0}}\tilde{\rho_1} (\partial_x \delta \tilde{u})(\partial_x \tilde{u}_1 + \partial_ x \tilde{u}_2)(A_1(t)_2-A_1(s)_2) \\&+ \int_0^t \frac{\tilde{\mu}}{\tilde{\cv}\tilde{\rho_0}}\tilde{\rho_1}(\partial_x \tilde{u})_1^2 (\delta A_1(t) - \delta A_2(t))
\end{split}
\end{equation*}
so using \eqref{deltaA1}
\begin{align}\nonumber
\begin{split}
    \vert \delta A_2\vert &\leq \vert \delta \tilde{\rho}\vert \int_0^T \frac{\mu_{max}}{\cv_{min}\underline{\rho_0}}(\partial_x \tilde{u})_2^2(\vert A_1(t)_2\vert + \vert A_1(s)_2\vert) \\&+ \frac{2\mu_{max}}{\cv_{min}\underline{\rho_0}}\overline{\rho}\left(\int_0^t (\partial_x \delta \tilde{u})^2\right)^{1/2}\left(\int_0^T((\partial_x \tilde{u}_1)^2+(\partial_x \tilde{u}_2)^2)(A_1(t)_2^2 + A_1(s)_2^2)\right)^{1/2}
    \\&+ 2C \left(\int_0^t (\partial_x \delta \tilde{u})^2\right)^{1/2} \frac{\mu_{max}}{\cv_{min}\underline{\rho_0}}\overline{\rho}\int_0^T (\partial_x \tilde{u}_1)^2
\end{split}
\\&\leq C'\left(\int_0^t (\partial_x \delta \tilde{u})^2\right)^{1/2}.\label{deltaA2}
\end{align}
Therefore, \eqref{deltaA1} and \eqref{deltaA2} give
\begin{equation}\label{deltatheta}
    \vert \delta \Tilde{\theta} \vert \leq (\overline{\theta_0}C + C')\left(\int_0^t (\partial_x \delta \Tilde{u})^2\right)^\frac{1}{2}.
\end{equation}
From \eqref{momtslagrang} we deduce that
\begin{equation*}
    \rho_0 \partial_t \delta \Tilde{u} = \partial_x \delta \Tilde{\sigma} = \partial_x(\Tilde{\mu} \frac{\Tilde{\rho}_1}{\rho_0}\partial_x \delta\Tilde{u}) +  \partial_x \left(\Tilde{\mu} \frac{\delta \Tilde{\rho}}{\rho_0}\partial_x \Tilde{u}_2 -\Tilde{R}\Tilde{\theta}_1\delta\Tilde{\rho} - \Tilde{R}\Tilde{\rho}_2\delta \Tilde{\theta} \right).
\end{equation*}
Multiplying by $\delta \Tilde{u}$ and integrating by parts, we get
\begin{equation*}
    \frac{d}{dt}\int_0^1 \rho_0 (\delta \Tilde{u})^2 + \int_0^1 \Tilde{\mu}\frac{\Tilde{\rho}_1}{\rho_0}(\partial_x \delta \Tilde{u})^2 = -\int_0^1 \left(\Tilde{\mu} \frac{\delta \Tilde{\rho}}{\rho_0}\partial_x \Tilde{u}_2 -\Tilde{R}\Tilde{\theta}_1\delta\Tilde{\rho} - \Tilde{R}\Tilde{\rho}_2\delta \Tilde{\theta} \right) \partial_x \delta \Tilde{u}
\end{equation*}
then by Young's inequality,
\begin{equation*}
\begin{split}
     \frac{d}{dt}\int_0^1 \rho_0 (\delta \Tilde{u})^2 + \int_0^1 \Tilde{\mu}\frac{\Tilde{\rho}_1}{\rho_0}(\partial_x \delta \Tilde{u})^2\leq \frac{1}{2} \int_0^1 \tilde{\mu}\frac{\tilde{\rho_1}}{\rho_0}(\partial_x\delta \Tilde{u})^2 &+ \frac{2\overline{\rho_0}}{\underline{\rho_1}\mu_{min}} \left(\frac{\mu_{max}^2}{\underline{\rho_0}^2}\Vert \partial_x \tilde{u}_2\Vert_\infty^2 + R_{max}^2\overline{\theta_1}^2\right)\int_0^1 (\delta \tilde{\rho})^2 
     \\&+ \frac{\overline{\rho_0}}{\underline{\rho_1}\mu_{min}}R_{max}^2\overline{\rho}_2^2\int_0^1 (\delta \tilde{\theta})^2.
\end{split}
\end{equation*}
Finally, using \eqref{deltarho} and \eqref{deltatheta}, we get
\begin{equation}\label{uniquenessgronwfin}
    \frac{d}{dt}\int_0^1 \rho_0 (\delta \Tilde{u})^2 + \frac{1}{2}\int_0^1 \Tilde{\mu}\frac{\Tilde{\rho}_1}{\rho_0}(\partial_x \delta \Tilde{u})^2 \leq \frac{C''}{2}(1+\Vert \partial_x \tilde{u}_2\Vert_\infty^2)\int_0^t\int_0^1 \tilde{\mu}\frac{\tilde{\rho_1}}{\rho_0}(\partial_x \delta \tilde u)^2. 
\end{equation}
Denoting
\begin{equation*}
    y(t) := \int_0^1 \rho_0 (\delta \Tilde{u})^2 + \frac{1}{2}\int_0^t\int_0^1 \Tilde{\mu}\frac{\Tilde{\rho}_1}{\rho_0}(\partial_x \delta \Tilde{u})^2 
\end{equation*}
From \eqref{uniquenessgronwfin} we deduce
\begin{equation*}
    y'(t)\leq \frac{C''}{2}(1+\Vert \partial_x u_2\Vert_\infty)y(t).
\end{equation*}

Since $\Vert\partial_x\Tilde{u}_2\Vert_{L^\infty}$ is bounded in $L^2([0,T])$, we deduce by Gr\"onwall
\begin{equation*}
    \int_0^1 \rho_0 (\delta \Tilde{u})^2 + \frac{1}{2}\int_0^T\int_0^1 \Tilde{\mu}\frac{\Tilde{\rho}_1}{\rho_0} (\partial_x\delta \Tilde{u})^2 \leq 0.
\end{equation*}
Therefore $\Tilde{u}_1=\Tilde{u}_2$. Hence by \eqref{deltarho} and \eqref{deltatheta}, $\Tilde{\rho}_1 = \Tilde{\rho}_2$ and $\Tilde{\theta}_1 = \Tilde{\theta}_2$. Moreover, using the equality
\begin{equation*}
    \forall (t,x)\in[0,T]\times\T,\quad X_1(t,0,x) = x + \int_0^t u_1(s,X_1(s,0,x)) ds  = x + \int_0^t u_2(s,X_2(s,0,x)) = X_2(t,0,x)
\end{equation*}
we get, inverting in space at fixed $t>0$
\begin{equation*}
    \forall (t,x)\in [0,T]\times\T,\quad X_1(0,t,x) = X_2(0,t,x).
\end{equation*}
we have, for $f=c,\rho,\theta,u$:
\begin{equation*}
    \forall (t,x)\in [0,T]\times\T,\quad f_1(t,x) = \tilde{f_1}(t,X_1(0,t,x)) = \tilde{f_2}(t,X_2(0,t,x)) = f_2(t,x). 
\end{equation*}
So $(c_1,\rho_1,\theta_1,u_1) = (c_2,\rho_2,\theta_2,u_2)$.
\end{proof}

\section{Averaging procedure}\label{averaging}

This section is dedicated to the proof of the convergence of the unknowns (Theorem \ref{Theorem_main1}), obtaining the kinetic equation (Theorem \ref{Theorem_main2}), and then the uniqueness of the solution for the kinetic equation and the construction of a particular solution, to establish Theorem \ref{Theorem_main3}.

 \subsection{Convergence of the unknowns using the uniform bounds}

 Fix $T>0$. Consider a sequence of uniformly bounded initial conditions $(c_{0}^{\varepsilon},\rho_{0}^{\varepsilon},\theta
_{0}^{\varepsilon},u_{0}^{\varepsilon})_{\varepsilon}$, $(c_{0},\rho
_{0},\theta_{0},u_{0})\in L^{\infty}(\T)^{3}\times H^{1}(\T)$ verifying \eqref{initialcond} with uniform bounds and
$\left(  \text{\ref{ini_2}}\right)  $. For all $\varepsilon>0,$ consider
$(c^{\varepsilon},\rho^{\varepsilon},\theta^{\varepsilon},u^{\varepsilon})$
the solution "\`{a}~la Hoff" of \eqref{NS} with initial data $(c_{0}%
^{\varepsilon},\rho_{0}^{\varepsilon},\theta_{0}^{\varepsilon},u_{0}%
^{\varepsilon})$. Then, the existence of $(c,\rho,\theta)\in L^{\infty
}([0,T]\times\T)^{3},$ $\sigma\in L^{\infty}(0,T,L^{2}(\T))$ and $u\in
H^{1}([0,T]\times\T)$ along with relations \eqref{faibleconv} and \eqref{ineglim} follow immediately by using the uniform bounds granted by \eqref{D_UL_rho}, \eqref{D_UL_theta} which are verified by solutions "\`{a}~la Hoff". From \eqref{D_A1inf} we deduce using Rellich's theorem that \eqref{forteconv} holds.
The strong convergence of $\sigma^\varepsilon$ is a consequence of \eqref{secondhoff}. Using \eqref{D_A1}, $\sigma^\varepsilon$ is bounded in $L^\infty(0,T,L^2(\T))$. In particular, there exists some $\sigma\in L^\infty(0,T,L^2(\T))$ such that 
\begin{equation*}
    \sigma^\varepsilon \tend{\varepsilon}{0} \sigma \text{ in }L^\infty(0,T,L^2(\T))\text{ weakly}-\star.
\end{equation*}
In the following reasoning, the writing "$\in X$" is to be understood as "uniformly bounded in $\varepsilon$ in $X$". 
Let's consider the equality
\begin{equation}\label{siousiou}
    \partial_t(\kappa \sigma^\varepsilon) = \kappa\partial_t\sigma^\varepsilon + \kappa'\sigma^\varepsilon = \kappa \dot{\sigma^\varepsilon} -u^\varepsilon\kappa\partial_x\sigma^\varepsilon+ \kappa'\sigma^\varepsilon.
\end{equation}

\begin{itemize}
    \item As $u^\varepsilon\in L^\infty(0,T,L^\infty(\T))$ and $\partial_x\sigma^\varepsilon\in L^2(0,T,L^2(\T))$, we get $u^\varepsilon\kappa\partial_x\sigma^\varepsilon\in L^2(0,T,L^2(\T))$.
    \item Similarly, we have $\sigma^\varepsilon\in L^2(0,T,L^2(\T))$, so $\kappa'\sigma^\varepsilon\in L^2(0,T,L^2(\T))$.
    \item Finally, using \eqref{secondhoff}, we get $\sqrt{\kappa} \dot{\sigma^\varepsilon}\in L^2(0,T,L^2(\T))$. So $\kappa \dot{\sigma^\varepsilon}\in L^2(0,T,L^2(\T))$.
\end{itemize}
Therefore, by \eqref{siousiou}, $\partial_t(\kappa\sigma^\varepsilon)\in L^2(0,T,L^2(\T))$. Because $\kappa\sigma^\varepsilon\in L^2(0,T,L^2(\T))$, using the Aubin-Lions lemma we get

\begin{equation}\label{convpoids}
    \kappa \sigma^\varepsilon \tend{\varepsilon}{0} \kappa \sigma \text{ in }L^2(0,T,L^2(\T)).
\end{equation}

Let $\eta \in ]0,1[$. Then

\begin{align}
     \int_0^T\int_0^1 \vert\sigma^\varepsilon - \sigma\vert^2  &= \int_0^\eta \int_0^1 \vert \sigma^\varepsilon - \sigma\vert^2 + \int_\eta^T\int_0^1 \vert \sigma^\varepsilon-\sigma\vert^2\nonumber \\&\leq 2\eta \left(\sup_{[0,T]}\int_0^1 (\sigma^\varepsilon)^2 + \sup_{[0,T]}\int_0^1 \sigma^2\right) + \frac{1}{\eta^2}\int_\eta^T \kappa^2 \int_0^1 \vert \sigma^\varepsilon - \sigma \vert^2\nonumber
     \\&\leq 4C_1\eta + \frac{1}{\eta^2}\int_0^T\int_0^1 \vert \kappa \sigma^\varepsilon - \kappa\sigma\vert^2.\label{convfine}
\end{align}
Let us now use \eqref{convpoids}: let $\varepsilon_0>0$ such that

\begin{equation*}
\forall \varepsilon\in]0,\varepsilon_0[,\quad \int_0^T\int_0^1 \vert \kappa\sigma^\varepsilon - \kappa\sigma\vert^2 \leq \eta^3.
\end{equation*}
Then from \eqref{convfine}

\begin{equation*}
\forall \varepsilon\in ]0,\varepsilon_0[,\quad \int_0^T\int_0^1 \vert \sigma^\varepsilon-\sigma\vert^2 \leq (4C_1+1)\eta
\end{equation*}
hence the result. This concludes the proof of \eqref{strongconvsigmaeps} and with it the first part of Theorem \ref{Main2.1}.

As explained in Remark \ref{Main2.1}, the form \eqref{system_general} is obtained using the definition of the Young measures. 

\subsection{Kinetic equation}

In this section we show that the limiting behaviour of the sequence of solutions can be described by the equation \eqref{Young}. We begin with the following

\begin{lemma} Let $\nu^\varepsilon$ be defined in Theorem \ref{Theorem_main2}. Then 
$\nu^\varepsilon\tend{\varepsilon}{0}\nu$ in $C(0,T,\cM(\T\times K))$.
\end{lemma}

\begin{proof}
All we need is to prove that for all $(\varphi,\beta)\in C(\T)\times C(K)$, 
\begin{equation}\label{strongconvnu}
     \ps{\nu^\varepsilon}{\varphi\otimes\beta} \tend{\varepsilon}{0}\ps{\nu}{\varphi\otimes\beta} \text{ in }C([0,T]).
\end{equation}
where $(\nu^\varepsilon,\nu): [0,T]\rightarrow \cM(\T\times\K)^2, t\mapsto (\nu_t^\varepsilon,\nu_t)$.
Let $\varphi\in C^1(\T),\beta\in C^1(K)$. Then abbreviating $\beta(c^\varepsilon,\rho^\varepsilon,\theta^\varepsilon)$ by $\beta^\varepsilon$, we get
\begin{align}
D^\varepsilon_t\beta^\varepsilon &= D_tc^\varepsilon \partial_c\beta^\varepsilon + D_t\rho^\varepsilon\partial_\rho\beta^\varepsilon + D_t\theta^\varepsilon \partial_\theta\beta^\varepsilon\nonumber
    \\&=-\rho^\varepsilon(\partial_x u^\varepsilon)\partial_\rho\beta^\varepsilon + \frac{\sigma^\varepsilon\partial_x u^\varepsilon}{\rho^\varepsilon\cv^\varepsilon}\partial_\theta\beta^\varepsilon.\label{startkineq}
\end{align}
In particular, $D_t^\varepsilon\beta^\varepsilon$ is bounded in $L^\infty(0,T,L^1(\T))$, uniformly in $\varepsilon$. But
\begin{align*}
    \frac{d}{dt}\ps{\nu^\varepsilon}{\varphi\otimes\beta} &= \frac{d}{dt}\int_0^1 \beta^\varepsilon\varphi(x)dx = \int_0^1 \frac{d}{dt}\beta^\varepsilon\varphi(x)dx
    \\&= \int_0^1 (D_t^\varepsilon\beta^\varepsilon)\varphi dx - \int_0^1 u^\varepsilon (\partial_x \beta^\varepsilon)\varphi
    \\&=\int_0^1 (D_t^\varepsilon\beta^\varepsilon)\varphi dx + \int_0^1 \beta^\varepsilon (\partial_x u^\varepsilon)\varphi dx + \int_0^1 \beta^\varepsilon u^\varepsilon\varphi'dx.
\end{align*}
Thus $\frac{d}{dt}\ps{\nu^\varepsilon}{\varphi\otimes \beta}$ is bounded in $L^\infty([0,T])$ uniformly in $\varepsilon$. By Ascoli, we obtain \eqref{strongconvnu}. As $C^1(\T)\times C^1(K)$ is dense in $(C(\T)\times C(K),\Vert\cdot\Vert_\infty)$, we get the result using a simple density argument.
\end{proof}

Concerning the Young measures associated with the initial data, by construction, we have
\begin{equation*}
    \nu_{0,x}^\varepsilon = c_0^\varepsilon(x)\delta_{(1,\rho_{0,+}(x),\theta_{0,+}(x))} + (1-c^\varepsilon_0(x))\delta_{(0,\rho_{0,-}(x),\theta_{0,-}(x))}\quad a.e.\quad  x\in \T,
\end{equation*}
so taking the limit as $\varepsilon\rightarrow 0$,
\begin{equation*}
    \nu_{0,x} = \alpha_{0,+}(x)\delta_{(1,\rho_{0,+}(x),\theta_{0,+}(x))} + \alpha_{0,-}(x)\delta_{(0,\rho_{0,-}(x),\theta_{0,-}(x))}\quad a.e.\quad x\in \T
\end{equation*}
where $\alpha_{0,+}, \alpha_{0,-}$ are the weak limit of $c_0^\varepsilon$, $(1-c_0^\varepsilon)$. Next, let us show how to obtain the equation \eqref{KinEq}. In order to do this recall that for almost every $(t,x)\in [0,T]\times \T$, the measures $\nu_{t,x} \in {\cal P}(K)$ are characterized by
\begin{equation*}
    \forall \beta\in C(K),\quad \ps{\nu_{t,x}}{\beta} = \overline{\beta(c,\rho,\theta)}, 
\end{equation*}
see Remark \ref{Main2.1}. Let $\beta\in C^1(K)$. Imitating computations we did in Section \ref{formderiv}, starting with \eqref{startkineq} and writing $D_t\beta^\varepsilon = \partial_t \beta^\varepsilon + \partial_x(u^\varepsilon \beta^\varepsilon) - (\partial_x u^\varepsilon) \beta^\varepsilon$, we get
\begin{equation*}
    \partial_t \beta^\varepsilon + \partial_x (u^\varepsilon\beta^\varepsilon) - \frac{\sigma^\varepsilon + p^\varepsilon}{\mu^\varepsilon}\beta^\varepsilon + \rho^\varepsilon\frac{\sigma^\varepsilon + p^\varepsilon}{\mu^\varepsilon}\partial_\rho \beta^\varepsilon - \frac{\sigma^\varepsilon(\sigma^\varepsilon + p^\varepsilon)}{\cv^\varepsilon \mu^\varepsilon\rho^\varepsilon} \partial_\theta \beta^\varepsilon = 0
\end{equation*}
i.e.
\begin{equation}\label{niania}
\begin{split}
    \partial_t \beta^\varepsilon + \partial_x (u^\varepsilon\beta^\varepsilon) - \sigma^\varepsilon \left(\frac{\beta}{\mu(c)}\right)^\varepsilon  &- \left(\frac{p(c,\rho,\theta)\beta}{\mu(c)}\right)^\varepsilon + \sigma^\varepsilon \left(\frac{\rho\partial_\rho\beta}{\mu(c)}\right)^\varepsilon  + \left(\frac{\rho p(c,\rho,\theta)\partial_\rho\beta}{\mu(c)}\right)^\varepsilon \\& - (\sigma^\varepsilon)^2 \left(\frac{\partial_\theta\beta}{\cv(c)\mu(c)\rho}\right)^\varepsilon - \sigma^\varepsilon \left(\frac{p(c,\rho,\theta)\partial_\theta \beta}{\cv(c) \mu(c)\rho}\right)^\varepsilon=0.
\end{split}
\end{equation}
Using \eqref{forteconv} and \eqref{strongconvsigmaeps} we can pass to the limit in \eqref{niania} and obtain
\begin{equation*}
\begin{split}
    \partial_t \overline{\beta} + \partial_x (u\overline{\beta}) - \sigma \overline{\left(\frac{\beta}{\mu(c)}\right)} &- \overline{\left(\frac{p(c,\rho,\theta)\beta}{\mu(c)}\right)} + \sigma \overline{\left(\frac{\rho\partial_\rho\beta}{\mu(c)}\right)} \\& +\overline{\left(\frac{\rho p(c,\rho,\theta)\partial_\rho\beta}{\mu(c)}\right)} - \sigma^2 \overline{\left(\frac{\partial_\theta\beta}{\cv(c)\mu(c)\rho}\right)}  - \sigma \overline{\left(\frac{p(c,\rho,\theta)\partial_\theta \beta}{\cv(c) \mu(c)\rho}\right)} = 0.
\end{split}
\end{equation*}
Therefore, $\nu_{t,x}$ verify the kinetic equation \eqref{KinEq}.
This concludes the proof of Theorem \ref{Theorem_main2}.

\subsection{Uniqueness of solutions for the kinetic equation}
As explained in Section 3, the proof of Theorem \ref{Theorem_main3} is divided into 2 parts:
\begin{itemize}
\item In a first step, we show that given $u=u\left(  t,x\right)$, $\sigma=\sigma(t,x)$ having the regularity announced in Definition \ref{defhoff}, the equation $\left(  \text{\ref{KinEq}%
}\right)  $ has at most one solution;
\item in a second step, we construct a solution that verifies  $\left(
\text{\ref{KinEq}}\right)  $ and using uniqueness we conclude. 
\end{itemize}
We begin by proving uniqueness. We will give two different demonstrations.

\paragraph{First proof.} We prove the following somehow general result. The proof follows the lines of Lemma 12 in \cite{BrHi}.
\begin{lemma}\label{lemmaunicity}
Let $d>1$. Let $u\in L^1(0,T,C_c([0,1]\times\R^{d-1},\R^d))$, where $u(t,x) = (u_1(t,x),u_2(t,x))\in \R\times \R^{d-1}$, such that
$$\nabla u = \left(\begin{array}{cc}
     \partial_1 u_1  & 0\\
     \partial_1 u_2  & \nabla_2 u_2
\end{array}\right)$$
with 
\begin{equation*}
    \partial_1 u_1 \in L^1(0,T,L^\infty([0,1]\times\R^{d-1})),\quad \partial_1 u_2\in L^1(0,T,L^1([0,1]\times\R^{d-1})),\quad \nabla_2 u_2\in L^1(0,T,L^\infty([0,1]\times\R^{d-1})).  
\end{equation*}

Let now $g\in L^1(0,T,C_c([0,1]\times \R^{d-1},\R))$ such that $\partial_1 g\in L^1(0,T,L^1([0,1]\times \R^{d-1}))$ and $\nabla_2 g\in L^1(0,T,L^\infty([0,1]\times\R^{d-1}))$. 
Let $\nu \in C(0,T,\cM_c(\R^d))$ such that
\begin{equation}\label{KinEqTh}
    \partial_t \nu + \Div (u \nu) + g \nu = 0.
\end{equation}
Then $\nu$ is uniquely determined by $\nu_0$.
\end{lemma}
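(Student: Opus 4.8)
The plan is to argue by duality against solutions of the backward adjoint equation, exactly as for a linear continuity equation carrying a zeroth-order term. Since \eqref{KinEqTh} is linear in $\nu$, it suffices to prove that $\nu_0=0$ forces $\nu_t=0$ for every $t\in[0,T]$. The weak formulation of \eqref{KinEqTh}, tested against an admissible $\varphi=\varphi(t,x)$, reads
\[
\frac{d}{dt}\ps{\nu_t}{\varphi(t,\cdot)}=\ps{\nu_t}{\partial_t\varphi+u\cdot\nabla\varphi-g\varphi}.
\]
Hence, if for a fixed terminal time $\tau\in(0,T]$ and an arbitrary $\psi\in C_c^\infty$ we can produce $\varphi$ solving the backward problem $\partial_t\varphi+u\cdot\nabla\varphi-g\varphi=0$ on $[0,\tau]$ with $\varphi(\tau,\cdot)=\psi$, then $t\mapsto\ps{\nu_t}{\varphi(t,\cdot)}$ is constant, so $\ps{\nu_\tau}{\psi}=\ps{\nu_0}{\varphi(0,\cdot)}=0$. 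As $\psi$ and $\tau$ are arbitrary this yields $\nu\equiv0$.

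The construction of $\varphi$ is where the triangular structure of $\nabla u$ is decisive. Writing $x=(x_1,x_2)\in[0,1]\times\R^{d-1}$, the hypothesis $\nabla_2 u_1=0$ means $u_1=u_1(t,x_1)$, so the characteristic system
\[
\dot X_1=u_1(t,X_1),\qquad \dot X_2=u_2(t,X_1,X_2)
\]
decouples: the first line is a one-dimensional ODE whose coefficient is Lipschitz in $x_1$ by $\partial_1u_1\in L^1(0,T,L^\infty)$, hence its flow $X_1(t;\tau,\cdot)$ is well defined and bi-Lipschitz. Feeding $X_1$ into the second line, the transverse flow $X_2$ solves an ODE Lipschitz in $x_2$ by $\nabla_2u_2\in L^1(0,T,L^\infty)$, so the full flow $X(t;\tau,\cdot)$ exists, is unique, and preserves compact support. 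One then sets
\[
\varphi(t,x)=\psi\bigl(X(\tau;t,x)\bigr)\exp\!\Bigl(-\int_t^\tau g\bigl(s,X(s;t,x)\bigr)\,ds\Bigr),
\]
which formally solves the adjoint equation and is compactly supported.

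The main obstacle is that $u_2$ and $g$ are only $L^1$ in the first spatial variable ($\partial_1u_2,\partial_1g\in L^1_tL^1_x$), so $X$ and $\varphi$ need not be regular enough in $x_1$ to be inserted directly into the weak formulation. I would remove this by regularization: mollify $u$ and $g$ in space, smoothing the first component $u_1$ only in the $x_1$-variable so as to preserve the triangular structure, obtaining $u^n,g^n$ for which the transverse Lipschitz bounds on $\partial_1u_1^n$ and $\nabla_2u_2^n$ are uniform in $n$; solve the adjoint problem with $(u^n,g^n)$ to get smooth test functions $\varphi^n$; and pass to the limit. The crucial stability estimate is a Grönwall comparison for the flows $X^n\to X$, which uses only those uniform $L^\infty$-in-space bounds together with the $L^1$-convergence $u_2^n\to u_2$; this is precisely the place where the triangular structure avoids any derivative of $u_2$ in the troublesome $x_1$-direction. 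Passing to the limit in $\frac{d}{dt}\ps{\nu_t}{\varphi^n}=\ps{\nu_t}{\partial_t\varphi^n+u\cdot\nabla\varphi^n-g\varphi^n}$, with the error terms $\ps{\nu_t}{(u-u^n)\cdot\nabla\varphi^n}$ and $\ps{\nu_t}{(g^n-g)\varphi^n}$ controlled by the compact support of $\nu_t$ and the uniform bounds on $\nabla\varphi^n$, gives the conservation law and hence uniqueness. This is the renormalized, DiPerna--Lions reading of Lemma~12 of \cite{BrHi}.
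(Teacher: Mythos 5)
Your overall strategy is the same as the paper's: a duality argument against backward solutions of the adjoint transport equation, with the coefficients mollified only in the $x_1$-variable so as to preserve the triangular structure, and Gr\"onwall estimates to pass to the limit. However, there is a genuine gap at the decisive quantitative step, namely your claim that the error terms $\ps{\nu_t}{(u-u^n)\cdot\nabla\varphi^n}$ are ``controlled by the compact support of $\nu_t$ and the uniform bounds on $\nabla\varphi^n$.'' The component $\partial_1\varphi^n$ is \emph{not} uniformly bounded. Differentiating the regularized adjoint equation in $x_1$ gives
\begin{equation*}
\partial_t\partial_1\varphi^{n}+u^{n}\cdot\nabla\partial_1\varphi^{n}-(g^{n}-\partial_1u_1^{n})\partial_1\varphi^{n}=-(\partial_1u_2^{n})\cdot\nabla_2\varphi^{n}+\varphi^{n}\,\partial_1g^{n},
\end{equation*}
and the source terms involve $\partial_1u_2^{n}$ and $\partial_1g^{n}$, which are only in $L^1_tL^1_x$ for the limit coefficients; after mollification at scale $\eta$ they are of size $O(1/\eta)$ in $L^1(0,T,L^\infty)$, so Gr\"onwall only yields $\Vert\partial_1\varphi^{\eta}\Vert_\infty\leq C/\eta$. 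The same obstruction reappears in your flow-comparison formulation: estimating $u_2^{n}(t,X_1^{n},X_2^{n})-u_2^{n}(t,X_1,X_2^{n})$ requires $\partial_1u_2^{n}$, so the triangular structure does not by itself ``avoid any derivative of $u_2$ in the $x_1$-direction.''

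The paper closes this gap with an anisotropic two-scale mollification: $u_2$ and $g$ are regularized at scale $\eta$, but $u_1$ at the finer scale $\eta^2$. Since $\partial_1u_1\in L^1(0,T,L^\infty)$, this gives $\Vert u_1^{\eta}-u_1\Vert_{L^1(0,T,L^\infty)}\leq C\eta^2$, so the dangerous product $(u_1^{\eta}-u_1)\,\partial_1\varphi^{\eta}$ is $O(\eta^2)\cdot O(1/\eta)=O(\eta)\to0$, while the transverse terms are handled by the genuinely uniform bounds on $\varphi^{\eta}$ and $\nabla_2\varphi^{\eta}$. Your proposal needs this (or an equivalent quantitative matching of the blow-up rate of $\partial_1\varphi^{n}$ against the convergence rate of $u_1^{n}\to u_1$) to go through; as written, the limit passage does not close.
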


\begin{proof}
We extend $u$ and $g$ to $[0,T]\times \R^d$ by periodicity. If $\varphi\in W^{1,1}(0,T,C_c^1(\R^d))$, then (using $W^{1,1}([0,T])\hookrightarrow C([0,T])$), $\partial_t \varphi + u\cdot\nabla\varphi -g\varphi \in L^1(0,T,C(\R^d))$. Therefore, by \eqref{KinEqTh},
    \begin{equation}\label{dualeq}
        \forall t\in[0,T],\quad\forall \varphi\in W^{1,1}(0,T,C^1(\R^d)),\quad \ps{\nu}{\partial_t\varphi + u\cdot\nabla \varphi - g\varphi} = - \ps{\nu_t}{\varphi(t,\cdot)}.
    \end{equation}
    Let's fix $t\in [0,T]$ and $\psi\in C^\infty_c(]0,T[\times\R^d)$. Our aim is to show that $\ps{\nu_t}{\psi}=0$ to conclude the proof. We consider $u^\eta$ and $g^\eta$ some regularizations of $u$ and $g$ in the first space variable only. Denoting $\varphi^\eta$ a solution of
    \begin{empheq}[left=\empheqlbrace]{alignat=1}
    \partial_t\varphi^\eta + u^\eta\cdot\nabla\varphi^\eta - g^\eta\varphi^\eta &= 0 \text{ on }]0,t[\times \R^d\label{regul}\\
    \varphi^\eta(t,\cdot) &= \psi \text{ on }\R^d,\label{regulcondi}
    \end{empheq}
    it can be shown that $\varphi^\eta\in W^{1,1}(0,t,C^1(\R^d))$. Then, \eqref{dualeq}, \eqref{regul} and \eqref{regulcondi} give
    \begin{equation*}
        \ps{\nu}{(u-u^\eta)\cdot\nabla\varphi^\eta + (g^\eta-g)\varphi^\eta} = - \ps{\nu_t}{\psi}.
    \end{equation*}
It remains to show
\begin{equation}\label{dette1}
    (u^\eta - u)\cdot\nabla\varphi^\eta \tend{\eta}{0} 0 \text{ in }L^1(0,t,C_c(\R^d))
\end{equation}
\begin{equation}\label{dette2} 
(g^\eta-g)\varphi^\eta\tend{\eta}{0} 0\text{ in }L^1(0,t,C_c(\R^d)).
\end{equation}
By the Grönwall lemma, as $g^\eta$ is uniformly bounded in $L^1(0,t,L^\infty(\R^d))$, we obtain from \eqref{regul} the existence of $C>0$ independant of $\eta$ such that
\begin{equation}\label{unifphieps}
    \forall s\in ]0,t[,\quad \Vert \varphi^\eta(s,\cdot)\Vert_{L^\infty} \leq C.
\end{equation}
Then, as $g^\eta\tend{\eta}{0}g$ in $L^1(0,t,C_c([0,1]\times\R^{d-1}))$, we get \eqref{dette2}.
Applying $\nabla_2$ to \eqref{regul}, using the fact that
\begin{equation*}
    ({}^t \nabla u\nabla\varphi)_2 = {}^t \nabla_2 u_2 \nabla_2\varphi,
\end{equation*}
we get
\begin{equation*}
    \partial_t \nabla_2 \varphi^\eta + u^\eta\cdot \nabla(\nabla_2\varphi^\eta) - (g^\eta I_d - {}^t \nabla_2 u^\eta_2)\nabla_2\varphi^\eta = \varphi^\eta\nabla_2 g^\eta.
\end{equation*}
As $\nabla_2 g^\eta$, $g^\eta$ and $\nabla_2 u_2^\eta$ are bounded in $L^1(0,t,L^\infty(\T))$ uniformly in $\eta$, then using \eqref{unifphieps} we get by the Grönwall lemma 
\begin{equation*}
    \forall s\in ]0,t[,\quad \Vert \nabla_2\varphi^\eta(s,\cdot)\Vert_\infty \leq C'.
\end{equation*}
Therefore
\begin{equation*}
    \Vert(u_2^\eta-u_2)\cdot \nabla_2 \varphi^\eta\Vert_{L^1(0,T,L^\infty(\R^d))} \leq C'\Vert u_2^\eta - u_2\Vert_{L^1(0,T,C_c([0,1]\times\R^{d-1}))} \tend{\eta}{0} 0
\end{equation*}
so to obtain \eqref{dette1} it's sufficiant to show
\begin{equation}\label{cfinii}
    (u_1^\eta-u_1)\partial_1\varphi^\eta \tend{\eta}{0} 0 \text{ in }L^1(0,T,L^\infty(\R^d)).
\end{equation}
Applying $\partial_1$ to \eqref{regul}, we get
\begin{equation}\label{d1drom}
\partial_t\partial_1\varphi^\eta +u^\eta\nabla\partial_1\varphi^\eta - (g^\eta - \partial_1 u_1^\eta)\partial_1\varphi^\eta = -(\partial_1 u_2^\eta) \nabla_2\varphi^\eta + \varphi^\eta\partial_1 g^\eta.
\end{equation}
If $g^\eta$, $\partial_1 u_1^\eta$ are bounded in $L^1(0,T,L^\infty(\R^d))$ uniformly in $\eta$, it's not the case of $\partial_1 u_2^\eta$ and $\partial_1 g^\eta$. Choosing $g^\eta := g\star_1\omega^\eta$ and $u_2^\eta := u_2\star_1 \omega^\eta$, where $\omega^\eta = \eta^{-1}\omega(\eta^{-1}\cdot)$ and $\omega$ is a mollifier, we obtain
\begin{equation*}
    \partial_1 u_2^\eta = u_2\star_1 \partial_1 \omega^\eta
\end{equation*}
hence 
\begin{equation*}
    \Vert \partial_1 u_2^\eta\Vert_{L^1(0,T,L^\infty(\R^d))}\leq \Vert u_2\Vert_{L^1(0,T,L^\infty(\R^d))}\Vert \omega^\eta\Vert_{L^1(0,T,L^\infty(\R^d))}\leq \frac{C''}{\eta}.
\end{equation*}
Similarly,
\begin{equation*}
    \Vert \partial_1 g^\eta\Vert_{L^1(0,T,L^\infty(\R^d))} \leq \frac{C''}{\eta}.
\end{equation*}
Therefore, by Grönwall, \eqref{d1drom} gives
\begin{equation}\label{papapa}
    \forall s\in ]0,t[,\quad \Vert \partial_1\varphi^\varepsilon(s,\cdot)\Vert_{L^\infty(\R^d)}\leq \frac{C''}{\eta}. 
\end{equation}
We choose now $u_1^\eta:=u_1\star_1 \omega^{(\eta^2)}$.
Then,
\begin{align*}
    \vert u_1^\eta - u_1\vert(x) &= \left\vert \int_\R u_1(x-y)\omega^{(\eta^2)}(y)dy - u(x)\right\vert
    \leq \int_\R \vert u_1(x-y) - u_1(x)\vert \omega^{(\eta^2)}(y)dy
    \\&\leq \int_\R \left\vert\int_x^{x-y}\vert \partial_1 u_1(z) \vert dz \right\vert \omega^{(\eta^2)}(y)dy
    \leq \Vert \partial_1 u_1\Vert_{L^\infty(\R)}\int_\R \vert y \vert \omega^{(\eta^2)}(y)dy
    \\&\leq \eta^2\Vert \partial_1 u_1\Vert_{L^\infty(\R)} \int_\R \vert y \vert \omega(y)dy \leq C'''\eta^2.
\end{align*}
So
\begin{equation}\label{papapa2}
    \Vert u_1^\eta - u_1\Vert_{L^1(0,T,L^\infty(\R^d))} \leq C'''\eta^2
\end{equation}
and \eqref{papapa}, \eqref{papapa2} give \eqref{cfinii}.
\end{proof}

\medskip

Remark that we cannot directly apply lemma \ref{lemmaunicity} with $d=4$ with
\begin{equation*}
    \textbf{u} = \left(u,0, \frac{-\rho(\sigma+p)}{\mu},\frac{\sigma(\sigma+p)}{\cv\mu\rho}\right)
\end{equation*}
\begin{equation*}
    \textbf{g} = -\frac{\sigma+p}{\mu}
\end{equation*}
because we have not then $\textbf{u},\textbf{g}\in L^1(0,T,C_c([0,1],\R^{3}))$. Introducing $\chi \in C^\infty_c(\R^3)$ such that $\chi_{|K} = 1$, we have
\begin{equation*}
  \forall (t,x)\in [0,T]\times\T,\quad \forall \beta\in C(K),\quad (\chi\beta)(c^\varepsilon(t,x),\rho^\varepsilon(t,x),\theta^\varepsilon(t,x) = \beta(c^\varepsilon(t,x),\rho^\varepsilon(t,x),\theta^\varepsilon(t,x))
\end{equation*}
hence
\begin{equation*}
    \forall (t,x)\in [0,T]\times\T,\quad \forall \beta\in C(K),\quad \ps{\nu^\varepsilon_{t,x}}{\chi\beta} = \ps{\nu^\varepsilon_{t,x}}{\beta}
\end{equation*}
then by uniqueness of the limit
\begin{equation*}
    \nu = \chi \nu.
\end{equation*}
Therefore
\begin{equation*}
    \partial_t \nu + \Div(\textbf{u}\chi \nu) + \textbf{g}\chi\nu = 0
\end{equation*}
and $\chi\textbf{u}$, $\chi\textbf{g}$ have the desired regularity.





\paragraph{Second proof}
It turns out that if the initial velocity is in a higher Lebesgue space, we can recover that the transport velocity in the kinetic term is $L^1_tW^{1,\infty}_x$, which guarantees uniqueness. More precisely, we have the following
\begin{prop}
Suppose that $u_0\in W^{1,2+\eta}(\T)$ for some $\eta>0$. Then $\nabla\textbf{u}\in L^1(0,T,L^\infty(\R^4))$.
\end{prop}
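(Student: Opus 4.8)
The plan is to separate the entries of $\nabla\textbf{u}$ that are already controlled by Definition \ref{defhoff} from the genuinely new ones, and to extract the missing integrability from the extra hypothesis. Differentiating $\textbf{u}=\big(u,0,-\rho(\sigma+p)/\mu,\ \sigma(\sigma+p)/(\cv\mu\rho)\big)$ in the phase variables $c,\rho,\theta$ produces only bounded algebraic expressions in $\sigma,\sigma^2,p,\rho,\theta$ and the bounded coefficients $\mu(c),\cv(c),\gamma(c)$ and their $c$-derivatives. Since $\underline{\rho}\le\rho\le\overline{\rho}$, $\underline{\theta}\le\theta\le\overline{\theta}$ and $\int_0^T\|\sigma\|_\infty^2=\int_0^T\|\sigma^2\|_\infty\le C_6$, all of these lie in $L^2(0,T;L^\infty)\subset L^1(0,T;L^\infty)$; the $x$-derivative of the first component is $\partial_x u\in L^2(0,T;L^\infty)$ by \eqref{D_A1inf}. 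Thus everything reduces to the two $x$-derivatives $\partial_x\big(-\rho(\sigma+p)/\mu\big)=-\rho\,\partial_x\sigma/\mu$ and $\partial_x\big(\sigma(\sigma+p)/(\cv\mu\rho)\big)=(2\sigma+p)\,\partial_x\sigma/(\cv\mu\rho)$, i.e. to showing
\[
\int_0^T\|\partial_x\sigma\|_\infty\,dt<\infty
\qquad\text{and}\qquad
\int_0^T\|\sigma\,\partial_x\sigma\|_\infty\,dt<\infty .
\]

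The structural device I would use is the identity $\partial_x\sigma=\rho\dot u$ from \eqref{momts_L}, so that $\|\partial_x\sigma\|_\infty\le\overline{\rho}\,\|\dot u\|_\infty$ and one estimates the acceleration $\dot u$ rather than $\partial_x\sigma$ directly. This is crucial because it never requires differentiating the rough coefficients $\rho$ and $\mu(c)$: dividing \eqref{sigmapoint} by $\rho$ and using $\partial_x\sigma/\rho=\dot u$ gives $\mu\,\partial_x\dot u=\dot\sigma+\big(\tfrac{\gamma-1}{\cv}+1\big)\sigma\partial_x u$, whence $\|\partial_x\dot u\|_2\le\mu_{min}^{-1}\big(\|\dot\sigma\|_2+C\|\sigma\|_\infty\|\partial_x u\|_2\big)$ with no derivative of $\rho$ or $\mu$. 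Together with $\|\dot u\|_2\le\underline{\rho}^{-1}\|\partial_x\sigma\|_2$, the bound $\|\partial_x u\|_2\le\sqrt{C_8}$ from \eqref{D_A1}, and the one–dimensional Gagliardo–Nirenberg inequality $\|\dot u\|_\infty^2\le\|\dot u\|_2^2+2\|\dot u\|_2\|\partial_x\dot u\|_2$, the two integrals above are reduced to time–decay estimates for $\|\sigma(t)\|_\infty$, $\|\partial_x\sigma(t)\|_2$ and $\|\dot\sigma(t)\|_2$.

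The decay rates are exactly where $u_0\in W^{1,2+\eta}$ is used. From it, $\sigma_0=\mu_0\partial_x u_0-p_0\in L^{2+\eta}(\T)$ since $\partial_x u_0\in L^{2+\eta}$ and $\mu_0,p_0\in L^\infty$ by \eqref{initialcond}. I would then rerun the estimates of Proposition \ref{sigma} and of the higher–regularity step with $L^q$ data, $q=2+\eta$: testing the renormalized equation \eqref{sigmarenormalize} against powers of $\sigma$ propagates $\sigma\in L^\infty(0,T;L^q)$, and, combined with the time weights $\kappa=\min(1,t)$ already present in \eqref{secondhoff}, yields parabolic–scaling bounds of the form $\|\sigma(t)\|_\infty\lesssim t^{-1/(2q)}$ and, through the Gagliardo–Nirenberg step above, $\|\partial_x\sigma(t)\|_\infty\lesssim t^{-\beta}$ for some $\beta\in(\tfrac12,1)$. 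The $L^\infty$ bound on $\sigma$ itself is obtained by a Moser/De Giorgi iteration, which tolerates the merely bounded measurable coefficient $\mu/\rho$, while all gradient information is carried by $\dot u$ as above. Consequently $\|\sigma\,\partial_x\sigma(t)\|_\infty\lesssim t^{-\beta-1/(2q)}$, and the two integrals are finite provided $\beta<1$ (always true, giving the first) and $\beta+\tfrac1{2q}<1$, which holds precisely when $q>2$, i.e. when $\eta>0$; this then gives $\nabla\textbf{u}\in L^1(0,T;L^\infty(\R^4))$.

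The main obstacle is this last step: establishing the sharp, uniform time–decay rates for $\|\sigma(t)\|_\infty$, $\|\partial_x\sigma(t)\|_2$ and $\|\dot\sigma(t)\|_2$ starting from only $\sigma_0\in L^{2+\eta}$, with constants uniform along the smooth approximating sequence of \cite{Li}. The delicate parts are the weighted $L^q$ energy estimates — choosing the weight powers so that no $\partial_x\sigma_0$ or $\dot\sigma_0$ norm is needed — and the borderline bookkeeping showing that the $\sigma\,\partial_x\sigma$ contribution becomes integrable exactly at the threshold $q>2$. The case $q=2$ genuinely fails (the relevant exponent hits $-1$), which is why the strict inequality $\eta>0$ cannot be dropped.
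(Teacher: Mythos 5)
Your proposal follows essentially the same route as the paper: reduce $\nabla\textbf{u}$ to the two quantities $\partial_x\sigma$ and $\partial_x(\sigma^2)$ (the phase-variable derivatives and $\partial_x u$ being already controlled), control the first via the weighted estimate \eqref{secondhoff} and Gagliardo--Nirenberg (no $\eta$ needed there), and control the second by testing \eqref{sigmarenormalize} with $\beta(\sigma)=|\sigma|^{2+\eta}$ so that the borderline time weight $\kappa^{-1+c\eta}$ becomes integrable, which is exactly the paper's use of $u_0\in W^{1,2+\eta}$. The only caveats are that the decay should be kept in the integrated, $\kappa$-weighted form actually provable here (e.g. $\kappa^{1/4}\Vert\sigma\Vert_\infty\in L^\infty_t$ and $\int_0^T\kappa^{1-\eta/8}\Vert\partial_x(\sigma^2)\Vert_\infty^2<\infty$) rather than as pointwise rates $t^{-\beta}$, and that the Moser/De Giorgi iteration is unnecessary since one-dimensional Gagliardo--Nirenberg already gives the $L^\infty$ bounds.
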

\begin{proof}
We just have to prove that under these conditions,
\begin{equation*}
    \partial_x \sigma \in L^1(0,T,L^\infty(\T)),\quad \partial_x (\sigma^2)\in L^1(0,T,L^\infty(\T)).
\end{equation*}
Using \eqref{sigmapoint}, \eqref{D_A1} and \eqref{secondhoff}, we get
\begin{equation*}
\kappa^{1/2}\partial_x\left(\frac{\partial_x\sigma}{\rho}\right) \in L^2(0,T,L^2(\T)).
\end{equation*}
Thus, by the Gagliardo-Nirenberg inequality, and using that $\partial_x \sigma\in L^2(0,T,L^2(\T))$, we get
\begin{equation*}
\kappa^{1/4}\Vert \sigma\Vert_\infty\in L^\infty([0,T]),\quad \kappa^{1/4}\Vert\partial_x\sigma\Vert_\infty \in L^2([0,T])
\end{equation*}
In particular, 
\begin{align*}
    \int_0^T \Vert\partial_x \sigma\Vert_\infty = \int_0^T \kappa^{1/4} \Vert \partial_x \sigma\Vert_\infty \kappa^{-1/4} &\leq \left(\int_0^T \kappa^{1/2}\Vert\partial_x\sigma\Vert_\infty^2\right)^{1/2}\left(\int_0^T \kappa^{-1/2}\right)^{1/2} \\&\leq (T+1)^{1/2}\left(\int_0^T \kappa^{1/2}\Vert\partial_x\sigma\Vert_\infty^2\right)^{1/2}\leq C.
\end{align*}
Moreover, we have by the Gagliardo-Nirenberg inequality
\begin{equation}\label{ayounger}
    \Vert \partial_x (\sigma^2)\Vert_\infty^2 \leq C\Vert \partial_x(\sigma^2)\Vert_2 \Vert \partial_x(\partial_x(\sigma^2)/\rho)\Vert_2.
\end{equation}
But
\begin{equation*}
    \partial_x \left(\frac{\partial_x(\sigma^2)}{\rho}\right) = 2\sigma\partial_x\left(\frac{\partial_x\sigma}{\rho}\right) + \frac{(\partial_x \sigma)^2}{\rho} = 2\sigma \dot{\sigma} + \left(\frac{\gamma-1}{\cv}+1\right)\sigma^2\partial_x u + \frac{(\partial_x\sigma)^2}{\rho}   
\end{equation*}
hence
\begin{equation*}
    \int_0^1 \left(\partial_x \left(\frac{\partial_x(\sigma^2)}{\rho}\right)\right)^2 \leq C \int_0^1 \sigma^2\dot{\sigma}^2 + C\int_0^1 \sigma^4 (\partial_x u)^2 + C\int_0^1 (\partial_x \sigma)^4.
\end{equation*}
But
\begin{equation*}
    \int_0^T\kappa^{3/2}\int_0^1 \sigma^2 \dot{\sigma}^2 \leq \sup_{[0,T]}(\kappa^{1/2}\Vert \sigma\Vert_\infty^2) \int_0^T\kappa\int_0^1 \dot{\sigma}^2 \leq C
\end{equation*}
\begin{equation*}
    \int_0^T \kappa^{3/2}\int_0^1 \sigma^4 (\partial_x u)^2\leq \sup_{[0,T]}(\kappa^{1/2}\Vert \sigma\Vert_\infty^2)\sup_{[0,T]}\int_0^1 (\partial_x u)^2\int_0^T\Vert\sigma\Vert_\infty^2\leq C
\end{equation*}
\begin{equation*}
    \int_0^T \kappa^{3/2}\int_0^1 (\partial_x \sigma)^4 \leq \sup_{[0,T]}(\kappa \int_0^1(\partial_x\sigma)^2)\int_0^T \kappa^{1/2}\Vert\partial_x\sigma\Vert^2_\infty \leq C
\end{equation*}
so
\begin{equation}\label{firstmorceau}
    \int_0^T \kappa^{3/2}\int_0^1 \left(\partial_x \left(\frac{\partial_x(\sigma^2)}{\rho}\right)\right)^2 \leq C.
\end{equation}
Evaluating \eqref{sigmarenormalize} with $\beta(\sigma)=\vert \sigma\vert^{2+\eta}$, then integrating in space, we get
\begin{equation*}
    \frac{d}{dt} \int_0^1 \frac{\vert \sigma\vert^{2+\eta}}{\mu} +(2+\eta)(1+\eta)\int_0^1 \frac{(\partial_x \sigma)^2 \vert \sigma\vert^\eta}{\rho} \leq C\int_0^1\vert \partial_x u\vert \vert \sigma\vert^{2+\eta}. 
\end{equation*}
Because $\partial_x u\in L^1(0,T,L^\infty(\T))$ and because $\sigma_0\in W^{1,2+\eta}(\T)$, we can close the inequality to get
\begin{equation*}
    \int_0^1 \frac{\vert \sigma\vert^{2+\eta}}{\mu} + \int_0^T\int_0^1 \frac{(\partial_x\sigma)^2\vert \sigma\vert^\eta}{\rho} \leq C.
\end{equation*}
Then
\begin{equation}\label{secondmorceau}
    \int_0^T\kappa^{1/2-\eta/4}\int_0^1 (\partial_x \sigma)^2\sigma^2 \leq \sup_{[0,T]}(\kappa^{1/4}\Vert\sigma\Vert_\infty)^{2-\eta}\int_0^T \int_0^1 (\partial_x \sigma)^2 \vert \sigma\vert^\eta\leq C.
\end{equation}
Using \eqref{ayounger}, \eqref{firstmorceau} and \eqref{secondmorceau} we get
\begin{align*}
    \int_0^T \kappa^{1-\eta/8}\Vert \partial_x(\sigma^2)\Vert_\infty^2 &\leq C\int_0^T \kappa^{1/4-\eta/8}\Vert \partial_x(\sigma^2)\Vert_2 \kappa^{3/4}\Vert \partial_x(\partial_x(\sigma^2)/\rho)\Vert_2
    \\&\leq \frac{C^2}{2}\int_0^T \kappa^{1/2-\eta/4}\int_0^1 (\partial_x\sigma)^2\sigma^2 + \frac{C^2}{2}\int_0^T \kappa^{3/2}\int_0^1 \left(\frac{\partial_x(\sigma^2)}{\rho}\right)^2 
    \\&\leq C'.
\end{align*}
Therefore
\begin{align*}
    \int_0^T \Vert \partial_x(\sigma^2)\Vert_\infty &= \int_0^T \kappa^{-1/2+\eta/16}\kappa^{1/2-\eta/16}\Vert \partial_x(\sigma^2)\Vert_\infty
    \\&\leq \left(\int_0^T\kappa^{-1+\eta/8}\right)^{1/2}\left(\int_0^T\kappa^{1-\eta/8}\Vert \partial_x(\sigma^2)\Vert_\infty^2\right)^{1/2}
    \\&\leq C''.
\end{align*}
\end{proof}

\subsection{Construction of a particular solution and conclusion}

The objective of this section is to construct a particular solution for the kinetic equation. Using the uniqueness of solutions for the former we will identify on the one hand $\nu_{t,x}$, the family of Young measures associated to the sequence of solutions à la Hoff and on the other hand the particular solution that we will construct below. 

Remark that \eqref{systmacro} can be formally rewritten as
\begin{subequations}
\label{eqmacro}
    \begin{empheq}[left=\empheqlbrace]{alignat=1}
        \partial_t \alpha_\pm + \partial_x(\alpha_\pm u) &= \frac{\alpha_\pm}{\mu_\pm}\sigma + \alpha_\pm \frac{\gamma_\pm-1}{\mu_\pm}\rho_\pm\theta_\pm\\
        \alpha_\pm (\partial_t \rho_\pm + u\partial_x \rho_\pm) &= \alpha_\pm\left( -\frac{\rho_\pm}{\mu_\pm}\sigma - \frac{\rho_\pm (\gamma_\pm-1)\rho_\pm \theta_\pm}{\mu_\pm}\right)\\
        \alpha_\pm (\partial_t (\rho_\pm\theta_\pm) + u\partial_x (\rho_\pm \theta_\pm)) &= \alpha_\pm \left(\frac{\sigma^2}{\cv_\pm\mu_\pm} + \frac{\gamma_\pm-2}{\cv_\pm\mu_\pm}\rho_\pm\theta_\pm\sigma - \frac{\gamma_\pm-1}{\cv_\pm\mu_\pm}(\rho_\pm\theta_\pm)^2\right).\label{eqh}
    \end{empheq}
\end{subequations}
Moreover, because $\partial_x u\in L^1(0,T,L^\infty(\T))$, it can be shown that \eqref{systmacro} and \eqref{eqmacro} are equivalent.
\begin{lemma}
Given $u$ and $\sigma$ as in the theorem \ref{Theorem_main1}, the equation
\begin{subequations}\label{168c}
    \begin{empheq}[left=\empheqlbrace]{alignat=1}
    \partial_t h + u\partial_x h &= \frac{\sigma^2}{\cv_\pm\mu_\pm} + \frac{\gamma_\pm-2}{\cv_\pm\mu_\pm}h\sigma - \frac{\gamma_\pm-1}{\cv_\pm\mu_\pm}h^2\\
    h(0,\cdot)&=\rho_{\pm,0} \theta_{\pm,0}
    \end{empheq}
\end{subequations}
    has a unique global solution $h$ in $L^\infty(0,T,L^\infty(\T))$. Moreover, there exists some $c>0$ such that
    \begin{equation*}
        \forall (t,x)\in [0,T]\times \T,\quad h(t,x)\geq c.
    \end{equation*}
\end{lemma}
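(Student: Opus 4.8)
The plan is to solve \eqref{168c} by the method of characteristics, reducing the transport--Riccati PDE to a family of scalar ordinary differential equations indexed by the initial position. Under the Hoff regularity of Definition \ref{defhoff}, in particular $\partial_x u\in L^2(0,T;L^\infty(\T))$ from \eqref{D_A1inf}, the flow $X(t,0,x)$ is well defined and Lipschitz in $x$, and for a fixed $x\in\T$ I would set $H(t):=h(t,X(t,0,x))$. Along the characteristic the left-hand side of \eqref{168c} is exactly $\dot H$, so $H$ must solve
\begin{equation*}
\dot H = a(t) + b(t)H - d\,H^2,\qquad H(0)=\rho_{\pm,0}(x)\theta_{\pm,0}(x),
\end{equation*}
with $a(t)=\sigma(t,X(t,0,x))^2/(\cv_\pm\mu_\pm)\ge 0$, $b(t)=(\gamma_\pm-2)\sigma(t,X(t,0,x))/(\cv_\pm\mu_\pm)$, and the constant $d=(\gamma_\pm-1)/(\cv_\pm\mu_\pm)>0$. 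Using \eqref{sigmainf} one gets $\int_0^T a\le C_6/(\cv_\pm\mu_\pm)$ and $\int_0^T|b|\le C\sqrt{T\,C_6}$, so $a,b\in L^1(0,T)$ uniformly in $x$.

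Since the right-hand side is a polynomial in $H$, hence locally Lipschitz, with $L^1$-in-time coefficients, the Carathéodory existence theorem provides a unique maximal solution $H$. I would then promote this to uniqueness at the PDE level: if $h_1,h_2\in L^\infty$ both solve \eqref{168c}, the difference $w=h_2-h_1$ satisfies the linear transport equation $D_t w=(b-d(h_1+h_2))w$ with $w|_{t=0}=0$, which along characteristics is a scalar linear ODE with $L^1_t$ coefficient (using $h_1,h_2\in L^\infty$), so Grönwall forces $w\equiv0$.

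For global existence and the quantitative bounds I would argue directly on the ODE. Completing the square gives $bH-dH^2\le b^2/(4d)$, so $\dot H\le a+b^2/(4d)\le C\,\sigma(t,X)^2$; integrating and invoking \eqref{sigmainf} yields $H(t)\le \overline{\rho_0}\,\overline{\theta_0}+C\,C_6=:\overline{C}$, a bound uniform in $x$ that rules out blow-up and hence produces a solution on all of $[0,T]$. The lower bound is the delicate point and I would obtain it through the substitution $G=1/H$, legitimate as long as $H>0$: a direct computation gives $\dot G=d-bG-aG^2$, and since $a\ge0$ and $G>0$ one may drop $-aG^2$ and estimate $-bG\le|b|G$ to get $\dot G\le d+|b|G$. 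Grönwall then yields $G(t)\le\bigl(G(0)+dT\bigr)\exp\bigl(\int_0^T|b|\bigr)$ with $G(0)\le 1/(\underline{\rho_0}\,\underline{\theta_0})$; as the right-hand side is finite and uniform in $x$, the standard continuation argument shows $H$ never reaches $0$ and in fact $H(t)\ge c:=\bigl((1/(\underline{\rho_0}\,\underline{\theta_0})+dT)\,e^{\int_0^T|b|}\bigr)^{-1}>0$.

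Finally I would reconstruct the Eulerian unknown by setting $h(t,y):=H_{X(0,t,y)}(t)$, where $H_x$ denotes the solution issued from $x$; continuous dependence of $H_x$ on $x$ together with the Lipschitz regularity of the flow makes $h$ measurable, the two bounds give $c\le h\le \overline{C}$ a.e., so $h\in L^\infty(0,T;L^\infty(\T))$, and by construction it solves \eqref{168c}. The only genuine obstacle is the lower bound: one must exploit the sign $a\ge0$ of the $\sigma^2$ source and the $L^1_t$ integrability of $b$ coming precisely from the $L^2_tL^\infty_x$ control of $\sigma$ in \eqref{sigmainf}; without that sign the $1/H$ trick would fail.
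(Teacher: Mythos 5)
Your proof is correct, and it reaches the same a priori bounds as the paper but by a somewhat different route, so a brief comparison is in order. For existence and uniqueness the paper works in Eulerian variables and runs a Banach fixed-point argument: it composes the solution operator $\Phi_1$ of the linear transport equation with the nonlinear substitution $\Phi_2$, proves contraction on a ball for $T$ small (with a smallness threshold depending on the data only through $\Vert \rho_{\pm,0}\theta_{\pm,0}\Vert_\infty$ in a monotone way), and then bootstraps using a uniform upper bound on $\Vert h\Vert_\infty$; you instead pass immediately to characteristics and invoke Carath\'eodory existence and local Lipschitz uniqueness for the resulting scalar Riccati ODE with $L^1_t$ coefficients, which is arguably more elementary and gives global existence in one stroke once the a priori bounds are in hand. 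The bounds themselves differ only in the algebra. For the upper bound the paper drops the $-h^2$ term and applies Duhamel--Gr\"onwall to $\dot{\tilde h}\le \tilde\sigma^2/(\cv_\pm\mu_\pm) + \tfrac{\gamma_\pm-2}{\cv_\pm\mu_\pm}\tilde h\tilde\sigma$, while you complete the square in $bH-dH^2$ to get $\dot H\le a+b^2/(4d)$; both use only $\sigma\in L^2_tL^\infty_x$. For the lower bound both arguments ultimately rest on the substitution $G=1/H$ and on the sign $\sigma^2\ge 0$, but the paper absorbs the cross term $h\sigma$ into the perfect square $\bigl(\sigma+\tfrac{\gamma_\pm-2}{2}h\bigr)^2\ge 0$ at the price of enlarging the coefficient of $-h^2$, obtaining $\dot{\tilde h}\ge -\beta\tilde h^2$ and hence $\tilde h(t)\ge \tilde h(0)/(1+T\beta\tilde h(0))$ with no exponential factor, whereas you keep the cross term as $|b|G$ and close with Gr\"onwall, which yields a slightly worse constant $c$ involving $e^{\int_0^T|b|}$. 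Both constants are positive and uniform in $x$, so either version establishes the lemma; your reconstruction of the Eulerian solution via the inverse flow and the continuation argument preventing $H$ from touching zero are both sound given the Lipschitz regularity of $u$ in space guaranteed by the Hoff bounds.
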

\begin{proof}
Let's denote $C:=\left(\int_0^T \Vert\sigma\Vert_\infty^2\right)^\frac{1}{2}$.
To begin with, let's introduce the function $\Phi_1 : L^1(0,T,L^\infty(\T))\rightarrow L^\infty(0,T,L^\infty(\T)), z \mapsto h$ which associate to a data $z$ the solution of

\begin{subequations}
        \begin{empheq}[left=\empheqlbrace]{align*}
        \partial_t h + u\partial_x h &= z\\
        h(0,\cdot) &= \rho_{\pm,0}\theta_{\pm,0}.  
        \end{empheq}
    \end{subequations}
    It can be shown that this function is well defined and $1$-Lipschitz. In particular we have
    \begin{equation*}
        \forall h\in L^\infty(0,T,L^\infty(\T)),\quad \Vert \Phi_1(h)\Vert_\infty \leq \Vert \rho_{\pm,0}\theta_{\pm,0}\Vert_\infty + \int_0^T \Vert z\Vert_\infty.
    \end{equation*}
    Let's define now
    $$\Phi_2 : \left|\begin{array}{ccc}
        L^\infty(0,T,L^\infty(\T)) & \rightarrow & L^1(0,T,L^\infty(\T) \\
        h & \mapsto & \dfrac{\sigma^2}{\cv_\pm\mu_\pm} + \dfrac{\gamma_\pm - 2}{\cv_\pm\mu_\pm} h\sigma - \dfrac{\gamma_\pm-1}{\cv_\pm\mu_\pm}h^2
    \end{array}\right..$$
    We obtain the inequality
    \begin{equation*}
        \forall h\in L^\infty(0,T,L^\infty(\T)),\quad \int_0^T \Vert \Phi_2(h)\Vert_\infty \leq \frac{C^2}{\cv_\pm\mu_\pm} + \sqrt{T}\Vert h\Vert_\infty \frac{\vert \gamma_\pm -2\vert}{\cv_\pm\mu_\pm}C + T\Vert h\Vert_\infty^2\frac{\gamma_\pm -1}{\cv_\pm\mu_\pm}.
    \end{equation*}
    So we get
    \begin{equation*}
        \Vert \Phi_1(\Phi_2(h))\Vert_\infty \leq \Vert \rho_{\pm,0} \theta_{\pm,0}\Vert_\infty + \frac{C^2}{\cv_\pm\mu_\pm} + \sqrt{T}\Vert h\Vert_\infty \frac{\vert \gamma_\pm -2\vert}{\cv_\pm\mu_\pm}C + T\Vert h\Vert_\infty^2\frac{\gamma_\pm -1}{\cv_\pm\mu_\pm}.
    \end{equation*}
    To have
    \begin{equation*}
        \Phi_1\circ \Phi_2(B(0,M))\subset B(0,M)
    \end{equation*}
    for a certain $M>0$, we just have to assure
    \begin{equation*}
        \Vert \rho_{\pm,0} \theta_{\pm,0} \Vert_\infty + \frac{C^2}{\cv_\pm\mu_\pm} + \sqrt{T}M \frac{\vert \gamma_\pm -2\vert}{\cv_\pm\mu_\pm}C + TM^2\frac{\gamma_\pm -1}{\cv_\pm\mu_\pm} \leq M
    \end{equation*}
    which is verified assuming for instance
    \begin{equation*}
        M \geq \Vert \rho_{\pm,0} \theta_{\pm,0} \Vert_\infty + \frac{C^2}{\cv_\pm\mu_\pm} + \frac{\vert \gamma_\pm -2\vert}{\cv_\pm\mu_\pm}C + \frac{\gamma_\pm -1}{\cv_\pm\mu_\pm},\quad T\leq \frac{1}{M^2}. 
    \end{equation*}
    We deduce from that
    \begin{equation*}
        \forall h_1,h_2\in L^\infty(0,T,L^\infty(\T)),\quad \int_0^T \Vert \Phi_2(h_2)-\Phi_2(h_1)\Vert_\infty \leq \Vert h_2-h_1\Vert_\infty \left(\sqrt{T}\frac{\vert \gamma_\pm-2\vert}{\cv_\pm\mu_\pm}C+2TM\frac{\gamma_\pm-1}{\cv_\pm\mu_\pm}\right)
    \end{equation*}
    and so
    \begin{equation*}
        \forall h_1,h_2\in L^\infty(0,T,L^\infty(\T)),\quad \Vert \Phi_1(\Phi_2(h_2))-\Phi_1(\Phi_2(h_1))\Vert_\infty \leq \Vert h_2 - h_1\Vert_\infty \left(\sqrt{T}\frac{\vert\gamma_\pm-2\vert}{\cv_\pm\mu_\pm}C + 2TM\frac{\gamma_\pm-1}{\cv_\pm\mu_\pm}\right). 
    \end{equation*}
    To obtain the contraction property of $\Phi_1\circ \Phi_2$, we just have to take 
    \begin{equation*}
        T < \min\left(1, \frac{\cv_\pm\mu_\pm}{\vert \gamma_\pm-2\vert C + 2M(\gamma_\pm - 1)}\right)^2.
    \end{equation*}
    So, using the Banach fixed-point theorem, we find the existence and uniqueness of a solution $h$ to \eqref{168c} with initial condition $\rho_{\pm,0} \theta_{\pm,0}$, if $T$ is smaller than a certain $T_0>0$. This $T_0$ depends on the initial data only through $\Vert \rho_{\pm,0}\theta_{\pm,0}\Vert_\infty$, in an non-decreasing manner. So in order to use a bootstrap argument to have existence and uniqueness of a global solution, we just have to find a uniform bound on $\Vert h\Vert_\infty$.

    \ 

    From \eqref{eqh} we get
    \begin{equation*}
        \frac{d}{dt} \Tilde{h}\leq \frac{\Tilde{\sigma}^2}{\cv_\pm\mu_\pm} + \frac{\gamma_\pm-2}{\cv_\pm\mu_\pm}\Tilde{h}\Tilde{\sigma}
    \end{equation*}
    hence
    \begin{align*}
        \Tilde{h}&\leq \rho_{\pm,0}\theta_{\pm,0} \exp\left(\frac{\gamma_\pm - 2}{\cv_\pm\mu_\pm}\int_0^t \Tilde{\sigma}\right) + \int_0^t \frac{\Tilde{\sigma}^2}{\cv_\pm\mu_\pm} \exp\left(\frac{\gamma_\pm-2}{\cv_\pm\mu_\pm}\int_s^t \Tilde{\sigma}\right)ds
        \\&\leq \left(\Vert \rho_{\pm,0} \theta_{\pm,0}\Vert_\infty + \frac{C^2}{\cv_\pm\mu_\pm}\right)\exp\left(\frac{\vert\gamma_\pm-2\vert}{\cv_\pm\mu_\pm}\sqrt{T}C\right).
    \end{align*}
    To finish with, let's show the existence of a $c>0$ such that
    \begin{equation*}
        (\forall t\in [0,t_0],\Tilde{h}(t)>0)\quad \Longrightarrow \quad \Tilde{h}(t_0)>c. 
    \end{equation*}
        Rewriting \eqref{eqh}
        \begin{equation*}
            \partial_t h + u\partial_x h = \frac{1}{\cv_\pm\mu_\pm}\left(\sigma + \frac{\gamma_\pm-2}{2}h\right)^2 - \frac{1}{\cv_\pm\mu_\pm}\left(\left(\frac{\gamma_\pm-2}{2}\right)^2 + \gamma_\pm-1\right)h^2
        \end{equation*}
        and denoting
        \begin{equation*}
            \beta = \frac{1}{\cv_\pm\mu_\pm}\left(\left(\frac{\gamma_\pm-2}{2}\right)^2 + \gamma_\pm-1\right)
        \end{equation*}
        we obtain
        \begin{equation*}
            \frac{d}{dt}\Tilde{h} \geq -\beta \Tilde{h}^2.
        \end{equation*}
        Dividing by $\Tilde{h}^2$, we get
        \begin{equation*}
            \frac{d}{dt}\left(\frac{1}{\Tilde{h}}\right) \leq \beta
        \end{equation*}
        hence
        \begin{equation*}
            \Tilde{h}(t)\geq \frac{\Tilde{h}(0)}{1+T\beta \Tilde{h}(0)}\geq \frac{\inf \rho_{\pm,0}\theta_{\pm,0}}{1+T\beta\inf \rho_{\pm,0}\theta_{\pm,0}} :=c.
        \end{equation*}
\end{proof}

\begin{prop}
    Given $u,\sigma$ and $(\alpha_{\pm,0}, \rho_{\pm,0},\theta_{\pm,0})$ as in the theorem \ref{Theorem_main1}, The system \eqref{eqmacro} has at least one global solution
    $(\alpha_\pm, \rho_\pm, \theta_\pm)\in (L^\infty(0,T,L^\infty(\T)))^3$.
\end{prop}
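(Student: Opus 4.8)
The plan is to exploit the triangular (cascade) structure of \eqref{eqmacro}: once $u$ and $\sigma$ are frozen as data with the regularity of Definition \ref{defhoff}, the three equations decouple and can be solved one after another, the only genuinely nonlinear step being the one already handled in the preceding Lemma. I would work sign by sign and set $h_\pm := \rho_\pm\theta_\pm$, observing that dividing the last equation of \eqref{eqmacro} by $\alpha_\pm$ yields exactly \eqref{168c}. Thus the first step is to invoke the preceding Lemma to obtain $h_\pm \in L^\infty(0,T,L^\infty(\T))$ together with a strictly positive lower bound $h_\pm \geq c > 0$.

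The second step is to solve for $\rho_\pm$. Dividing the second equation of \eqref{eqmacro} by $\alpha_\pm$ and using $\rho_\pm\theta_\pm = h_\pm$, one is left with the linear transport equation
\begin{equation*}
\partial_t\rho_\pm + u\,\partial_x\rho_\pm = -\frac{1}{\mu_\pm}\bigl(\sigma + (\gamma_\pm-1)h_\pm\bigr)\rho_\pm, \qquad \rho_\pm(0,\cdot) = \rho_{\pm,0}.
\end{equation*}
Since $\partial_x u \in L^1(0,T,L^\infty(\T))$, the characteristic flow $X$ of the notations section is well defined, and integrating along characteristics gives the explicit Duhamel formula
\begin{equation*}
\rho_\pm(t,X(t,0,x)) = \rho_{\pm,0}(x)\exp\left(-\int_0^t \frac{\sigma + (\gamma_\pm-1)h_\pm}{\mu_\pm}(s,X(s,0,x))\,ds\right).
\end{equation*}
Because $\sigma \in L^2(0,T,L^\infty(\T))$ by \eqref{D_A1inf} and $h_\pm \in L^\infty$, the exponent is bounded, so $\rho_\pm$ lies in $L^\infty(0,T,L^\infty(\T))$ and, starting from $\rho_{\pm,0}\geq\underline{\rho_0}>0$, stays between two positive constants. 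I then simply set $\theta_\pm := h_\pm/\rho_\pm$, which is bounded and bounded below since $h_\pm \geq c$ and $\rho_\pm$ is bounded above.

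For the last step I solve the conservative equation for $\alpha_\pm$, rewritten in non-conservative form as
\begin{equation*}
\partial_t\alpha_\pm + u\,\partial_x\alpha_\pm = \alpha_\pm\left(\frac{\sigma + (\gamma_\pm-1)h_\pm}{\mu_\pm} - \partial_x u\right), \qquad \alpha_\pm(0,\cdot)=\alpha_{\pm,0}.
\end{equation*}
This is again linear in $\alpha_\pm$, and since $\partial_x u \in L^2(0,T,L^\infty(\T))$ by \eqref{D_A1inf}, the same characteristic/Duhamel argument yields $\alpha_\pm \in L^\infty(0,T,L^\infty(\T))$ with $\alpha_\pm\geq 0$. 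It then only remains to verify that $(\alpha_\pm,\rho_\pm,\theta_\pm)$ solves \eqref{eqmacro} in the distributional sense: the first and third equations follow by multiplying the corresponding divided equations back by $\alpha_\pm$, and the second by construction of $\rho_\pm$.

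Overall this is a succession of linear transport resolutions, so there is no real analytic difficulty beyond the preceding Lemma. The point I would be most careful about is the rigorous justification of the characteristic formulas in this low-regularity framework, namely that the flow $X$ is well defined and that the Duhamel representations produce genuine weak solutions; this is guaranteed by the Hoff bounds $\partial_x u,\,\sigma \in L^2(0,T,L^\infty(\T))$, exactly as in the Lagrangian computations \eqref{duhamrhot}--\eqref{duhamelthetat} used in the uniqueness proof.
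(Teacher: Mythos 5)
Your proposal is correct and follows essentially the same route as the paper: first obtain $h_\pm=\rho_\pm\theta_\pm$ from the preceding lemma (with its positive lower bound), then solve linear transport equations with bounded coefficients for the remaining unknowns via Duhamel along the characteristics of $u$, and finally recover $\theta_\pm=h_\pm/\rho_\pm$ using the lower bounds. The only cosmetic difference is the order (the paper solves for $\alpha_\pm$ and $\rho_\pm$ simultaneously as the pair $(f,g)$ and then sets $\theta_\pm=h/g$), and your sign for the $\sigma$-term in the $\rho_\pm$ equation is the one consistent with \eqref{eqmacro}.
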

\begin{proof}
    Any equation of the type
    \begin{empheq}[left=\empheqlbrace]{align*}
        \partial_t b + u\partial_x b &= c b\\
        b(0,\cdot) &= b_0
    \end{empheq}
    \begin{empheq}[left=\empheqlbrace]{align*}
        \partial_t b' + u\partial_x b' &= c' b'\\
        b'(0,\cdot) &= b'_0
    \end{empheq}
with $b_0,b'_0\in L^\infty(0,T,L^\infty(\T))$ and $c_0, c_0', u\in L^1(0,T,L^\infty(\T))$ has a unique global solution in $L^\infty(0,T,L^\infty(\T))$. So, the system of equations
  \begin{subequations}
    \begin{empheq}[left=\empheqlbrace]{align*}
        \partial_t f + \partial_x (fu) &= f\left(\frac{\sigma}{\mu_\pm}  + \frac{\gamma_\pm-1}{\mu_\pm} h \right)\\
        \partial_t g + u\partial_x g &= g\left(\frac{\sigma}{\mu_\pm} - \frac{\gamma_\pm-1}{\mu_\pm} h\right) \label{168b}
    \end{empheq}
\end{subequations}
with initial data $(\alpha_{\pm,0},\rho_{\pm,0})$ has a unique solution. Moreover,
\begin{equation*}
    \Tilde{g} = \rho_{\pm,0} \exp\left(\frac{1}{\mu_\pm}\int_0^t \sigma - \frac{\gamma_\pm-1}{\mu_\pm}\int_0^t h\right) \geq \inf \rho_{\pm,0} \exp\left(-\frac{1}{\mu_\pm}\sqrt{T}C-\frac{\gamma_\pm-1}{\mu_\pm}T\Vert h\Vert_\infty\right).
\end{equation*}
Denoting now $\alpha_\pm = f$, $\rho_\pm = g$, $\theta_\pm = h/g$, we have built the desired solution.
\end{proof}
\paragraph{Uniqueness of the Kinetic equation and conclusion}
It is cumbersome yet straightforward to verify that by construction, the familly of measures $\Tilde{\nu}$ defined by  
\begin{equation*}
    \Tilde{\nu}_{t,x}:= \alpha_+(t,x) \delta_{(1,\rho_+(t,x),\theta_+(t,x))} + \alpha_-(t,x)\delta_{(0,\rho_-(t,x),\theta_-(t,x))}
\end{equation*}
are a solution of \eqref{KinEq}.
By uniqueness, we have that 
\[\Tilde{\nu}_{t,x}=\nu_{t,x}.\] 
This concludes the proof of Theorem \ref{Theorem_main3}.


\section{Numerical illustrations}\label{num}

In order to illustrate our results, we consider a discretization of the equation systems mesoscopic \eqref{NS}-\eqref{c} and macroscopic \eqref{systmacro2} that is a quite straightforward extension of the one described more precisely in \cite{BrBuLa2}. We consider a moving-mesh scheme, such that, in the mesoscopic case, each cell is occupied by a pure fluid. To observe the oscillations that the change of phase produce, we consider only $J=100$ meshes, thus a discretization of the torus $\T$ (periodic boundary conditions with period 1) on the order of $\varepsilon=1/J = 0.01$. This is not a problem in general, as convergence seems to be very rapid. However, in a few illustrations we take $J=1000$ to observe continuity and shock properties of certain solutions.
Let us take 
\[ \mu_+ = 0.1, \quad \gamma_+ = 2, \quad \cv_+ = 1\]
and
\[ \mu_- = 0.2, \quad \gamma_- = 3, \quad \cv_- = 1.\]
We choose as periodic initial conditions for all $x\in \mathbb{T}$ and therefore on $(0,1)$:
$$u_0(x)=0$$
and
$$\alpha_0^+(x)= 
    1/2  \text{ if } x \in [0.25,0.75]  \text{ and }
    1 \text{ elsewhere} 
$$ 
$$ \rho_0^+(x)=  
    2 \text{ if } x \in [0.25,0.75]
    \text{ and }
    0.2 \text{ elsewhere} 
$$
$$\theta_0^+(x)= 
    2  \text{ if } x \in [0.25,0.75] 
    \text{ and }
    0.2  \text{ elsewhere} 
$$
and 
$$\alpha_0^-(x)= 
    1/2  \text{ if } x \in [0.25,0.75] 
    \text{ and }
    0  \text{ elsewhere} 
$$
$$ \rho_0^-(x)= 
    1 \text{ if } x \in [0.25,0.75] \text{ and } 
    0.2 \text{ elsewhere} 
$$
$$ \theta_0^-(x)= 
    1 \text{ if } x \in [0.25,0.75] \text{ and } 
    0.2  \text{ elsewhere.} 
$$
Solutions marked with an epsilon are mesoscopic solutions, the others are macroscopic solutions.

\vspace{2cm}

\begin{center}
   {\bf Volume fraction}


\noindent

\vspace{2cm}
These numerical results appear as an illustration of the mathematical results proven here above in the sense that the macroscopic densities, temperatures and pressures appear as envelopes for the mesoscopic ones, and both macroscopic and mesoscopic velocities coincide well. In order to "quantify" this, we propose an experimental estimation of the convergence order between the mesoscopic and macroscopic velocity and volume fraction $\alpha_+$. In order to compare the mesoscopic and macroscopic volume fraction $\alpha_+$ we evaluate the local mean value (between two consecutive cells) of the mesoscopic one. See Figure \ref{cvorder}. The order of convergence at the final time with respect to $1/J$ (that is also here related to the length of pure zones) seems to be 1 in the $L^\infty$ norm in space for the velocity (that is continuous with respect to the space variable), and in the $L^1$ norm for the volume fraction (that is {\em not} continuous). 

An interesting feature of these solutions is that they exhibit discontinuities for densities, pressures, volume fractions and temperatures, and that the amplitude of these discontinuities seems to decrease exponentially in time, which is reminiscent from \cite{Hoff1986} in the isentropic (barotropic) case. 


\begin{figure}[H]
\centering
\caption{Error with respect to the number of cells $J$}
\label{cvorder}
\begin{tikzpicture}
  \begin{loglogaxis}[
    width=11cm,
    height=7cm,
    xlabel={Number of cells},
    ylabel={Error},
    title={},
    grid=both,
    grid style={dotted,gray!50},
    legend style={at={(0.05,0.05)}, anchor=south west, font=\small},
    legend cell align={left},
    ticklabel style = {font=\small},
  ]

    \addplot[
      mark=*,
      color=blue,
      thick,
    ] coordinates {
      (100, 0.065289)
      (200, 0.031935)
      (400, 0.016431)
      (800, 0.008046)
      (1600, 0.004237)
      (3200, 0.002114)
      (6400, 0.001057)
    };
    \addlegendentry{$L^\infty$ error for the velocity}

    \addplot[
      mark=square*,
      color=red,
      dashed,
      thick,
    ] coordinates {
      (100, 0.0148171)
      (200, 0.0050970)
      (400, 0.0034820)
      (800, 0.0013023)
      (1600, 0.0008484)
      (3200, 0.0004879)
      (6400, 0.0002146)
    };
    \addlegendentry{$L^1$ error for the volume fraction}

    \addplot[
     domain=100:6400,
      color=gray,
      densely dotted,
    ] {0.928*(1/x)^0.949};
    \node[anchor=south west, font=\scriptsize, rotate=-27] at (axis cs:500,0.016) {order 0.949};

    \addplot[
      domain=100:6400,
      color=black,
     densely dotted,
    ] {6.0533*x^(-0.987)};
    \node[anchor=south west, font=\scriptsize, rotate=-25] at (axis cs:500,0.003) {order 0.987};

  \end{loglogaxis}
\end{tikzpicture}
\end{figure}

\section*{Acknowledgements}  
This work was partially supported by the Agence Nationale pour la Recherche grant ANR-23-CE400014-01 (Bourgeons project) and by the ANR under France 2030 bearing the reference ANR-23-EXMA-004 (Complexflows).
 
\printbibliography

\end{document}